\newtheorem{thm}{Theorem}[section]
\newtheorem{cor}[thm]{Corollary}
\newtheorem{lem}[thm]{Lemma}
\newtheorem{prop}[thm]{Proposition}
\theoremstyle{definition}
\newtheorem{defn}[thm]{Definition}
\theoremstyle{remark}
\newtheorem{rem}[thm]{Remark}
\theoremstyle{definition}
\newtheorem{ex}[thm]{Example}
\numberwithin{equation}{section}
\newcommand{\norm}[1]{\left\Vert#1\right\Vert}
\newcommand{\abs}[1]{\left\vert#1\right\vert}
\begin{document}

\title{Temperedness of measures defined by polynomial equations over local fields}%
\author{D. W. Taylor, V. S. Varadarajan, J. T. Virtanen, \\and D. E. Weisbart}%


\maketitle
\begin{abstract}
	We investigate the asymptotic growth of the canonical measures on the fibers of morphisms between vector spaces over local fields of arbitrary characteristic. For non-archimedean local fields we use a version of the {\L}ojasiewicz inequality (\cite{lojasiewicz1959}, \cite{hormander1958division}) which follows from Greenberg \cite{greenberg1966rational}, \cite{bollaerts1990estimate}, together with the theory of the Brauer group of local fields to construct definite forms of arbitrarily high degree, and to transfer questions at infinity to questions near the origin. We then use these to generalize results of H{\"o}rmander \cite{hormander1958division} on estimating the growth of polynomials at infinity in terms of the distance to their zero loci. Specifically, when a fiber corresponds to a non-critical value which is stable, i.e. remains non-critical under small perturbations, we show that the canonical measure on the fiber is tempered, which  generalizes results of Igusa and Raghavan \cite{igusa1978lectures}, and Virtanen and Weisbart \cite{virtanen2014elementary}.
\end{abstract}
\tableofcontents

\section{Introduction}
  Let $V$ be a finite dimensional vector space over $\mathbb{R}$ and $f\colon V \to \mathbb{R}$ a smooth non-constant function. In the physics and mathematics literature the measure denoted by $\delta (f-c)$ figures prominently; it is a measure living on the smooth part of the zero locus $Z(f-c)$ of $f-c$, $c\in \mathbb{R}$ \cite{gelfand1964}. Given $f$ and choices of Haar measures on $V$ and $\mathbb{R}$, $\delta(f-c)$ is uniquely defined for all $c$. Similarly if ${\bf f}=(f_1, f_2, \dots , f_r)\colon V\to \mathbb{R}^r$ is a  smooth map with $df_1\wedge\ldots\wedge df_r\neq 0$, for given Haar measures on $V$ and $\mathbb{R}^r$, there is a canonical measure on the smooth part of the common zero locus $Z({\bf f -c})=Z(f_1-c_1,f_2-c_2,\dots ,f_r-c_r)$ of the $f_i-c_i$ for all ${\bf c}=(c_1, c_2, \dots ,c_r)$. We denote this measure by $\mu_{{\bf f}, {\bf c}}$. In this context, the finiteness of $\mu_{{\bf f}, {\bf c}}$ around the singular points of $Z(f_1-c_1, f_2-c_2, \dots ,f_r-c_r)$, as well as the behavior at infinity of the extended measure, viewed as a Borel measure on $V$, are interesting questions. If the $f_i$ are polynomials and $Z({\bf f -c})$ is smooth, then it is natural to expect that $\mu_{{\bf f}, {\bf c}}$ is tempered. That is,
  
    	\begin{defn}[tempered measure]
    		Let $V$ be any finite dimensional $k$-vector space, $k$ a \emph{local field}. A Borel measure $\mu$ on $V$ is {\bf tempered} if  
    		\begin{displaymath}
    		\int_{V}(1+\norm{x}^2)^{-\alpha}d{\mu}(x)<\infty
    		\end{displaymath}
    		for some integer $\alpha$ (in any norm).
    	\end{defn}
  This is equivalent to saying that there are constants $A>0, b\ge 0$ such that 
  \[
  \mu(B_R)\le AR^b\qquad \label{G} \tag{G}
  \]
  for all $R\ge 1, B_R$ being the closed ball in $V$ of radius $R$ and center ${\bf 0}$ (in any norm).
 
  In \cite{igusa1978lectures} Igusa and Raghavan proved that if $k=\mathbb{R}$ and $f$ is a non-constant polynomial on $V$ and $c\in \mathbb{R}$ is a non-critical value of $f$, i.e., the locus $Z(f-c)$ is smooth, then $\mu_{f,c}$ is tempered, and further that the growth estimate (\ref{G}) for the measure is uniform in a neighborhood of $c$; here we must remember that by the algebraic Sard's theorem (proposition \ref{prop:2.4}), $f$ has only finitely many critical values, so that every non-critical value $c$ has neighborhoods consisting only of non-critical values. 
    
  The measures $\mu_{f,c}, \mu_{\bf {f,c}}$ can be defined over any local field.\footnote{{Throughout this paper by local field we mean a locally compact non-discrete field of any characteristic, other than the complexes; measure theoretic questions over the complexes usually reduce to the reals, and so we do not treat the case of complexes separately.}}
  In \cite{igusa1978lectures} Igusa and Raghavan define the measures $\mu_{f,c}$ for any local field but do not consider their behavior at infinity, the reason being that over a non-archimedean field they were concerned only with integrating Schwartz-Bruhat functions (i.e. compactly supported complex-valued locally constant functions). However the work of Harish-Chandra \cite{Har2} shows the necessity as well as utility of working with locally constant functions that do not vanish outside a compact set. The question of extending the results of \cite{igusa1978lectures} to the non-archimedean case and for $r>1$ is certainly a natural one. In \cite{virtanen2014elementary} the measures $\mu_{f,c}$ were shown to be tempered when $f$ is a non-degenerate quadratic form and $c \not=0$; moreover for the case $c=0$ the locus $Z(f)$ has $0$ as its only singularity, and it was shown that the measure $\mu_{f,0}$ is finite in the neighborhood of $0$ if $\dim V \geq 3$, and the extended measure is tempered in $V$. The work of \cite{virtanen2014elementary} was motivated by physical questions arising in the theory of elementary particles over $p$-adic spacetimes. In this paper we generalize the results of \cite{igusa1978lectures} and \cite{virtanen2014elementary} to the measures $\mu_{\bf {f,c}}$ where the $f_i(1\le i\le r)$ are polynomials on a vector space $V$ over a local field $k$, with $\dim (V)=m$ and $df_1\wedge df_2\wedge \dots \wedge df_r\not=0$, so that $m\ge r$. Note that for $r>1$ and $k=\mathbb{R}$ this question is already more general than the one treated in \cite{igusa1978lectures}.
 
  We now describe our main result using the above notation. Let ${\bf f}\colon V\to k^r$ be the polynomial map whose components are the $f_i$, with $df_1\wedge\cdots\wedge df_r \not\equiv 0$. A point $x\in V$ is called a critical point (CP) of ${\bf f}$ if the differentials $df_{i, x}$ are linearly dependent. We write $C({\bf f})$ for the set of critical points of ${\bf f}$; the image ${\bf f}(C({\bf f}))$ in $k^r$ is called the set of critical values of ${\bf f}$, and is denoted by $CV({\bf f})$. By the algebraic Sard's theorem (proposition \ref{prop:2.4}) one knows that {\it in characteristic zero} the Zariski closure in $k^r$ of $CV({\bf f})$ is a {\it proper} algebraic subset of $k^r$. A point $c\in k^r$ is called {\it stably non-critical}\label{stabNonCrit} if it has an open neighborhood (in the $k$-topology) consisting only of non-critical values. This is the same as saying that the fibers above points sufficiently close to $c$ are smooth. If $k$ has characteristic zero stably non-critical values exist and form a non-empty open set in $k^r$ whose complement in the image of ${\bf f}$ has measure $0$. Then the following is our main result. For $r=1$ and $k=\mathbb{R}$ it was proved in \cite{igusa1978lectures}. Note that in this case the characteristic is $0$ and there are only finitely many critical values and so every non-critical value is stably non-critical.
 
  \begin{thm}\label{thm:1.1}
  	{ Fix ${\bf f}$ and write $\mu _{\bf c}=\mu _{\bf f,c}$. Suppose ${\bf c}$ is stably non-critical. Then $\mu_{\bf c}$ is tempered and there are constants $A>0, \gamma\ge 0$ such that for all ${\bf d}$ in an open neighborhood of ${\bf c}$
  	$$
  	\mu _{\bf d}(B_R)\le A R^{m-r+\gamma}\qquad (R\ge 1, {\bf d}\in U).
  	$$
  	Suppose $k$ has characteristic $0$; then stably non-critical values form a non-empty dense open set whose complement in the image of $\mathrm{\bf f}$ has measure 0; for $r=1$, the critical set is finite and all non-critical values are stably non-critical.}
  \end{thm}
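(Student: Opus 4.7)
The plan is to derive the quantitative bound by pairing a coarea (Gelfand--Leray) decomposition of Haar measure on $V$ with a Łojasiewicz-type inequality that is uniform at infinity. I first handle the structural half. In characteristic zero, the algebraic Sard theorem (Proposition~\ref{prop:2.4}) places $CV({\bf f})$ inside a proper Zariski-closed subset $Z\subset k^r$. Since the $k$-topology refines the Zariski topology, $Z$ is closed in $k^r$, so every point of $k^r\setminus Z$ has a $k$-open neighborhood of non-critical values and is therefore stably non-critical by definition. Consequently the stably non-critical set contains the dense open $k^r\setminus Z$, and its complement in ${\bf f}(V)$ lies in $Z$, a proper algebraic subvariety of Haar measure $0$. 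When $r=1$, $Z$ is zero-dimensional, hence finite, and we recover the Igusa--Raghavan setting in which every non-critical value is automatically stably non-critical.

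For the quantitative estimate, fix an open neighborhood $U$ of ${\bf c}$ consisting of non-critical values, so that the Jacobian of ${\bf f}$ has full rank everywhere on ${\bf f}^{-1}(U)$ and each fiber over $U$ is smooth. The canonical measure is defined by the coarea identity
\begin{equation*}
\int_W \mu_{\bf d}(B_R)\,d{\bf d}\;=\;\operatorname{vol}\bigl(B_R\cap{\bf f}^{-1}(W)\bigr),\qquad W\subset U \text{ measurable}.
\end{equation*}
The implicit function theorem (valid over any local field) shows that ${\bf d}\mapsto \mu_{\bf d}(B_R)$ is continuous on $U$, so specializing $W=B_\epsilon({\bf d})$ and dividing by $\operatorname{vol}(B_\epsilon)$ yields
\begin{equation*}
\mu_{\bf d}(B_R)\;\le\;C\,\epsilon^{-r}\,\operatorname{vol}\bigl(B_R\cap{\bf f}^{-1}(B_\epsilon({\bf d}))\bigr),
\end{equation*}
uniformly for ${\bf d}$ in a slightly smaller neighborhood of ${\bf c}$.

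The hardest step is bounding this tube volume, and it is here that the paper's machinery is indispensable. The key input is a Łojasiewicz-at-infinity inequality
\begin{equation*}
\|{\bf f}(x)-{\bf d}\|\;\ge\;c_0\,(1+\|x\|)^{-N}\,d\bigl(x,{\bf f}^{-1}({\bf d})\bigr),
\end{equation*}
valid uniformly for ${\bf d}$ near ${\bf c}$. The linear power of $d(x,{\bf f}^{-1}({\bf d}))$ is exactly what the stably non-critical hypothesis buys: locally ${\bf f}$ is linearly transverse to its fibers, so only the behavior at infinity can degrade the constant, and it does so by at most a polynomial factor $(1+\|x\|)^{-N}$. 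Greenberg's rational-point estimates supply the non-archimedean Łojasiewicz inequality near any point, while definite forms of arbitrarily high degree (constructed via the Brauer group) compactify $V$ and transfer the estimate at infinity into a near-origin estimate in a larger space.

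Granting the inequality, any $x\in B_R$ with $\|{\bf f}(x)-{\bf d}\|<\epsilon$ satisfies $d(x,{\bf f}^{-1}({\bf d}))\le c_0^{-1}\epsilon R^N$, so the tube $B_R\cap{\bf f}^{-1}(B_\epsilon({\bf d}))$ sits inside a neighborhood of radius $\asymp \epsilon R^N$ around the fiber. Combined with a polynomial bound $\operatorname{vol}_{m-r}({\bf f}^{-1}({\bf d})\cap B_R')\le CR^{m-r}$ on the intrinsic volume of the algebraic fiber (itself accessible through the same machinery applied to generic linear slices), the tube has volume at most $C(\epsilon R^N)^r R^{m-r}$. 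Inserting this into the coarea estimate produces $\mu_{\bf d}(B_R)\le AR^{m-r+Nr}$, which is the desired bound with $\gamma=Nr$. Uniformity in ${\bf d}$ follows because all constants depend only on a fixed neighborhood of ${\bf c}$ in which the Łojasiewicz exponents and constants can be taken uniform.
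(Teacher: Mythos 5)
Your structural argument in characteristic $0$ (Sard implies $CV({\bf f})$ lies in a proper $k$-closed Zariski subset, hence the stably non-critical values contain a dense open set, and finiteness for $r=1$) is correct and matches the paper. The quantitative half, however, differs substantially from the paper's argument and has three genuine gaps that would need to be filled.

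First, the averaging step is not justified. From the coarea identity
$\int_W \mu_{\bf d'}(B_R)\,d{\bf d'}=\operatorname{vol}(B_R\cap {\bf f}^{-1}(W))$, dividing by $\operatorname{vol}(B_\epsilon)$ bounds the \emph{average} of $\mu_{\bf d'}(B_R)$ over the $\epsilon$-ball, not the value $\mu_{\bf d}(B_R)$ at the center. To conclude $\mu_{\bf d}(B_R)\le C\epsilon^{-r}\operatorname{vol}(\cdots)$ with $\epsilon$ and $C$ independent of $R$, you need a Harnack-type comparison $\mu_{\bf d}(B_R)\le C\,\mu_{\bf d'}(B_R)$ uniformly for ${\bf d'}\in B_\epsilon({\bf d})$ and $R\ge 1$; the implicit function theorem gives continuity of ${\bf d}\mapsto\mu_{\bf d}(B_R)$ for each fixed $R$, but says nothing about uniformity as $R\to\infty$. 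Second, your Łojasiewicz-at-infinity inequality $\|{\bf f}(x)-{\bf d}\|\ge c_0(1+\|x\|)^{-N}\,d(x,{\bf f}^{-1}({\bf d}))$, with exponent exactly $1$ on the distance, is asserted but not proved; the H1/H2 inequalities in the paper (derived from Greenberg and the division-algebra inversion) produce a distance exponent $\alpha$ that is in general strictly greater than $1$, and they are applied to the generalized gradient $\nabla_r{\bf f}$, not to ${\bf f}-{\bf d}$. You would need a separate argument that the smoothness of the fiber improves the local exponent to $1$ and that the degradation of the constant at infinity is polynomial. Third, the volume bound $\mathcal{H}^{m-r}({\bf f}^{-1}({\bf d})\cap B_R)\le CR^{m-r}$ (or the tube-volume estimate it feeds) is essentially the content of the theorem itself (they differ only by the weight $\|\nabla_r{\bf f}\|^{\pm 1}$); asserting it as "accessible through the same machinery" leaves the argument circular.

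The paper avoids all three issues by working directly on the level set: it decomposes $L_{\bf c}$ into pieces $L_{{\bf c},J}$ where the minor $\partial_J$ is dominant, writes $d\mu_{\bf c}=|\partial_J|^{-1}\,d\nu$ with $\nu$ pushed to Haar measure on $k^{m-r}$ by $\pi_J$, bounds $|\partial_J|^{-1}$ by the lower estimate $\|\nabla_r{\bf f}(x)\|\ge C\|x\|^{-\gamma}$ (which is where H1/H2 and the definite forms actually enter), and controls the multiplicity of $\pi_J$ by the refined Bezout bound via Krasner's lemma. That route needs no continuity-in-${\bf d}$ statement nor a separate Hausdorff-volume bound for the fiber, so it is both more elementary and more robust. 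If you want to pursue the coarea/tube approach, you would need to prove the uniform Harnack comparison and the Minkowski-content bound for algebraic fibers over local fields, each of which is a substantial lemma in its own right.
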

  \begin{rem}
  	In view of the failure of Sard's theorem over characteristic $p>0$ (see section \S 2.3), we do not know if stably non-critical values of ${\bf c}$ always exist when $k$ is a local field of positive characteristic.
  \end{rem}
  \begin{rem}
  	The results and ideas in the paper lie at the interface of analysis of geometry over local fields and are motivated by the themes from quantum theory over $p$-adic spacetimes. We do not know what, if any, are the arithmetic consequences of our results.
  \end{rem}
  As an application of our theory we prove that if $k$ has characteristic $0$, the orbits of regular semi-simple elements of a semi-simple Lie algebra over $k$ are closed, and the invariant measures on them are tempered, at least when the invariants of the adjoint group are \emph{freely generated} over $k$; this property can always be ensured by passing to a finite extension of $k$. For $k=\mathbb{R}$ this is a special case of a result of Harish-Chandra in \cite{Har1}, who proved it without any condition on the structure of the ring of invariants over $\mathbb{R}$.
  
\section{Canonical measures on level sets of polynomial maps}

\subsection{Canonical measures on the fibers of submersive maps}
 The construction below is well-known and our treatment is a very mild variant of Harish-Chandra's \cite{Har3} for the case $k=\mathbb{R}$ (see also \cite{Var1}). Serre's book \cite{serre2009lie} is a good reference for the theory of analytic manifolds and maps over a local field of arbitrary characteristic. (All of our manifolds are second countable.)
 
 \begin{lem}\label{thm:2.2}
 	{Let $V, W$ be vector spaces of finite dimension $m, r$ respectively, and $L\colon V\to W$ be a surjective linear map. Let $U={\ker L}$. Let $\sigma,\ \tau$ be exterior forms on $V, W$ of degrees $m, r$ respectively, with $\tau\not=0$. Then there exists a unique exterior $(m-r)$-form $\rho$ on $U$ such that if $\{u_1,u_2, \dots , u_{m-r}\}$ is a basis for $U$, then
 		$$
 		\rho(u_1, u_2, \dots ,u_{m-r})={\sigma (u_1, \dots ,u_{m-r}, v_1,\dots ,v_r)\over \tau (Lv_1,\dots ,Lv_r)}
 		$$
 		where $v_i \in V$ are such that $\{u_1, \dots ,u_{m-r}, v_1,\dots ,v_r\}$ is a basis for $V$.}
 \end{lem}

 \begin{proof}
 	For fixed $v_i$ it is obvious that this defines an exterior $(m-r)$-form on $U$. Its independence of the choice of the $v_i$ is easy to check.
 \end{proof}
 
 We write $\rho=\sigma /\tau$. Note that this definition is relative to $L$.
 
 \begin{thm}\label{lem:2.2}
 	{Let $k$ be a local field of arbitrary characteristic and $M, N$ be analytic manifolds over $k$ of dimensions $m, r$ respectively, and $\pi \colon M\to N$ be an analytic map, surjective, and submersive everywhere. Let $\sigma_M$ (resp.$\tau_N$) be an analytic exterior $m$-form (resp. $r$-form) on $M$ (resp. $N$), with $\tau_N\not=0$ everywhere on $N$. Then there is a unique analytic exterior form $\rho\colonequals\rho_{M/N}$ on $M$ such that for any $y\in N$, the pull back of $\rho$ to the fiber $\pi ^{-1}(y)$ is the exterior $(m-r)$-form $x\mapsto \sigma_x/\tau_y$ relative to $d\pi_x\colon T_x(M)\to T_y(N)$.}
 \end{thm}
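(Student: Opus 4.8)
The plan is to reduce the global statement to the pointwise Lemma~\ref{thm:2.2} by a local computation in coordinates adapted to the submersion. \emph{Uniqueness} is immediate: if such a $\rho$ exists, then for every $x \in M$, writing $y = \pi(x)$, the tangent space to the fiber $\pi^{-1}(y)$ at $x$ is exactly $\ker d\pi_x$, so the value $\rho_x$ is forced to be the $(m-r)$-form on $\ker d\pi_x$ assigned by Lemma~\ref{thm:2.2} to the data $L = d\pi_x$, $\sigma = \sigma_x$, $\tau = \tau_y$. This already pins down $\rho$ as a relative $(m-r)$-form along the fibers; the whole content of the theorem is therefore that the resulting assignment $x \mapsto \rho_x$ is analytic, and this is a local question on $M$.

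\emph{Existence (analyticity).} Around any point $x_0 \in M$ I would apply the local normal form for analytic submersions over a local field (Serre~\cite{serre2009lie}): there are analytic charts $(t_1, \dots, t_r, s_1, \dots, s_{m-r})$ on a neighborhood $M_0 \ni x_0$ and $(u_1, \dots, u_r)$ on a neighborhood $N_0 \supseteq \pi(M_0)$ in which $\pi$ is the coordinate projection $(t,s) \mapsto t$. Then the fibers meeting $M_0$ are the slices $\{t = \text{const}\}$ with coordinates $s$, and $\ker d\pi$ is spanned by $\partial/\partial s_1, \dots, \partial/\partial s_{m-r}$. Write $\sigma_M = g\, dt_1 \wedge \cdots \wedge dt_r \wedge ds_1 \wedge \cdots \wedge ds_{m-r}$ with $g$ analytic on $M_0$, and $\tau_N = h\, du_1 \wedge \cdots \wedge du_r$ with $h$ analytic and nowhere zero on $N_0$. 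Evaluating the formula of Lemma~\ref{thm:2.2} with $u_i = \partial/\partial s_i$ and $v_j = \partial/\partial t_j$ (so that $d\pi\, v_j = \partial/\partial u_j$) gives, up to a fixed sign $(-1)^{r(m-r)}$ coming only from reordering,
$$\rho_x = \frac{g(t,s)}{h(t)}\, ds_1 \wedge \cdots \wedge ds_{m-r} \qquad \text{along the fiber through } x = (t,s).$$
Since $g/(h \circ \pi)$ is analytic on $M_0$, this exhibits $\rho$ as analytic near $x_0$. As the charts were arbitrary and the pointwise recipe of Lemma~\ref{thm:2.2} is independent of all auxiliary choices, the locally defined analytic forms coincide on overlaps and patch together to a global analytic $\rho$ with the stated restriction to each $\pi^{-1}(y)$.

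The one point that warrants care is the appeal to the local normal form in positive characteristic: one needs that an analytic map whose differential is everywhere surjective is locally analytically a coordinate projection, over any local field. This is a consequence of the inverse/implicit function theorem for analytic maps over complete valued fields (Serre~\cite{serre2009lie}), which requires only the convergence of the usual successive-approximation scheme and no division by the characteristic, so it is available uniformly in all characteristics. Granting that, the remaining ingredients — the independence of the expression in Lemma~\ref{thm:2.2} from the vectors $v_i$, the compatibility of the local formulas, and the sign bookkeeping — are routine, so this step is the only real obstacle.
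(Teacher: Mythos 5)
Your argument — pointwise determination from Lemma~\ref{thm:2.2}, then analyticity checked in local coordinates adapted to the submersion — is exactly the paper's proof, just spelled out a bit more carefully (including the justified appeal to the implicit function theorem over complete valued fields, which the paper leaves implicit via the reference to Serre). Correct and same approach.
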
 
 \bigskip\noindent
\begin{proof}
 	The pointwise definition of $\rho$ is clear after the preceding lemma. For analyticity we use local coordinates around $x$ and $y=\pi(x)$, say $x_1, \dots x_m$ such that $\pi$ is the projection $(x_1, \dots ,x_m)\to (x_1,\dots ,x_r)$. Then $$\sigma_M=s(x_1, \dots ,x_m)dx_1 \dots dx_m, \tau=t(x_1, \dots ,x_r) dx_1\dots dx_r,$$  and $$\rho=\big ( s(x_1, \dots ,x_m)/t(x_1, \dots ,x_r) \big) dx_{n+1}\dots dx_{m}.$$
\end{proof}

\begin{rem}
	Let $s_M$ (resp. $t_N$) be the measures defined on $M$ (resp. $N$) by $|\sigma_M|$ (resp. $|\tau_N|$). We denote by $r_{M/N,\,y}$ the measures defined on $\pi^{-1}(y)$ by $\abs{\rho}$. The smooth functions in the non-archimedean case are the locally constant functions. Then, we have \cite{Har3}
	$$
	\int _M\alpha \ ds_M=\int_N f_\alpha \ dt_N,\quad f_\alpha (y)=\int _{\pi^{-1}(y)}\alpha\  dr_{M/N,\, y}
	$$
	for all smooth compactly supported complex-valued functions $\alpha$ on $M$. 
\end{rem}

It is easy to show, using partitions of unity that the map $\alpha \mapsto f_\alpha$ is surjective, and continuous when $k=\mathbb{R}$. This gives rise to an injection of the space of distributions on $N$ into the space of distributions on$M$, say $T\mapsto T^*$. Then $r_{M/N,y}= \delta(y)^*$, $\delta(y)$ being the Dirac distribution at $y\in N$. Replacing $\delta(y)$ by its derivatives, we get distributions on $M$, supported by $\pi^{-1}(y)$. If $F$ is a locally integrable function on $N$, it defines a distribution on $N$, say $T_F$, and $T_F^*$ is $T_{F\circ \pi}$ where $F\circ \pi$ is a locally integrable function on $M$. Thus the map $T\mapsto T^*$ is the natural extension of the map $F\mapsto F\circ \pi$ from the space of locally integrable functions on $N$ to the corresponding space on $M$. The map $T\mapsto T^*$ plays a fundamental role in Harish-Chandra's theory of characters on real semi-simple Lie groups (\cite{Har3}).
Finally, in algebro-geometric terminology, $\rho$ above is the top \emph{relative} exterior form.
 
We shall now apply this result to polynomial maps ${\bf f} \colon V\to k^r$ where $V$ is a vector space of finite dimension $m$ over a local field $k$ of arbitrary characteristic such that $df_1\wedge \dots df_r\not\equiv 0$ on $V$, the $f_i$ being the components of ${\bf f}$; let $V^\times$ be the set of points where this exterior form is non-zero in $V$, so that $V^\times $ is non-empty Zariski open in $V$; let $N({\bf f})={\bf f}(V^\times)$. Clearly $m\ge r$ and $N({\bf f})$ is non-empty open (in the $k$-topology) in $k^r$. Then, by theorem \ref{lem:2.2} with $M=V^\times, N=N({\bf f})$, we have a measure $\mu_{\bf c}$ for ${\bf c}\in N$ on $L_{\bf c}^\prime \colonequals L_{\bf c}\cap V^\times$ where $L_{\bf c}$ is the level set
 \begin{equation}
   L_{\bf c}= Z(f_1-c_1, \dots ,f_r-c_r)=\{x\in V | f_1 (x)=c_1, \dots ,f_r(x)=c_r\}. \label{eqn:Lc}
 \end{equation}
 
 Exactly as before, we may view the $\mu_{\bf f, c}$ as distributions living on $L_{\bf c}'$ which is all of $L_{\bf c}$ if ${\bf c}$ is a non-critical value. The derivatives of $\mu_{\bf f, c}$ with respect to the differential operators of $k^m$ (when $k=\mathbb{R}$) then yield distributions supported by $L_{\bf c}$. Examples of such distributions have important applications (\cite{gelfand1964},\cite{Kolk}) in analysis and physics

 Fix a non-critical value ${\bf c}$ of ${\bf f}$. Let $J=\{i_1<i_2, \dots , < i_r\}$ be an ordered subset of $r$ elements in $\{1,2, \dots ,m\}$. Let
 
 \begin{equation}
   \partial _J \colonequals{\partial (f_1,\dots ,f_r)\over \partial (x_{i_1}, \dots , x_{i_r})}. \label{eqn:partialJ}
 \end{equation}
 
 Then $L_{\bf c}$ is smooth and $L_{\bf c}=\bigcup _J L_{{\bf c}, J}$
 where the sum is over all sets $J$ as above and 
 \begin{equation}
   L_{{\bf c}, J}\colonequals\{x\in L_{\bf c} | \partial _J(x)\not=0\}.\label{eqn:LcJ}
 \end{equation}
 Locally on $L_{{\bf c}, J}$, $(f_1,\dots ,f_r, y_{1},\dots y_{m-r})$ is a new coordinate system, the $y_j$ being some enumeration of the $x_i (i\not=i_\nu)$. Obviously $dy_1\dots dy_m=\varepsilon \partial _J(x) dx_1\dots dx_m,$
 where $\varepsilon$ is locally constant and equal to $\pm 1$. Another way of interpreting this formula is the following: if $\pi_J$ is the projection map from $L_{{\bf c}, J}$ that takes $x$ to $(y_{1}, \dots y_{m-r})$, then $\pi_J$ is a local analytic isomorphism and
 \begin{equation}
   \rho_{\bf c}=\varepsilon {1\over \partial_J(x)} \pi_J^\ast (dy_{1}\dots dy_{m-r}) \label{eqn:measure}
 \end{equation}
 where $\varepsilon$ is locally constant and $\pm 1$-valued. Hence to control the growth of the measure defined by $\abs{\rho}$ at infinity, we must find {\it lower bounds} of the $\norm{\partial _J(x)}$ on $L_{{\bf c}, J}$
 for $||x||\ge 1$. Let
 $$
 \nabla _r(x)= \big (\partial_J(x)).
 $$
 We call $\nabla_r$ the generalized gradient of $ (f_1,\ldots, f_r).$ Then we must find lower bounds for $\norm{\nabla_r(x)}\colonequals \max_{J}\norm{\partial _J(x)}$ for $||x||\ge 1$ on $L_{{\bf c}, J}$. In this quest we follow \cite{igusa1978lectures}, and our techniques force us to assume ${\bf c}$ to be stably non-critical. For $r=1$, $\nabla_1$ is just the gradient $\nabla$, and \cite{igusa1978lectures} reduces the issue of the lower bounds for the gradient field by replacing $\nabla f$ (for $k=\mathbb{R}$) by $\sum _{1\le j\le m}|\partial_jf|^2$, where $\partial_jf=\partial f/\partial x_j$. For non-archimedean local fields and for $r>1$ we have to replace the sum of squares by suitable {\it definite} forms whose degrees will grow with $m$. Igusa and Raghavan find lower bounds for $\abs{\nabla}$ using H{\"o}rmander's inequalities \cite{hormander1958division} over $\mathbb{R}$. We generalize H{\"o}rmander's inequalities to any local field and use them with the existence of definite forms of sufficiently high degree to get lower bounds for $\norm{\nabla_r}$ on the level sets $L_{{\bf c}, J}.$ 
 
 The H{\"o}rmander inequalities over $\mathbb{R}$ are of two types: H1 and H2. H1 is local and is essentially the {\L}ojasiewicz inequality \cite{lojasiewicz1959}; H{\"o}rmander derives H2 from H1 by inversion. Over non-archimedean $k$, H1 turns out to be a consequence of a Henselization lemma of Greenberg \cite{greenberg1966rational}, as observed in \cite{bollaerts1990estimate}. The reduction of H2 to H1 is more subtle in the non-archimedean case.
 We prove it by embedding $V$ in a division algebra $D$, central over $k$, prove H2 for $D$, and then deduce $H2$ for $V$. The descent from $D$ to $V$ is elementary. To prove H2 in $D$ we use the map $x\mapsto x^{-1}$ on $D\setminus\{0\}$ to reduce H2 to H1. The existence of central division algebras over $k$ of arbitrarily high dimension is non-trivial and follows from the theory of the Brauer group of $k$. The lower bounds of $\nabla_r f$ obtained from these arguments allow us to prove that when ${\bf c}$ is a \emph{stably non-critical} value of ${\bf f}$, $\mu_{{\bf f},{\bf c}}(B_r)=O(R^{m-r+\gamma})$ for some $\gamma\geq 0$, uniformly near ${\bf c}.$ We do not know if we can take $\gamma =0$ always. If $\norm{\nabla_r {\bf f}}$ is bounded away from zero at infinity on $L_{\bf c}$, then it is obvious that we may take $\gamma = 0$; but $\inf \norm{\nabla_r{\bf f}}$ may be zero on $L_{\bf c}$. (See section 7.2)
\subsection{Algebraic Sard's theorem in characteristic $0$ for polynomial maps}\label{sec:2.2}
Let $V$ be a vector space over $k$ of finite dimension $m$. Recall the definitions of $C(\mathbf{f})$ and $CV(\mathbf{f})$.

\begin{prop}\label{prop:2.4}
	Let $k$ be of characteristic 0. The Zariski closure, $Cl(CV(\mathbf{f}) )$ is a proper subset of $k^r$; in particular, if $r=1$, then $CV(\mathbf{f})$ is finite.
\end{prop}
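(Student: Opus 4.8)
The plan is to derive this from the algebraic Sard theorem in the guise of \emph{generic smoothness} in characteristic $0$, after first reducing to the case $k=\bar k$.

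I would first reduce to an algebraically closed base. The critical set $C(\mathbf f)$ is the Zariski-closed subset of $V\cong\mathbb A^m$ cut out by the $r\times r$ minors of the Jacobian $\big(\partial f_i/\partial x_j\big)$; these minors lie in $k[x_1,\dots,x_m]$, so $C(\mathbf f)$, the map $\mathbf f$, and hence the Zariski closure over $\bar k$ of $\mathbf f(C(\mathbf f))$, are all defined over $k$. Thus it suffices to show that this closure is a proper closed subset of $\mathbb A^r_{\bar k}$: its vanishing ideal is $\operatorname{Gal}(\bar k/k)$-stable, hence generated by polynomials over $k$, and a nonzero such polynomial vanishes on $CV(\mathbf f)\subseteq k^r$, which forces $Cl(CV(\mathbf f))\subsetneq k^r$ and, for $r=1$, forces $CV(\mathbf f)$ to be finite. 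So from now on assume $k$ algebraically closed.

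For the main step, note that $df_1\wedge\cdots\wedge df_r\not\equiv0$ gives a point at which $d\mathbf f$ has rank $r$, so $\mathbf f\colon\mathbb A^m\to\mathbb A^r$ is submersive there and hence dominant. I would then invoke generic smoothness (valid precisely because $\operatorname{char}k=0$): there is a nonempty Zariski-open $U\subseteq\mathbb A^r$ with $\mathbf f\colon\mathbf f^{-1}(U)\to U$ smooth. Since $\mathbb A^m$ is smooth, a smooth morphism to $U$ has surjective differential everywhere on $\mathbf f^{-1}(U)$ (the conormal sequence $0\to\mathbf f^{*}\Omega_{\mathbb A^r}\to\Omega_{\mathbb A^m}\to\Omega_{\mathbb A^m/\mathbb A^r}\to0$ is locally split over $U$), so $\mathbf f^{-1}(U)\cap C(\mathbf f)=\varnothing$. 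Hence $CV(\mathbf f)=\mathbf f(C(\mathbf f))\subseteq\mathbb A^r\setminus U$, a proper closed set, so $Cl(CV(\mathbf f))\subseteq\mathbb A^r\setminus U\subsetneq\mathbb A^r$; and for $r=1$, $\mathbb A^1\setminus U$ is finite. Together with the descent step this proves the proposition.

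The only non-formal ingredient, and the main obstacle, is the appeal to generic smoothness, and this is exactly where $\operatorname{char}k=0$ cannot be dropped: in characteristic $p$ a dominant morphism may be inseparable and nowhere smooth, so $CV(\mathbf f)$ can be Zariski-dense --- the failure recorded in \S2.3. If one wishes to avoid citing generic smoothness, it can be established directly in this situation: the generic fibre of $\mathbf f$ is $\operatorname{Spec}$ of a localization of $k[x_1,\dots,x_m]$, hence regular, and it is a finite-type algebra over the perfect field $L=k(f_1,\dots,f_r)$, hence smooth over $L$; spreading this smoothness out over a dense open of $\mathbb A^r$ produces the set $U$. The remaining points --- the identification of $C(\mathbf f)$ with the minor locus, the differential computation, and the Galois descent --- are routine.
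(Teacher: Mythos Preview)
Your argument is correct and follows essentially the same approach as the paper: both reduce to an algebraically closed field of characteristic $0$ and then invoke generic smoothness (the algebraic Sard theorem) to conclude that the critical values lie in a proper closed subset. The only cosmetic difference is in the reduction step---the paper embeds the finitely generated field $k_1\subset k$ generated by the coefficients of the $f_j$ into $\mathbb{C}$ and cites Mumford's treatment there, whereas you pass to $\bar k$ and descend; your write-up is more detailed, but the substance is the same.
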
 

\begin{proof}
	Fix a basis of $V$ so that $V\simeq k^m$. The field generated by the coefficients of the $f_j$, say $k_1\supset k$, can be embedded in $\mathbb{C}$. It is thus enough to prove lemma \ref{prop:2.4} over $\mathbb{C}$ itself, where it is just the statement that the fibers of ${\bf f}$ are generically smooth.  
	Over $\mathbb{C}$ this is essentially Sard's lemma for affine algebraic varieties treated by Mumford \cite{mumford1995algebraic} . 	
\end{proof}
\subsection{Analytic Sard's theorem in characteristic $p>0$}
In characteristic $p>0$, the algebraic Sard's lemma fails abysmally \cite{MO2015} (p.179) over algebraically closed fields. Indeed, let $f$ be a polynomial in two variables $X, Y$ giving rise to a map $K^2\longrightarrow K$ where $K$ is algebraically closed and of characteristic $p>0$, for example,
$$
f=X^{p+1}+X^pY+Y^p.
$$ 
Then the gradient of $f$ vanishes precisely on the $Y$-axis, and $f$ on the $Y$-axis is the map $y\mapsto y^p$ which is surjective. So the image of the singular set is all of $K$, and every fiber has a singular point. But if we replace $K$ by a local field, then $y\mapsto y^p$ is {\it not} surjective, and in fact the image under $f$ of the singular set is $k^p$ which  is a  closed proper subset of $k$ (in the $k$-topology), and is of measure zero in $k$. Thus the generic fiber (in the $k$-topology) is smooth in $k$.

We shall now consider the situation over local fields of characteristic $p>0$.
\begin{thm}\label{thm:2.5}
	Let $X,Y$ be analytic manifolds over a local field $k$ of characteristic $p>0$, of dimensions $m,r$ respectively. Let $f\colon X\to Y$ be an analytic. 
	Let $C$ be the critical set for $f$. Then $f(C)$ has measure zero in $Y$ if $p>m-n+1$.
\end{thm}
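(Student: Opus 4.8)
The plan is to run the classical stratified Sard argument (the same decomposition used for smooth maps in the work of Milnor and Federer), checking that each step survives over a local field of characteristic $p$ and isolating the one place where the hypothesis $p>m-r+1$ is used. By second countability and the fact that ``Haar-measure zero'' is preserved by analytic diffeomorphisms (transition maps are analytic, hence locally bi-Lipschitz; see \cite{serre2009lie}), I would first reduce to a local statement: it suffices to treat an analytic $f\colon U\to k^r$ with $U\subseteq k^m$ open and to show $f(C\cap K)$ has measure zero for each compact polydisc $K\subseteq U$. Introduce the filtration $C=C_0\supseteq C_1\supseteq\cdots\supseteq C_N$ with $N=m-r+1$ and $C_j=\{x\in U:\partial^\alpha f_i(x)=0\text{ for all }i\text{ and all }1\le|\alpha|\le j\}$ (formal partial derivatives of the local power series), and decompose $C=(C\setminus C_1)\cup\bigcup_{j=1}^{N-1}(C_j\setminus C_{j+1})\cup C_N$. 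The three types of pieces are handled separately, by induction on $m$; the recursion only ever lowers $m-r$, so the hypothesis is preserved down the induction.

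The pieces $C\setminus C_1$ and $C_j\setminus C_{j+1}$ ($1\le j\le N-1$) are handled exactly as in characteristic zero. For $C\setminus C_1$: at a point where $df$ has rank $\ge 1$ the implicit function theorem over $k$ (\cite{serre2009lie}) lets one take some $f_i$ as a coordinate on the source, so near that point $f$ becomes, in suitable analytic coordinates, $(x_1,\hat f_2,\dots,\hat f_r)$; slicing by $x_1$, applying Fubini on $k^r=k\times k^{r-1}$ and the inductive hypothesis in dimension $m-1$ gives measure zero for that piece (this is vacuous when $r=1$). For $C_j\setminus C_{j+1}$: at such an $x$ some $(j{+}1)$-st partial of some $f_i$ is nonzero, say it equals $\partial_l w$ with $w=\partial^\beta f_i$, $|\beta|=j$; then $w(x)=0$ since $x\in C_j$, while $dw_x\ne 0$, so $H=w^{-1}(0)$ is an $(m-1)$-dimensional analytic submanifold near $x$ containing $C_j$; since $j\ge 1$ forces $df\equiv 0$ on $C_j$, we get $C_j\cap H\subseteq \mathrm{Crit}(f|_H)$, and the inductive hypothesis applied to $f|_H$ (dimension $m-1$) finishes. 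Nothing here uses anything special about $p$.

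The hypothesis $p>m-r+1$ enters only in the deepest stratum $C_N$, $N=m-r+1$. Because $N\le p-1$, every multi-index $\alpha$ with $1\le|\alpha|\le N$ has $\alpha!\in k^\times$, so the degree-$|\alpha|$ Taylor coefficient of $f$ at a point equals $\tfrac{1}{\alpha!}\partial^\alpha f$; hence on $C_N$ all Taylor coefficients of degrees $1,\dots,N$ vanish, and by uniform convergence on a slightly larger polydisc together with compactness of $K$ there is a constant $c$ with $\norm{f(x+h)-f(x)}\le c\norm{h}^{N+1}$ for $x\in C_N\cap K$ and $\norm{h}$ small. Thus a ball of radius $\delta$ about such an $x$ maps into a ball of radius $\le c\delta^{N+1}$; covering $C_N\cap K$ by $O(\delta^{-m})$ balls of radius $\delta$ covers $f(C_N\cap K)$ by $O(\delta^{-m})$ balls of radius $c\delta^{N+1}$, of total Haar measure $O\!\left(\delta^{\,r(N+1)-m}\right)$, and since $r(N+1)-m=r(m-r+2)-m>0$ for $m\ge r\ge 1$, letting $\delta\to 0$ gives measure zero. (Note $N=m-r+1$ is a safe overestimate of the depth actually needed for the exponent to be positive; it is chosen to make the hypothesis clean.)

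The main obstacle, and the real content of the positive-characteristic restriction, is precisely this last step. If $N\ge p$, then vanishing of the ordinary partials $\partial^\alpha f$ of orders $\le N$ no longer forces the Taylor (Hasse) coefficients of those degrees to vanish; the prototype is $x\mapsto x^p$, with all ordinary derivatives identically zero yet not vanishing to second order, and then the Taylor estimate, hence the whole dimension count, collapses. This is exactly the mechanism behind the failure of Sard recalled just before the theorem. Over a local field one might still hope to recover $C_N$ for small $p$ by exploiting the fact that $p^e$-th power maps have image of Haar measure zero, but that is a more delicate matter and is avoided here by imposing $p>m-r+1$. I would also take care, citing \cite{serre2009lie}, to confirm that the inverse/implicit function theorem and the submanifold and slicing constructions used above are all available for analytic maps over local fields of characteristic $p$, and that ``measure zero'' is a chart-independent notion in that setting.
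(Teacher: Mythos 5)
Your argument is correct and follows essentially the same route as the paper: the same stratified Sard filtration by vanishing orders of the ordinary partial derivatives, the same dimensional inductions on $C\setminus C_1$ (reducing to $(m-1,r-1)$) and on $C_j\setminus C_{j+1}$ (reducing to $(m-1,r)$), and the same key arithmetic observation that $\alpha!$ is a unit for $|\alpha|\le p-1$, which converts vanishing of the ordinary partials into vanishing of the Hasse--Taylor coefficients $\Delta^\alpha f$ and hence into the remainder estimate driving the covering count on the deepest stratum. The only inessential variation is that you stop the filtration at $C_{m-r+1}$ and use the exponent $r(m-r+2)-m>0$, whereas the paper descends to $C_{p-1}$, proves the stronger bound $\|f(x+h)-f(x)\|\le A\|h\|^{p}$, and finishes via $pr-m>0$ using $m/r\le m-r+1$; under the hypothesis $p>m-r+1$ these are equivalent bookkeeping choices.
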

	
\begin{proof} 
	The proof that $f(C)$ has measure zero in $Y$ when $p>m-r+1$ is a minor adaptation of \cite{guillemin1974difierential}\footnote{From Sard \cite{Sard42} we know that when $k=\mathbb{R}$ and the map is of class $C^{(a)}$ $(a>0)$, $f(C)$ has measure zero when $a>m-r$. Now, when $k$ has characteristic $p>0$, the derivatives of $f$ are not enough to determine the coefficients of the power series expansion of $f$ whose order is greater than $p-1$. So there is an analogy with the case of $C^{(p-1)}$ over the reals, suggesting that over $k$ the condition $p>m-r+1$ would be sufficient to guarantee that $f(C)$ is a null set. This suggestion, which leads to theorem \ref{thm:2.5}, is due to Professor Pierre Deligne, which we gratefully acknowledge.}, needed because  we have an additional restriction on $p$.
	
	The result is local and so we may take $X$ to be a compact open set $U\in k^m$. We use induction on $m$. We define the filtration $C=C_0\supset C_1\supset \ldots\supset C_{p-1}$, where $C_s \ (1\leq s \leq p-1) $ is the set where all derivatives of the components of $f$ of order $\le s$ vanish. The sets $C,C_s$ are compact while $C_s\setminus C_{s+1}$ is locally compact and second countable, hence a countable union of compact sets. So $f(C), f(C_s)$ are compact, and $f(C_s\setminus C_{s+1})$ is a countable union of compact sets.
	
	The inductive proof that $f(C\setminus C_1)$ is a null set reduces to the case when $(m,r)$ becomes $(m-1,r-1)$. Since $m-r=(m-1)-(r-1)$, the condition on $p$ remains the same and induction applies.
	
	The inductive proof that $f(C_s\setminus C_{s+1})$ is a null set reduces to the case when $(m,r)$ becomes $(m-1,r)$. Since $p>m-r+1>(m-1)-r+1$, induction applies again.
	
	It remains to show that $f(C_{p-1})$ is a null set when $p>m-r+1$. We shall show actually that $f(C_{p-1})$ is a null set when $p>m/r$. This is enough since $m/r \le m-r+1$. This is a local result and so we may work around a point of $C_{p-1}$ which can be taken to be the origin. We use the max norm on $k^m$ and $k^r$ so that the norms take values in $q^\mathbb{Z}$ where $q>1$ is the cardinality of the residue field of $k$. By scaling, if necessary, we may assume that all components of $f$ are given by power series expansions, absolutely convergent on the ball $B(q)\colonequals\left\{ x\in k^m\, |\, \norm{x} \le q\right\}$. Note that $B(1)=R^m$ where $R$ is the ring of integers of $k$. In order to estimate the growth of these series we need a lemma.
		\begin{lem} 
			{Let $g$ be an analytic function on $B(q)$ given by an absolutely convergent power series expansion about $0$ on $B(q)$.  Let $D$ be the set in $B(1)$ where $\partial^{\beta} f=0$ for all $\beta$ with $\abs{\beta}\le p-1$. Then we have 
				$$
				\abs{g(x+h) -g(x)} \le A\norm{h}^{p}
				$$
				uniformly for $x\in D, \norm{h} \le 1 \le q-1$, the constant $A >0$ depending only on $g$.}
		\end{lem} 
			
			\begin{proof} 
				We use \cite{serre2009lie}, pp. 67-75. We have
				$$
				  g(x)=\sum _\alpha c_\alpha X^\alpha,\qquad \sum_\alpha |c_\alpha|=A<\infty.
				$$
				For $x\in B(1)$ we have $g(x+h)=\sum _\beta\Delta^\beta g(x)h^\beta$ where
				\begin{displaymath}
					\Delta^{\beta}g(x) =\sum_{\alpha \ge \beta} c_\alpha\binom{\alpha}{\beta}x^{\alpha-\beta}, \qquad \beta ! \Delta^\beta g(x)=\partial^\beta g(x).
				\end{displaymath}
				Then $|\Delta^\beta g(x)|\le A$ on $B(1)$. If $x\in D$, $\norm{h}\le 1\le q-1$ then $x+h\in B(1)$. Moreover, for $\abs{\beta}\le p-1$, $\beta! \Delta^\beta g(x)=0$ so that $\Delta^\beta g(x)=0$.
				Hence,
				\begin{displaymath}
					g(x+h)= g(x)+ \sum_{\abs{\beta} \ge p }(\Delta^\beta g )(x) h^\beta
				\end{displaymath}
				But, for $y\in B(1)$,
				\begin{displaymath}
					\abs{\Delta^{\beta} g(y)} \le \sum\abs{c_\alpha} = A.
				\end{displaymath} 
				So, 
				\begin{displaymath}
					\abs{g(x+h)-g(x)} \le A\norm{h}^p \qquad (x\in D, \norm{h} \le 1),
				\end{displaymath}
				proving the lemma.
			\end{proof}
			
			We now divide $B(1)^m$ into very small ``cells''. Let $P$ be the maximal ideal in $R$. Let $N$ be any integer $\ge 1$. Then $B(1)$ is the disjoint union of $q^N$ cosets of $P^N$ each of which is a compact open set that has diameter $\le q^{-N}$ and volume $q^{-N}$. This gives a partition of $B(1)^m$ into $q^{mN}$ compact open sets (``cells'') of diameter $\le q^{-N}$ and volume $q^{-Nm}$. By the above lemma, if $x, x+h \in D$ and are in one of these cells, say $\gamma$, then 
			\begin{equation}
   			  \norm{f(x+h)-f(x)}\le A\norm{h}^{p} \le A q^{-Np}, 
			\end{equation}
			where $A$ is a constant independent of $x$. Hence, $f(\gamma)$ is contained in a set of diameter $\le q^{-Np}$ and hence volume $\le q^{-Npr}$. Thus $f(D\cap C_{p-1})$ is enclosed in a set of volume $\le q^{mN-Npr}=q^{-N(pr-m)}$. If $p>m/r$ this expression goes to $0$ as $N\to \infty$, and we are done.
		\end{proof}
		
		\begin{rem} 
			If ${\bf f}=(f_1,\dots ,f_r)$ is a polynomial map of $k^m$ into $k^r$ such that $df_1\wedge \dots \wedge df_r\not=0$, then ${\bf f}(k^m)$ is open and Sard's theorem shows that almost every fiber of $f$ is smooth in $k$. So there are always non-critical values. Whether some of them are stable is not known to us.
		\end{rem}
		
		\begin{rem}
			When $r=1$, the above condition reduces to $p>m$. Both this condition and the fact that when $m \ge p+1$ it is possible that the image of the critical set can be all of $k$ were communicated to us by Professor Pierre Deligne. We are grateful for his generosity and for giving us permission to discuss his example.
		\end{rem}
		
		\begin{ex}[Deligne]
			We take $m=p+1$ with coordinates $y,x_1,\ldots,x_p$. The field $k\colonequals\mathbb{F}[[t]][{1}/{t}]$ where $\mathbb{F}$ is a finite field of characteristic $p$, is a local field of characteristic $p$. Then $k$ is a vector space of dimension $p$ over $k^{(p)}\colonequals\{x^p\,|\, x\in k \}$. Let $(a_i)_{1\le i \le p}$ be a basis for $k/k^{(p)}$, for instance $a_i=t^{i-1}$, $( 1 \le i \le p)$. Consider, for an integer $n>1$, prime to $p$,
			\begin{displaymath}
				f=y^n + a_1 x_1^p + \cdots + a_p x_p^p.
			\end{displaymath}
			Then the critical locus is given by $y=0$. Its image under $f$ is obviously all of $k$. If we do not insist that $df\not\equiv 0$, we can omit $y$ so that $f$ maps the critical set $k^p$ onto $k$.
			
			This example is easily modified for the case $r>1$. We consider $k^{p+r}$ with coordinates $y_1,\ldots,y_{r-1}, y,x_1\ldots, x_p$ and take the map $f\colon k^{p+r}\to k^r$ defined by 
			\begin{displaymath}
				f\colon (y_1,\ldots,y_{r-1},y,x_1\ldots, x_p)\mapsto (y_1,\ldots,y_{r-1},y^n+\sum_{i=1}^{p} a_i x_i^p),
			\end{displaymath}
			where the notation is as before. The critical set is again given by $y=0$, and the map restricted to this set is 
			\begin{displaymath}
				f\colon (y_1,\ldots,y_{r-1},0,x_1\ldots, x_p)\mapsto (y_1,\ldots,y_{r-1},\sum_{i=1}^{p} a_i x_i^p),
			\end{displaymath}
			whose range is $k^r$. Exactly as before, if we omit $y$, we get a map where $df_1\wedge\cdots \wedge df_r$ is zero but ${\bf f}$ maps the critical set $k^{p+r-1}$ onto $k^r$.
		\end{ex}

\section{Construction of definite forms and their associated norms}
As mentioned in the remarks at the end of section 2.1 we begin by discussing the construction of definite forms in an arbitrary number of variables over $k$.
\begin{prop}\label{prop:3.1}
	Let $V$ be a finite dimensional vector space over a local field. If $k=\mathbb{R}$, and $\nu(x)$ is a positive definite quadratic form on $V$, then $\abs{\nu(x)}^{1/2}$ is a norm on $V$. If $k$ is non-archimedean, and $r$ is an integer such that $r^2 \geq m$, then there is a homogeneous polynomial $\nu\colon V\to k$ of degree $r$ such that 
	\begin{enumerate}
		\item[a)] $\nu$ is definite, i.e., for $x\in V$, $\nu(x)=0$ if and only if $x=0.$
		\item[b)] $\abs{\nu(x)}^{1/r}$ is a a non-archimedean norm on $V$. 
	\end{enumerate}
\end{prop}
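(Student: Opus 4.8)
The plan is to dispose of the archimedean case in a line and to produce $\nu$ in the non-archimedean case as the reduced norm of a suitable central division algebra, restricted to $V$. For $k=\mathbb{R}$ there is nothing to do: if $\nu$ is positive definite quadratic then $x\mapsto\abs{\nu(x)}^{1/2}=\nu(x)^{1/2}$ is exactly the Euclidean norm attached to $\nu$, the triangle inequality being Cauchy--Schwarz. So assume from now on that $k$ is non-archimedean.

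First I would invoke the structure of the Brauer group: since $k$ is a non-archimedean local field, $\mathrm{Br}(k)\cong\mathbb{Q}/\mathbb{Z}$, and for every integer $r\ge 1$ the element $1/r$ is the invariant of a central division algebra $D$ over $k$ with $\dim_k D=r^{2}$. Fix a $k$-basis of $D$ and let $\mathrm{Nrd}=\mathrm{Nrd}_{D/k}\colon D\to k$ be the reduced norm. In these coordinates $\mathrm{Nrd}$ is a homogeneous polynomial of degree $r$, it is multiplicative, and $\mathrm{Nrd}(\lambda x)=\lambda^{r}\mathrm{Nrd}(x)$ for $\lambda\in k$. Because $D$ is a division ring, a nonzero element is invertible, and $\mathrm{Nrd}(x)\ne 0$ precisely when $x$ is invertible; hence $\mathrm{Nrd}(x)=0$ if and only if $x=0$.

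The core step is to show that $\abs{x}_D\colonequals\abs{\mathrm{Nrd}(x)}^{1/r}$ is a non-archimedean norm on $D$ regarded as an $r^{2}$-dimensional $k$-vector space. Definiteness and the identity $\abs{\lambda x}_D=\abs{\lambda}\,\abs{x}_D$ are immediate from the previous paragraph. For the ultrametric inequality, take $x\ne 0$ and write $x+y=x(1+x^{-1}y)$; multiplicativity gives $\abs{x+y}_D=\abs{x}_D\,\abs{1+x^{-1}y}_D$ with $\abs{x^{-1}y}_D=\abs{y}_D/\abs{x}_D$, so it is enough to prove that $\abs{z}_D\le 1$ forces $\abs{1+z}_D\le 1$. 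Let $E=k(z)\subseteq D$, a commutative subfield of degree $e\mid r$ over $k$. Because $k$ is complete, the absolute value extends uniquely to $E$, namely by $\abs{u}_E=\abs{N_{E/k}(u)}^{1/e}$, and this extension is non-archimedean. Embedding $E$ in a maximal subfield $L$ of $D$ (so $[L:k]=r$), and using that $\mathrm{Nrd}_{D/k}$ restricted to $L$ is $N_{L/k}$ together with transitivity of field norms, one gets $\mathrm{Nrd}_{D/k}(u)=N_{E/k}(u)^{r/e}$ for $u\in E$, hence $\abs{u}_D=\abs{u}_E$ for all $u\in E$. Therefore $\abs{1+z}_D=\abs{1+z}_E\le\max(1,\abs{z}_E)=1$, proving the claim.

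The descent to $V$ is then formal: since $m=\dim_k V\le r^{2}=\dim_k D$, pick a $k$-linear injection $\iota\colon V\hookrightarrow D$ and set $\nu\colonequals\mathrm{Nrd}\circ\iota$. In coordinates $\nu$ is homogeneous of degree $r$ (it is not identically zero, being nonzero off the origin); one has $\nu(x)=0$ iff $\iota(x)=0$ iff $x=0$, which is (a); and $\abs{\nu(x)}^{1/r}=\abs{\iota(x)}_D$ is the restriction of the norm $\abs{\cdot}_D$ to the subspace $\iota(V)$, hence a non-archimedean norm on $V$, which is (b). The main obstacle is the core step $\abs{z}_D\le 1\Rightarrow\abs{1+z}_D\le 1$: it hinges on the facts that every element of the finite-dimensional algebra $D$ generates a finite commutative subfield, that the absolute value extends uniquely and ultrametrically to each such subfield because $k$ is complete, and that the reduced norm restricts to a power of the field norm on subfields; the existence of $D$ of dimension $r^{2}$ for each $r$ is a genuine (though standard) input from the theory of the Brauer group of a local field, and everything else is bookkeeping.
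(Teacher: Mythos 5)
Your proposal is correct and follows essentially the same route as the paper: construct a central division algebra $D$ over $k$ with $\dim_k D = r^2$ via the Brauer group, take $\nu$ to be the reduced norm, observe that definiteness comes from invertibility of nonzero elements, and descend to $V$ by a linear embedding $V\hookrightarrow D$. The one place where you diverge is the proof of the key ultrametric inequality $\abs{z}_D\le 1\Rightarrow\abs{1+z}_D\le 1$: the paper simply reduces it to $\abs{\det\lambda(1+u)}\le 1$ when $\abs{\det\lambda(u)}\le 1$ and cites Weil (\emph{Basic Number Theory}, ch.~I, \S 4) for the structure of the valuation ring of $D$, whereas you reprove it from scratch by passing to the commutative subfield $E=k(z)$, invoking uniqueness of the valuation extension on a finite extension of a complete field, and checking via a maximal subfield $L\supseteq E$ and transitivity of field norms that $\mathrm{Nrd}_{D/k}$ restricts on $E$ to $N_{E/k}^{r/e}$, so $\abs{\cdot}_D$ and $\abs{\cdot}_E$ agree on $E$. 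Your unpacked version is more self-contained and makes the mechanism transparent; the paper's citation is shorter but delegates exactly this work to Weil. Both are correct.
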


\begin{proof}
	We deal only with the case of non-archimedean $k$. By the theory of the Brauer group of $k$ (\cite{weil2013basic}, ch. XII, theorem 1 and its corollary) we can find a division algebra $D$ over $k$ which is central over $k$ and $\dim_k (D)=r^2$. Since $V\hookrightarrow D$, it is enough to prove the proposition for $V=D$. The advantage is that we can use the algebraic structure of $D$.
	
	Let $\nu$ be the reduced norm (\cite{weil2013basic}, ch. IX, proposition 6) of $D$. Then, $\nu\colon D\to k$ is a homogeneous polynomial function on $D$ of degree $r$, and $\nu(x)^r=\det(\lambda(x))$ where $\lambda(x)$ is the endomorphism $y\mapsto xy$ of $D$. Note that $\det(\lambda)$ is a polynomial function on $D$ with values in $k$, homogeneous of degree $r^2$. As $\lambda(x)$ is invertible for any $x\neq 0$ in $D$, $\det(\lambda(x) )$ and hence $\nu(x)$, is non-zero for $x\neq 0$ in $D$. Hence, $\nu$ is a definite form of degree $r$ on $D$. It remains to prove that $N(x)\colonequals\abs{\nu(x)}^{1/r}$ is a non-archimedean norm on $D$. This reduces to showing that 
	$N(1+u)\leq 1 $ if $u\in D$ and $N(u) \leq 1 $, or equivalently, that $\abs{\lambda(1+u)} \leq 1$ if $u\in D$ and $\abs{\lambda(u)} \leq 1,$ which follows from \cite{weil2013basic}, ch. I, section 4.
\end{proof}

\begin{rem}
	Actually, $\nu(x)^r=\det\lambda(x)$ will serve our purposes as well and is obviously a homogeneous polynomial of degree $r^2$, Then $\abs{\nu(x)}^{1/r}=\abs{\det\lambda(x)}^{1/r^2}$. We introduced $\nu$ because it is of smaller degree and this may be of use in other contexts. 
\end{rem}

\section{H\"{o}rmander's inequalities over non-archimedean local fields}
Let $V$ be a finite dimensional vector space over a local, non-archimedean field, $k$, with its canonical norm $\abs{\cdot}$. Let $ \norm{\cdot }$
be a non-archimedean norm on $V$. We may assume that the norms on $k$ and $V$ take values in the set $\{0,q^{\pm 1}, q^{\pm 2},\ldots \},$ where $q$ is the cardinality of the residue field of $k$. Also, let $f\colon V\rightarrow k$ be a polynomial function, and let $Z(f)$ denote its zero locus. For $x \in V$ and nonempty $E\subset V$ let ${\rm dist} (x, E) \colonequals \inf_{y\in E}\norm{x-y} $.

\begin{thm}[H1]\label{thm:H1}
	Let $f\colon V\rightarrow k$ be a polynomial function on $V$. Suppose that $Z(f)\neq \emptyset$. Then there exist constants $C>0,$ $\alpha \geq 0$ such that 
	\begin{equation}\label{eq:4.1}
	\lvert f(x) \rvert \geq C\cdot {\rm dist} (x,Z(f))^\alpha
	\end{equation}
	for all $x\in V$ with $\norm{x} \leq 1.$
\end{thm}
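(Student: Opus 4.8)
The plan is to derive H1 from the linear (Artin--Greenberg) approximation bound for solutions of polynomial equations over a complete discrete valuation ring, as observed in \cite{bollaerts1990estimate}. After the routine reduction to $V=k^m$ with the max norm (equivalence of norms on a finite-dimensional space, up to replacing the unit ball by a commensurable lattice and adjusting the constant), the unit ball $\set{x:\norm{x}\le 1}$ equals $\mathcal{O}^m$, where $\mathcal{O}$ is the ring of integers of $k$, and it is compact. Multiplying $f$ by a suitable $\lambda\in k^\times$ clears the denominators of its coefficients, leaves $Z(f)$ unchanged, and scales $\abs{f(x)}$ by the constant $\abs{\lambda}$, so I may assume $f\in\mathcal{O}[X_1,\dots,X_m]$. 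Let $\mathrm{ord}$ be the additive valuation, normalized so that $\abs{a}=q^{-\mathrm{ord}(a)}$ for $a\in k$.

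Next I would dispose of degenerate cases. If $f\equiv 0$ or $f(x)=0$, then $\mathrm{dist}(x,Z(f))=0$ and \eqref{eq:4.1} is trivial. If $Z(f)\neq\emptyset$ but $Z(f)\cap\mathcal{O}^m=\emptyset$, then $\abs{f}$ is bounded below by a positive constant on the compact set $\mathcal{O}^m$, whereas $\mathrm{dist}(x,Z(f))\le\max(\norm{x},\mathrm{dist}(0,Z(f)))$ is bounded above there by the ultrametric inequality, so \eqref{eq:4.1} holds with $\alpha=0$ and $C$ small. Hence I may assume $Z(f)\cap\mathcal{O}^m\neq\emptyset$; since a zero $y$ with $\norm{y}>1$ satisfies $\norm{x-y}=\norm{y}>1$ whenever $\norm{x}\le 1$, we then have $\mathrm{dist}(x,Z(f))=\mathrm{dist}(x,Z(f)\cap\mathcal{O}^m)\le 1$ for all $x\in\mathcal{O}^m$, and this infimum is attained by compactness of $Z(f)\cap\mathcal{O}^m$.

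The main step is to invoke Greenberg's theorem \cite{greenberg1966rational}: there are integers $a\ge 1$ and $b\ge 0$, depending only on $f$, such that for every $x\in\mathcal{O}^m$ and every integer $\nu\ge 1$, if $\mathrm{ord}(f(x))\ge a\nu+b$ then there is a $y\in\mathcal{O}^m$ with $f(y)=0$ and $\mathrm{ord}(x-y)\ge\nu$. Given $x\in\mathcal{O}^m$ with $f(x)\neq 0$, set $N=\mathrm{ord}(f(x))$. If $N<a+b$ then, since $\mathrm{dist}(x,Z(f))\le 1$, we get $\abs{f(x)}=q^{-N}>q^{-(a+b)}\ge q^{-(a+b)}\,\mathrm{dist}(x,Z(f))^a$. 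If $N\ge a+b$, take $\nu=\lfloor(N-b)/a\rfloor\ge 1$; then $a\nu+b\le N$, so Greenberg supplies $y\in Z(f)\cap\mathcal{O}^m$ with $\mathrm{ord}(x-y)\ge\nu$, whence $\mathrm{dist}(x,Z(f))\le q^{-\nu}\le q^{\,1-(N-b)/a}$, and raising to the power $a$ yields $\mathrm{dist}(x,Z(f))^a\le q^{a+b}\abs{f(x)}$. In all cases \eqref{eq:4.1} holds on $\mathcal{O}^m$ with $C=q^{-(a+b)}$ and $\alpha=a$, and reversing the initial reductions finishes the proof.

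I expect the only genuine content to be the appeal to \cite{greenberg1966rational}; the point I would watch is that Greenberg's bound is \emph{affine} in $\nu$ (of the shape $a\nu+b$), as this is exactly what forces the exponent $\alpha$ to be finite — a merely polynomial or weaker bound would not give an inequality of the stated form. I would also confirm that Greenberg's theorem is available in the equal-characteristic case $k=\mathbb{F}_q((t))$: it is, since it holds for complete discrete valuation rings and every local field is complete.
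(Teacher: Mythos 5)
Your proof is correct and follows essentially the same route as the paper's: reduce to $f\in\mathcal{O}[X_1,\dots,X_m]$ on $\mathcal{O}^m$, dispose of the case $Z(f)\cap\mathcal{O}^m=\emptyset$ by compactness, and then invoke Greenberg's theorem to pass from the order of vanishing of $f(x)$ to the distance from $x$ to $Z(f)$. The only cosmetic difference is that you restate Greenberg's bound in the equivalent affine form ``$\mathrm{ord}(f(x))\ge a\nu+b$ implies $\mathrm{ord}(x-y)\ge\nu$'' rather than the paper's literal ``$\mathrm{ord}(f(x))\ge\nu$ (for $\nu\ge N$) gives $\mathrm{ord}(x-y)\ge[\nu/c]-s$''; these are interchangeable, and both yield $\alpha=c$ (your $a$) with the same kind of constant.
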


\begin{thm}[H2]\label{thm:H2}
	Let $f\colon V\rightarrow k$ be a polynomial function, $Z(f)$ as above. Then
	\begin{enumerate} 
		\item[a)] if $Z(f) = \emptyset$, then there exist constants $C>0$ and $ \beta \geq 0$ such that
		\begin{equation}
		\abs{f(x)} \geq C\cdot \frac{1}{\norm{x}^{\beta} } \qquad (x\in V, \norm{x}\geq 1)
		\end{equation}
		\item[b)] if $Z(f) \neq \emptyset$, then there exist constants $C>0$ and $\alpha, \beta \geq 0$ such that
		\begin{equation}
		\norm{f(x)} \geq C\cdot \frac{{\rm dist}(x,Z(f) )^\alpha }{\norm{x}^\beta}\qquad (x\in V, \norm{x}\geq 1)
		\end{equation}
	\end{enumerate}
\end{thm}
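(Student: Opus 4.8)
The plan is to deduce H2 from H1 (theorem~\ref{thm:H1}) by an inversion trick carried out inside a central division algebra. Two routine reductions come first. Since all non-archimedean norms on a finite-dimensional $k$-vector space are equivalent, it suffices to establish the two estimates for one convenient norm; and since $\mathrm{dist}(\cdot,Z(f))$ is translation invariant while $\norm{x-a}\asymp\norm{x}$ for $\norm{x}\ge 1$, we may in case (b) assume $0\in Z(f)$, i.e. $f(0)=0$ (the case $\deg f=0$ being trivial). Next I would pass from $V$ to a division algebra: by proposition~\ref{prop:3.1} choose a central division algebra $D$ over $k$ with $r\ge 2$ and $\dim_k D=r^2\ge m$, fix a $k$-linear embedding $V\hookrightarrow D$ and a $k$-linear projection $p\colon D\to V$ restricting to the identity on $V$, and replace $f$ by $\tilde f\colonequals f\circ p$, a polynomial on $D$ with $\tilde f|_V=f$ and $Z(\tilde f)=p^{-1}(Z(f))$, so that $Z(\tilde f)=\emptyset\iff Z(f)=\emptyset$ and $0\in Z(\tilde f)$ in case (b). If H2 holds for $\tilde f$ on $D$, then restricting to $x=v\in V$ and using boundedness of $p$ (so $\norm{p(u)}_V\le C\norm{u}_D$, whence $\mathrm{dist}_D(v,Z(\tilde f))\ge C^{-1}\mathrm{dist}_V(v,Z(f))$) together with $\norm{v}_D\asymp\norm{v}_V$ recovers H2 for $f$ on $V$ for $\norm{v}_V$ large; the remaining bounded annuli are handled by H1. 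So it is enough to prove H2 on $D$.

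Now I would use the algebra structure. Let $\lambda(x)\in\mathrm{End}_k(D)$ be left multiplication by $x$; it is $k$-linear in $x$, and by the proof of proposition~\ref{prop:3.1} one has $\det\lambda(x)=\nu(x)^r$, homogeneous of degree $r^2$ and nonzero for $x\ne 0$, where $\nu$ is the reduced norm. By Cramer's rule $x^{-1}=\lambda(x)^{-1}(1)=\det\lambda(x)^{-1}\,(\mathrm{adj}\,\lambda(x))(1)$, so the coordinates of $x\mapsto x^{-1}$ are $\det\lambda(x)^{-1}$ times polynomials of degree $\le r^2-1$ in $x$. Hence, if $\deg f=d\ge 1$, then
\[
g(x)\colonequals\nu(x)^{rd}\,f(x^{-1})=\det\lambda(x)^{d}\,f(x^{-1})
\]
clears all denominators and extends to a polynomial on $D$ of degree $\le r^2 d$, not identically zero since $f$ is not, and satisfying $f(x)=\nu(x)^{rd}g(x^{-1})$ on $D^\times$. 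Because $\nu$ is definite, $g(x)=0$ forces $x=0$ or $f(x^{-1})=0$, so $Z(g)=\{0\}\cup(Z(f)\setminus\{0\})^{-1}$, a closed set since inversion is a homeomorphism of $D^\times$; in particular $Z(g)\ne\emptyset$ (every monomial of $g$ has degree $\ge d(r^2-1)\ge 1$, so $g(0)=0$). Applying H1 to $g$ yields $C'>0,\ \alpha'\ge 0$ with $\abs{g(y)}\ge C'\,\mathrm{dist}(y,Z(g))^{\alpha'}$ for $\norm{y}\le 1$.

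The last step substitutes $y=x^{-1}$. Write $N\colonequals\abs{\nu}^{1/r}$; by proposition~\ref{prop:3.1} this is a norm on $D$, hence $N\asymp\norm{\cdot}$, and $N$ is multiplicative because $\nu$ is, so $N(x^{-1})=N(x)^{-1}$. Fix $R_0\ge 1$ with $\norm{x}\ge R_0\Rightarrow\norm{x^{-1}}\le 1$. For such $x$, from $f(x)=\nu(x)^{rd}g(x^{-1})$ and H1 for $g$,
\[
\abs{f(x)}=N(x)^{r^2 d}\,\abs{g(x^{-1})}\ \ge\ C'\,N(x)^{r^2 d}\,\mathrm{dist}(x^{-1},Z(g))^{\alpha'}.
\]
In case (a), $Z(g)=\{0\}$, so $\mathrm{dist}(x^{-1},Z(g))=\norm{x^{-1}}\asymp\norm{x}^{-1}$ and $\abs{f(x)}\gtrsim\norm{x}^{r^2 d-\alpha'}\ge C\norm{x}^{-\beta}$ with $\beta\colonequals\max(0,\alpha'-r^2 d)$; for $1\le\norm{x}\le R_0$, $f$ is continuous and nowhere zero on this compact set. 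In case (b), with $0\in Z(f)$, I claim
\[
\mathrm{dist}(x^{-1},Z(g))\ \asymp\ \mathrm{dist}(x,Z(f))\big/\norm{x}^{2}\qquad(\norm{x}\ge R_0),
\]
which I would prove from $x^{-1}-z^{-1}=x^{-1}(z-x)z^{-1}$, multiplicativity of $N$, and a short case split on $\norm{z}$ versus $\norm{x}$: zeros $z\in Z(f)$ with $\norm{z}\ne\norm{x}$ lie (by the ultrametric inequality) at distance of order $\norm{x}^{-1}$ on both sides, matching the contribution of $0\in Z(g)$, while zeros with $\norm{z}=\norm{x}$ contribute $\asymp\norm{x-z}/\norm{x}^{2}$ on the left and $\asymp\norm{x-z}$ to $\mathrm{dist}(x,Z(f))$. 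Substituting gives $\abs{f(x)}\gtrsim\mathrm{dist}(x,Z(f))^{\alpha'}\,\norm{x}^{r^2 d-2\alpha'}$, i.e. the desired bound with $\alpha\colonequals\alpha'$, $\beta\colonequals\max(0,2\alpha'-r^2 d)$; the annulus $1\le\norm{x}\le R_0$ is again covered by H1, which extends from $\norm{x}\le 1$ to $\norm{x}\le R$ for any $R$ by the rescaling $f\mapsto f(t\,\cdot\,)$.

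I expect the main obstacle to be the displayed distance comparison in case (b). It genuinely needs the normalization $0\in Z(f)$: without it, a configuration in which every zero of $f$ has norm strictly larger than $\norm{x}$ forces $\mathrm{dist}(x^{-1},Z(g))\asymp\norm{x^{-1}}$ while $\mathrm{dist}(x,Z(f))$ can be much larger, and no bound of the required shape relates them; once $0$ is a zero this pathology is excluded and the ultrametric bookkeeping closes. Lesser points still requiring care are the verification that $\nu(x)^{rd}f(x^{-1})$ really is polynomial (Cramer's rule, or equivalently the remark following proposition~\ref{prop:3.1}), the closedness of $(Z(f)\setminus\{0\})^{-1}$ in $D$, and the elementary ``large ball'' strengthening of H1 used repeatedly to absorb bounded regions.
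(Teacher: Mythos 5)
Your proposal follows essentially the same route as the paper: pass from $V$ to a central division algebra $D$ of dimension $r^2 \ge \dim V$, dualize $f$ by inversion to a polynomial $f^*(x)=f(x^{-1})\nu(x)^d$ (lemma~\ref{lem:4.8}; your $\det\lambda(x)^d=\nu(x)^{rd}$ is the variant noted in the remark after proposition~\ref{prop:3.1}), apply H1 to $f^*$, and carry the estimate back through $x\mapsto x^{-1}$ via the multiplicative norm $\abs{\nu}^{1/r}$ and the distance comparison $\mathrm{dist}(x^{-1},Z(f^*))\ge C\,\mathrm{dist}(x,Z(f))/\norm{x}^2$ (lemma~\ref{lem:4.15}). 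The only cosmetic difference is that you translate so that $0\in Z(f)$ in order to bound $\mathrm{dist}(x,Z(f))$ by $\norm{x}$, whereas the paper proves the same bound up to a constant as lemma~\ref{lem:4.13} without normalizing, and the paper is content with the one-sided inequality where you assert a two-sided $\asymp$.
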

\begin{rem}\label{rem:4.4}
	Theorem \ref{thm:H1} and \ref{thm:H2} were proved by H\"{o}rmander \cite{hormander1958division} when $k=\mathbb{R}$. Also, H1 is a special case of the {\L}ojasiewicz inequality for $f$ a real analytic function \cite{lojasiewicz1959}.
\end{rem}

In proving H1 we may assume that $V=k^m$ and $f\in R[x_1,\ldots,x_m]$, $R$ being the ring of integers in $k$. Let $P\subset R$ be the maximal ideal of $R$. Suppose that $Z(f)\neq \emptyset$ but $Z(f)\cap R^m= \emptyset$. Then there exists a constant $b>0$ such that $\abs{f(x)}\geq b>0$ for $x\in R^m$. On the other hand, as $R^m$ is compact, there exists $b_1>0$ such that ${\rm dist}(x,Z(f)) \leq b_1$ for all $x\in R^m$. Hence $\abs{f(x)}\geq b b_1^{-1} b_1 \geq b b_1^{-1}{\rm dist}(x,Z(f))$ for all $x\in R^m$. Hence we may assume in addition that $Z(f)\cap R^m \neq\emptyset$ in the proof of H1.

\subsection{Proof of H1: $k$ non-archimedean} 
We follow Greenberg (\cite{greenberg1966rational}), specialized to the case of a single polynomial.
\begin{proof}
	By theorem 1 of \cite{greenberg1966rational}, applied to the single polynomial $f$, we can find integers, $N,c\geq 1$ and $s\geq 0$ such that if $\nu \geq N$ and $f(x)\equiv 0\; (\text{mod }P^\nu)$, and $x\in R^m$, then there exists $y\in R^m$ such that $f(y)= 0$ and $x_i-y_i \equiv 0 \; (\text{mod }P^{[\nu/c]-s})$ for all $i$.
	
	Assume $\abs{ f(x) } =q^{-(N+\ell)} $, $\ell \geq 0$. Then, there exists $y\in Z(f)\cap R^m$ such that 
	\begin{equation*}
	\norm{x-y} \leq q^{-[(N+\ell)/c]+s} \leq q^{-[(N+\ell)/c -1]+s}\leq q^{s+1}| f(x)|^{1/c} ,
	\end{equation*}
	which implies that 
	\begin{equation*}
	{\rm dist}(x,Z(f)\cap R^m) \leq q^{s+1} \lvert f(x) \rvert^{1/c},
	\end{equation*}
	so that
	\begin{equation*}
	  \lvert f(x) \rvert  \geq \frac{{\rm dist}(x, Z(f)\cap R^m)^c}{q^{c(s+1)}}
	  \geq \frac{{\rm dist}(x, Z(f))^c}{q^{c(s+1)}}.
	\end{equation*}
	Thus, H1 is proved for $x\in R^m$ with $\abs{f(x)}\leq q^{-N} $. For $x$ in $R^m$ with $\abs{f(x)}> q^{-N},$ we have $q^{-N}< \abs{f(x)} \leq 1$, while ${\rm dist}(x,Z(f)\cap R^m)\leq 1$ since $\norm{x-y}\leq 1$ for $x,y\in R^m$. Hence $\abs{f(x)}\geq q^{-N}{\rm dist}(x,Z(f)\cap R^m) \geq q^{-N}\textrm{dist}(x,Z(f)\cap R^m)^c\geq q^{-N}\textrm{dist}(x,Z(f))^c$. If $C=\min(q^{-N},q^{-(s+1)c})$, then we have H1 with $\alpha=c$.
\end{proof}
\begin{rem}\label{rem:4.5}
	That the local version of {\L}ojasiewicz inequality comes out of \cite{greenberg1966rational} has been observed in \cite{bollaerts1990estimate}; we give this proof since it includes the case when $k$ has characteristic $>0.$ {Greenberg's result is applicable here because $R$ is then complete ($k^*=k$ in his notation).}
\end{rem}
\subsection{Proof of H2}
\begin{lem}
	If {\rm H2} is true for a $k$-vector space $V$, then it is also true for any subspace $W$ of $V$. In particular, for a central division algebra, $D_r$ over $k$, of dimension $r^2 \geq \dim_k V $, it is  enough to prove {\rm H2} for $D_r$.
\end{lem}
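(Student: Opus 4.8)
The plan is to deduce H2 for a subspace $W$ from H2 for the ambient space $V$ by \emph{extending} a polynomial on $W$ to a polynomial on $V$ along a linear projection, while equipping $V$ with a norm that restricts to the given norm on $W$; the concluding assertion then follows from Proposition \ref{prop:3.1}, since any $k$-vector space of dimension $\le r^2$ admits a $k$-linear embedding into a central division algebra $D_r$ over $k$ with $\dim_k D_r=r^2$.

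First I would fix a $k$-linear complement $V=W\oplus W'$, write $\pi\colon V\to W$ for the projection along $W'$, and, given a polynomial $f\colon W\to k$, set $F\colonequals f\circ\pi\colon V\to k$. Then $F$ is a polynomial function on $V$ with $F|_W=f$, and its zero locus is the ``cylinder'' $Z(F)=\pi^{-1}(Z(f))=Z(f)+W'$; in particular $Z(F)=\emptyset$ if and only if $Z(f)=\emptyset$. Next, since H2 furnishes constants for \emph{any} choice of non-archimedean norm, I would equip $V$ with $\norm{w+w'}_V\colonequals\max\bigl(\norm{w}_W,\norm{w'}_{W'}\bigr)$, where $\norm{\cdot}_W$ denotes the given norm on $W$ and $\norm{\cdot}_{W'}$ is an arbitrary non-archimedean norm on $W'$; this is a non-archimedean norm on $V$ whose restriction to $W$ is exactly $\norm{\cdot}_W$. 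The point of this choice is that for every $w\in W$ the two distance functions agree:
$$
\mathrm{dist}_V\bigl(w,Z(F)\bigr)=\inf_{z\in Z(f),\ w'\in W'}\max\bigl(\norm{w-z}_W,\norm{w'}_{W'}\bigr)=\inf_{z\in Z(f)}\norm{w-z}_W=\mathrm{dist}_W\bigl(w,Z(f)\bigr),
$$
the middle infimum being attained at $w'=0$.

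With this in hand the descent is immediate: apply H2 for $(V,\norm{\cdot}_V,F)$ and restrict the resulting inequality to points of the form $v=w\in W$, where $F(w)=f(w)$ and $\norm{w}_V=\norm{w}_W$. If $Z(f)=\emptyset$, then $Z(F)=\emptyset$ and H2(a) for $V$ gives $\abs{f(w)}\ge C\norm{w}_W^{-\beta}$ for $\norm{w}_W\ge 1$, which is H2(a) for $W$. If $Z(f)\neq\emptyset$, then $Z(F)\neq\emptyset$ and H2(b) for $V$, combined with the displayed equality of distances, gives $\abs{f(w)}\ge C\,\mathrm{dist}_W(w,Z(f))^{\alpha}\norm{w}_W^{-\beta}$ for $\norm{w}_W\ge 1$, which is H2(b) for $W$. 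Finally, by Proposition \ref{prop:3.1} there is a central division algebra $D_r$ over $k$ with $\dim_k D_r=r^2\ge\dim_k V$; since $V$ has dimension at most $r^2$ it embeds $k$-linearly into $D_r$, so by what we have just shown H2 for $D_r$ implies H2 for $V$.

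There is no substantial obstacle here; the only points requiring care are the two structural choices that keep the argument clean — that the extension of $f$ be taken as a pullback along a projection, so that $Z(F)$ is genuinely a cylinder over $Z(f)$, and that the ambient norm be chosen to restrict to the given norm on $W$, so that distances to the zero loci and the region $\norm{\cdot}\ge 1$ match on the nose. (Had one used an unrelated norm on $V$, one would instead have to track the equivalence constants between the two norms on $W$ and patch the resulting bounded annulus using Theorem \ref{thm:H1} — harmless, but less transparent.)
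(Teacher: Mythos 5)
Your proof is correct and is essentially the same as the paper's: both extend $f$ to $V$ by pulling back along the projection from a direct-sum decomposition $V=W\oplus W'$, equip $V$ with the max-norm built from the given norm on $W$ and an auxiliary one on the complement, observe that distances to the cylindrical zero locus restrict correctly, and then restrict H2 on $V$ to $W$. The only cosmetic difference is that the paper does not spell out the distance equality as a display, and invokes the existence of $D_r$ implicitly rather than via Proposition~\ref{prop:3.1}.
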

\begin{proof} 
	Let $W\subseteq V$ be a subspace, and $U\subseteq V$ such that $V=W\oplus U \simeq W\times U $. Let $f$ be a polynomial on $W$. Define the polynomial $g$ on $V$ by $g(w+u)\colonequals f(w).$ For $w\in W, u\in U,$ we take $\norm{w+u}  = \text{max}(\norm{u}, \lVert w \rVert)$; because $U$ and $W$ are complementary, this is non-archimedean. Clearly $Z(g)=Z(f)\times U$. 
	
	Suppose $Z(f)=\emptyset$. Then $Z(g)=\emptyset$. Since H2 is true for $V$ and $W\subset V$, there exist constants $C>0, \beta \geq 0$ such that 
	$\abs{f(w)}\geq {C}{\norm{w}^{-\beta}}$ for $w\in W,\norm{w}\geq 1.$
	We may therefore assume that $Z(f)\neq \emptyset$, so $Z(g)\neq \emptyset.$
	
	Then, $ \abs{g(x)}  \geq C\ {{\rm dist} (x, Z(g))^{\alpha}}{\norm{x}^{-\beta}}$ for  $x \in V$, $ \norm{x} \geq 1$ where $C>0,$ $\alpha,\beta\geq0$ are constants. If $x= w\in W$, ${\rm dist}(w, Z(g))  ={\rm dist}(w,Z(f))$.	
\end{proof}

Now we prove H2 for $D_r$. Our proof is inspired by H\"{o}rmander's \cite{hormander1958division}. It replaces the inversion in his proof by the involution $x\mapsto x^{-1}$ of $D_r^{\times}\colonequals D_r\setminus \{0\} .$

For a division algebra $D_r$ of dimension $r^2$, central over $k$, let us recall $\nu \colonequals\nu_r\colon D_r \to k$ of proposition \ref{prop:3.1}, and note that it has the following property: if  $k'$ is any field containing $k$ such that there exists an isomorphism $F\colon k'\otimes_k D_r\stackrel{\sim}{\longrightarrow} M_r(k')=M_r $ where $M_r$ is the algebra of $r\times r$ matrices over $k'$, then $\nu(a)= \det F(a)$ for $a\in D_r$ (\cite{weil2013basic}, Proposition 6, p. 168).


\begin{lem}\label{lem:4.8}
	For any polynomial function $f\colon D \rightarrow k$ of degree $d$, $f$ not necessarily homogeneous, let $f^*(x)\colonequals f(x^{-1})\nu (x)^{ d}$ for $x\neq 0$; then $f^*(x)$ extends uniquely to a polynomial function $D_r\rightarrow k$. Moreover, for non-zero $x$,  $x \in Z(f)$ if and only if $x^{-1}\in Z(f^*)$.
\end{lem}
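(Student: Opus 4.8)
The plan is to produce $f^*$ explicitly as a polynomial by clearing denominators with the \emph{reduced adjugate} of $D_r$, and then to read off the zero-locus statement from multiplicativity of $\nu$.

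The first step is to record the key algebraic fact about $D\colonequals D_r$: there is a polynomial map $x\mapsto x^{\sharp}$ from $D$ to $D$, defined over $k$ and homogeneous of degree $r-1$, with $x\,x^{\sharp}=x^{\sharp}\,x=\nu(x)\cdot 1$ for all $x\in D$. I would get this from the reduced characteristic polynomial $\mathrm{Prd}(x;T)=T^{r}-c_1(x)T^{r-1}+\cdots+(-1)^{r}c_r(x)$, whose coefficients $c_i$ are polynomial functions $D\to k$, homogeneous of degree $i$, with $c_r=\nu$: over a splitting field $k'$ with $F\colon k'\otimes_k D\xrightarrow{\sim}M_r(k')$ the $c_i(x)$ are the elementary symmetric functions of the eigenvalues of $F(1\otimes x)$, hence $k$-rational and independent of $F$ (\cite{weil2013basic}, ch.\ IX). Cayley--Hamilton holds after base change to $k'$, and both sides are defined over $k$, so $\mathrm{Prd}(x;x)=0$ inside the commutative subring $k[x]\subseteq D$; solving for the constant term gives
$$x^{\sharp}\colonequals(-1)^{r-1}\bigl(x^{r-1}-c_1(x)x^{r-2}+\cdots+(-1)^{r-1}c_{r-1}(x)\cdot 1\bigr),$$
which is a $k$-linear combination of the polynomial maps $x\mapsto c_j(x)\,x^{r-1-j}$, each homogeneous of degree $r-1$. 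In particular $x^{-1}=\nu(x)^{-1}x^{\sharp}$ for $x\neq 0$.

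Next I would decompose $f=\sum_{j=0}^{d}f_j$ into homogeneous components, $f_j$ of degree $j$, and compute for $x\neq 0$, using $f_j(\lambda y)=\lambda^{j}f_j(y)$ with the scalar $\lambda=\nu(x)^{-1}$,
$$f^{*}(x)=\nu(x)^{d}\sum_{j=0}^{d}f_j\bigl(\nu(x)^{-1}x^{\sharp}\bigr)=\sum_{j=0}^{d}\nu(x)^{d-j}f_j(x^{\sharp}).$$
Since $0\le j\le d$, every summand $\nu(x)^{d-j}f_j(x^{\sharp})$ is an honest polynomial function of $x$ (the composite of the polynomial map $x\mapsto x^{\sharp}$ with $f_j$, times the polynomial $\nu^{d-j}$), homogeneous of degree $r(d-j)+j(r-1)=rd-j$. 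Hence $g(x)\colonequals\sum_{j=0}^{d}\nu(x)^{d-j}f_j(x^{\sharp})$ is a polynomial function $D\to k$ (of degree $\le rd$) agreeing with $f^{*}$ on $D^{\times}$. For uniqueness, $D^{\times}=D\setminus Z(\nu)$ is a non-empty Zariski-open subset of the affine space $D$ (non-empty since $\nu(1)=1$), so its $k$-points are Zariski-dense because $k$ is infinite; a polynomial function is determined by its restriction to a dense set, so $g$ is the unique polynomial extension, which we rename $f^{*}$. Finally, for $x\neq 0$ the defining formula applied at $x^{-1}$ (legitimate, as $x^{-1}\neq 0$) gives $f^{*}(x^{-1})=f(x)\,\nu(x^{-1})^{d}=f(x)\,\nu(x)^{-d}$, using $\nu(x)\nu(x^{-1})=\nu(1)=1$; since $\nu(x)^{-d}\neq 0$, $f^{*}(x^{-1})=0$ iff $f(x)=0$, i.e.\ $x\in Z(f)\iff x^{-1}\in Z(f^{*})$.

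The only genuine obstacle is the first step: one must know that the denominator of $x^{-1}$, as a rational function of $x$ over $k$, is the reduced norm $\nu$ of degree $r$, and not $\det\lambda(x)=\nu(x)^{r}$ of degree $r^{2}$ (which is what Cramer's rule applied to the regular representation $\lambda$ would give). This sharper clearing factor $\nu(x)^{d}$ is exactly what the statement asserts, and it is what forces the use of the reduced characteristic polynomial and Cayley--Hamilton \emph{inside} $D_r$ rather than linear algebra for $\lambda(x)$; the remaining steps are routine.
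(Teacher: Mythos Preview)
Your argument is correct. The reduced characteristic polynomial gives you Cayley--Hamilton inside $D$, hence a polynomial ``adjugate'' $x\mapsto x^{\sharp}$ of degree $r-1$ with $x\,x^{\sharp}=\nu(x)\cdot 1$, and then the homogeneous decomposition $f=\sum_j f_j$ together with $x^{-1}=\nu(x)^{-1}x^{\sharp}$ yields $f^{*}(x)=\sum_{j}\nu(x)^{d-j}f_j(x^{\sharp})$, visibly polynomial. Uniqueness and the zero-locus statement are handled as you say.

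The paper proceeds differently: it base-changes to a splitting field $k'$ with $k'\otimes_k D_r\simeq M_r(k')$, notes that the assertion ``$f^{*}$ extends to a polynomial'' is compatible with sums and products in $f$, and then checks it only on the algebra generators $f=a_{ij}$ (the matrix entries), where $a_{ij}(x^{-1})\det(x)=A_{ij}(x)$ is the classical cofactor. Your approach and theirs encode the same identity --- after base change your $x^{\sharp}$ \emph{is} the cofactor matrix --- but you stay over $k$ throughout and produce an explicit closed formula for $f^{*}$, at the cost of invoking the $k$-rationality of the reduced characteristic polynomial and Cayley--Hamilton for central simple algebras. The paper's route is shorter to write and uses only the inverse-by-cofactors formula for matrices, but requires the extension-of-scalars step and the observation that polynomiality over $k'$ implies polynomiality over $k$. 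Your final paragraph correctly identifies the crux: the clearing factor must be $\nu(x)^{d}$ of degree $rd$, not $\det\lambda(x)^{d}$ of degree $r^{2}d$, and that is exactly what the reduced (rather than the regular) Cayley--Hamilton provides.
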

\begin{proof}
	Uniqueness is obvious. To prove that $f^*$ has a polynomial extension it suffices to prove it for $k'\otimes_k D_r$, where $k'$ is a separable extension of $k$ such that $k'\otimes_k D_r\simeq M_r(k')$. The required result is compatible with addition and multiplication of the $f's$ so that it is enough to verify it for $f=1$ (obvious) and $f=a_{ij}$, a matrix entry; then $f^*=a^{ij}\det =A_{ij}$, the corresponding cofactor. The last statement of the lemma is obvious
\end{proof}

\begin{rem}
	From now on we use the norm $\norm{x}=\abs{\nu(x)}^{1/r}$ for $D_r$, $r\geq 2$.
\end{rem}

\begin{lem}
	If $x,y,x-y$ are all nonzero, then $\lVert x-y  \rVert = \lVert x^{-1}- y^{-1} \rVert \lVert x \rVert \lVert y\rVert$
\end{lem}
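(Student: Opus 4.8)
The plan is to reduce everything to the multiplicativity of the reduced norm $\nu$ and a one-line algebraic identity. First I would record that since $\nu\colon D_r\to k$ is the reduced norm, it satisfies $\nu(ab)=\nu(a)\nu(b)$ for $a,b\in D_r$ (this follows from $\nu(a)=\det F(a)$ after a splitting base change $k'\otimes_k D_r\simeq M_r(k')$, as recalled just before Lemma \ref{lem:4.8}). Consequently the norm $\norm{z}=\abs{\nu(z)}^{1/r}$ is multiplicative on $D_r^\times$: $\norm{ab}=\norm{a}\,\norm{b}$, and in particular $\norm{z^{-1}}=\norm{z}^{-1}$ for $z\neq 0$.

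Next I would use the elementary identity valid in any (associative, unital) ring where the relevant inverses exist:
\begin{displaymath}
x^{-1}-y^{-1}=x^{-1}(y-x)y^{-1}=-\,x^{-1}(x-y)y^{-1}.
\end{displaymath}
All three factors $x^{-1}$, $x-y$, $y^{-1}$ are nonzero by hypothesis (and $D_r$ is a division algebra, so there is no issue of zero divisors), so applying multiplicativity of $\norm{\cdot}$ gives
\begin{displaymath}
\norm{x^{-1}-y^{-1}}=\norm{x^{-1}}\,\norm{x-y}\,\norm{y^{-1}}=\frac{\norm{x-y}}{\norm{x}\,\norm{y}}.
\end{displaymath}
Rearranging yields $\norm{x-y}=\norm{x^{-1}-y^{-1}}\,\norm{x}\,\norm{y}$, which is the claim.

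There is essentially no serious obstacle here; the only point requiring a little care is justifying that $\norm{\cdot}=\abs{\nu(\cdot)}^{1/r}$ is genuinely multiplicative, i.e. that $\nu$ is multiplicative. This is standard for the reduced norm of a central simple algebra and is the content of the splitting-field description cited from \cite{weil2013basic}; alternatively one can note $\nu(x)^r=\det\lambda(x)$ from Proposition \ref{prop:3.1} and use multiplicativity of $\det$ together with $\lambda(xy)=\lambda(x)\lambda(y)$. Everything else is the sign-free algebraic identity above and the fact that $\abs{\cdot}$ on $k$ is itself multiplicative.
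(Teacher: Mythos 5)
Your proof is correct and takes essentially the same route as the paper: the paper uses the equivalent identity $y-x = x(x^{-1}-y^{-1})y$ together with multiplicativity of $\norm{\cdot}$, which is just a rearrangement of your $x^{-1}-y^{-1}=x^{-1}(y-x)y^{-1}$. The extra care you take to justify multiplicativity of the reduced norm is a reasonable addition but not a departure in method.
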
 

\begin{proof}
	Use $ y-x = x\left( x^{-1} - y^{-1} \right) y$ and the multiplicativity of $\norm{\cdot}$.
\end{proof}
The next two lemmas are auxiliary before we prove H2 for $D_r$.
\begin{lem} \label{lem:4.13}
	If $Z(f)$ is nonempty, there exists a constant $A\geq 1$ such that  ${\rm dist}(x,Z(f))\leq A\norm{x}$ for all $x$ with $\norm{x}\geq 1.$
\end{lem}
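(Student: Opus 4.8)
The plan is to reduce everything to the non-archimedean triangle inequality together with the hypothesis $\norm{x}\ge 1$; no serious work is involved. First I would fix, once and for all, a point $y_0\in Z(f)$, which exists because $Z(f)\neq\emptyset$. For an arbitrary $x\in D_r$, the definition of the distance and the ultrametric inequality give
$$
{\rm dist}(x,Z(f))\le \norm{x-y_0}\le \max\bigl(\norm{x},\norm{y_0}\bigr).
$$

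Next I would split into two cases according to the size of $\norm{y_0}$ relative to $\norm{x}$, using that we only care about $x$ with $\norm{x}\ge 1$. If $\norm{y_0}\le\norm{x}$, the right-hand side above equals $\norm{x}$. If instead $\norm{y_0}>\norm{x}\ge 1$, then the right-hand side equals $\norm{y_0}=\norm{y_0}\cdot 1\le\norm{y_0}\cdot\norm{x}$. In both cases ${\rm dist}(x,Z(f))\le A\norm{x}$ for $x$ with $\norm{x}\ge 1$, where one may take $A\colonequals\max\bigl(1,\norm{y_0}\bigr)$, which is $\ge 1$ as required.

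There is no genuine obstacle here; the only points worth flagging are that the argument uses the ultrametric inequality for $\norm{\cdot}$ — legitimate since $\norm{x}=\abs{\nu(x)}^{1/r}$ is a non-archimedean norm on $D_r$ by Proposition \ref{prop:3.1} — and that multiplying a positive quantity by $\norm{x}\ge 1$ can only increase it. (Over $k=\mathbb{R}$ the same estimate would follow from $\norm{x-y_0}\le\norm{x}+\norm{y_0}\le(1+\norm{y_0})\norm{x}$ for $\norm{x}\ge 1$, i.e. with $A=1+\norm{y_0}$.)
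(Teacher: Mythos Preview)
Your proof is correct and follows essentially the same route as the paper: fix a point $z_0\in Z(f)$, use the ultrametric inequality to bound ${\rm dist}(x,Z(f))\le\max(\norm{x},\norm{z_0})$, and split according to which term realizes the maximum. The only cosmetic difference is the choice of constant (you take $A=\max(1,\norm{y_0})$ while the paper takes $A=1+\norm{z_0}$); your parenthetical on the archimedean case is a nice addition.
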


\begin{proof}
	Choose $z_0\in Z(f).$ Then ${\rm dist }(x,Z(f))\leq \norm{x-z_0}\leq \max\left( \norm{x},\norm{z_0}\right).$ If $\norm{x}\geq \norm{z_0}$, then ${\rm dist}(x,Z(f))\leq \norm{x}$ and we can take $A=1.$ If $ \norm{x} < \norm{z_0}$ then $\norm{x-z_0}=\norm{z_0}\leq \norm{z_0}\norm{x} $ for $\norm{x}\geq 1$; and as $\norm{z_0}\geq 1$, the lemma is proved if we take $A=1+\norm{z_0}.$
\end{proof}


%

\begin{lem}\label{lem:4.15}
	Suppose $Z(f)$ contains a nonzero element. Then there exists a constant $C>0$ such that 
	\begin{equation}\label{eq:4.4}
	  {\rm dist} (x^{-1}, Z(f^*))\geq C\ \frac{ {\rm dist}(x,Z(f)) }{\norm{x}^2} \qquad(\norm{x}\geq 1).
	\end{equation}
\end{lem}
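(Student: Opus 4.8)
The plan is to translate the desired inequality \eqref{eq:4.4} into a statement about distances under the inversion map $x\mapsto x^{-1}$, using Lemma \ref{lem:4.8} (which identifies $Z(f^*)$ with the image of $Z(f)\setminus\{0\}$ under inversion) and the previous lemma (the identity $\norm{x-y}=\norm{x^{-1}-y^{-1}}\,\norm{x}\,\norm{y}$). First I would fix $x$ with $\norm{x}\ge 1$ and let $y$ range over $Z(f^*)$. By Lemma \ref{lem:4.8}, the nonzero elements of $Z(f^*)$ are exactly $\set{z^{-1}\colon z\in Z(f),\ z\neq 0}$; there may in addition be the point $0\in Z(f^*)$ (when $f^*(0)=0$), which has to be handled separately. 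So for $z\in Z(f)\setminus\{0\}$ the distance identity gives
$$
\norm{x^{-1}-z^{-1}}=\frac{\norm{x-z}}{\norm{x}\,\norm{z}}\ge \frac{{\rm dist}(x,Z(f))}{\norm{x}\,\norm{z}},
$$
so the whole game is to bound $\norm{z}$ from above for the $z$'s that matter. If $\norm{z}$ were bounded uniformly on $Z(f)$ we would be done immediately with that bound as $1/C$; the issue is that $Z(f)$ is typically unbounded.

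The key observation that rescues this is that only $z$ with $\norm{x^{-1}-z^{-1}}$ comparable to ${\rm dist}(x^{-1},Z(f^*))$ are relevant, and such $z$ cannot have $\norm{z}$ too large. Concretely: if $\norm{z}$ is very large then $\norm{z^{-1}}$ is very small, so $\norm{x^{-1}-z^{-1}}\ge\norm{x^{-1}}-\norm{z^{-1}}$ (valid since the two norms are unequal) is bounded below by something like $\norm{x^{-1}}/2$ in the relevant metric sense — more precisely, by ultrametricity, if $\norm{z^{-1}}<\norm{x^{-1}}$ then $\norm{x^{-1}-z^{-1}}=\norm{x^{-1}}$. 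Hence any $z\in Z(f)$ with $\norm{z}>\norm{x}$ contributes $\norm{x^{-1}-z^{-1}}=\norm{x^{-1}}=\norm{x}^{-1}$ to the infimum, and since we may use Lemma \ref{lem:4.13} (which gives ${\rm dist}(x,Z(f))\le A\norm{x}$, so ${\rm dist}(x,Z(f))/\norm{x}^2\le A/\norm{x}$), such contributions already satisfy the claimed bound with $C=1/A$. For the remaining $z$, those with $\norm{z}\le\norm{x}$, the display above gives $\norm{x^{-1}-z^{-1}}\ge {\rm dist}(x,Z(f))/\norm{x}^2$ directly. Taking the infimum over all $z$ (and comparing with the isolated point $0$ if it lies in $Z(f^*)$: then $\norm{x^{-1}-0}=\norm{x}^{-1}\ge {\rm dist}(x,Z(f))/\norm{x}^2$ again by Lemma \ref{lem:4.13}) yields \eqref{eq:4.4} with an explicit constant depending only on the $A$ of Lemma \ref{lem:4.13}.

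I expect the main obstacle to be bookkeeping rather than anything deep: one must carefully split $Z(f^*)$ into the inverted part of $Z(f)\setminus\{0\}$ and the possible extra point $0$, ensure that the ultrametric case analysis ($\norm{z}>\norm{x}$ versus $\norm{z}\le\norm{x}$, with the borderline handled by $\norm{x-z}=\norm{x}$ when $\norm{z}<\norm{x}$) is exhaustive, and confirm that the infimum over an unbounded set is legitimately controlled — i.e. that the relevant $z$'s genuinely lie in the compact-looking region $\norm{z}\le\norm{x}$. A secondary subtlety is the hypothesis "$Z(f)$ contains a nonzero element," which is exactly what makes the inverted set nonempty and Lemma \ref{lem:4.13} applicable to $f$; one should note that if $0$ is the only zero of $f$ then $f^*$ may have empty (or trivial) zero locus and the statement is vacuous or trivial. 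None of this requires new ideas; it is a clean consequence of the two preceding lemmas together with the ultrametric inequality.
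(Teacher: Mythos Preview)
Your proposal is correct and follows essentially the same route as the paper: the same split of $Z(f^*)$ into the inverted nonzero zeros of $f$ and the possible point $0$, the same ultrametric case analysis $\norm{z}>\norm{x}$ versus $\norm{z}\le\norm{x}$, and the same appeal to Lemma~\ref{lem:4.13} to handle the large-$\norm{z}$ contributions and the point $0$, yielding $C=1/A$. The only cosmetic slip is that in the last clause you write $\norm{x}^{-1}\ge {\rm dist}(x,Z(f))/\norm{x}^2$ where the factor $1/A$ from Lemma~\ref{lem:4.13} should appear; this does not affect the argument.
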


\begin{proof} 
	First assume $0\notin Z(f^*)$. Then $Z(f^*)=Z(f^*)\setminus \{0\}\neq \emptyset.$ Then, with $\norm{x}\geq 1$,
	${\rm dist} \left(x^{-1} ,Z(f^*)\setminus \{0\}\right)$ $ =\inf_{0\neq z\in Z(f^*)} \norm{x^{-1}-z}$ 
	$= \inf_{0\neq y\in Z(f)} \norm{x^{-1}-y^{-1}}$
	$= \inf_{0\neq y\in Z(f)}E,$ where $E\colonequals {\norm{x-y}}\norm{x}^{-1}\norm{y}^{-1}.$
	
	We consider cases: (a) $\norm{y} > \norm{x}$ and (b) $ \norm{y}\leq \norm{x}$. 
	In case (a) $\norm{x-y}=\norm{y}$ so that $E=\norm{x}^{-1} = \norm{x}\norm{x}^{-2}\geq A^{-1} \, {\rm dist} (x,Z(f)){\norm{x}^{-2}}$ where $A\geq 1$ is as in lemma \ref{lem:4.13}. In case (b) 
	$E\geq {\norm{x-y}}\norm{x}^{-2}$ so that $\inf E\geq  {\rm dist} (x,Z(f))\norm{x}^{-2}.$
	These give (\ref{eq:4.4}) with $C=1/A$.
	
	If $0\in Z(f^*)$, then ${\rm dist}(x^{-1},Z(f^*)) = \min\left( {\rm dist}(x^{-1}, Z(f^*)\setminus \{0\} ), \norm{x^{-1}} \right).$
	Now $\norm{x}^{-1} = \norm{x} \norm{x}^{-2} \geq C\norm{x}^{-2}{\rm dist} (x, Z(f))$ by lemma \ref{lem:4.13} where $C=1/A$, while ${\rm dist} (x^{-1}, Z(f^*) \setminus \{0\}) \geq C\norm{x}^{-2} {\rm dist} (x, Z(f)),$ by above.
\end{proof}

\begin{proof}[Proof of {\rm H2} for $D_r$]
	We consider two cases: (a) $Z(f)=\emptyset$, (b) $Z(f)\neq \emptyset$. 
	
	{\sl Case (a):} Then $Z(f^*)=\emptyset$ or $\{0\} .$ If $Z(f^*)=\emptyset,$ then there exists a constant $C>0$ such that $\abs{f^*(x)}\geq C>0$ with $\norm{x} \leq 1$. So, $\abs{f^*(y)}=\abs{f(y^{-1})}\norm{y}^{r d} \geq C >0$ for $0<\norm{y}\leq 1,$
	which becomes $\abs{f(x)} \geq C \norm{x}^{r d} \geq C> 0$ for $\norm{x}\geq 1.$
	
	If $Z(f^*)=\{0\}$, then ${\rm dist }(z,Z(f^*)) =\norm{z}$, and $\abs{f^*(y)}\geq C\norm{y}^{\beta}$ with $0<\norm{y}\leq 1$ for constants $C>0$, $\beta \geq 0$ by theorem \ref{thm:H1}. Then
	$\abs{f(y^{-1})}\norm{y}^{r d} \geq C\norm{y}^\beta$ with $\norm{y}\leq 1$ or
	$\abs{f(x)}\geq C\ {\norm{x}^{r d}} {\norm{x}^{-\beta} } \geq {C}\norm{x}^{-\beta}$ with $\norm{x}\geq 1.$
	
	{\sl Case (b):} $Z(f)$ is now nonempty, and hence either $Z(f)=\{0\}$ or $Z(f)$ contains a nonzero element. If $Z(f)=\{0\},$ then  $Z(f^*)=\emptyset$ or $\{0\}$. This comes under case (a), above, and we have $\abs{f(x)} \geq {C}\norm{x}^{-\beta}$ with $\norm{x}\geq 1$
	which gives (a) of H2. 
	
	Suppose $Z(f)$ contains a nonzero element. By H1, there exists constants $C_1>0$, $\alpha\geq0$ such that $\abs{f^*(x^{-1})}\geq C_1\ {\rm dist}(x^{-1}, Z(f^*) )^\alpha$ with $\norm{x}\geq 1.$ So by lemma \ref{lem:4.15}, for $C_2=C_1\,C^\alpha$, 
	$\abs{f(x)} \geq C_2 \,{ {\rm dist} (x,Z(f))^\alpha }{\norm{x}^{-2\alpha} }$ for $\norm{x}\geq 1$,
	proving (b) of H2.
\end{proof}

\subsection{Criterion for a polynomial not to be rapidly decreasing on a set $S$}
 In \cite{igusa1978lectures} Igusa and Raghavan develop what is essentially a criterion for a polynomial on an real vector space \emph{not to be rapidly decreasing} on a set of vectors of norm $\geq1$. In this section we generalize that method to all local fields, introducing several polynomials in the criterion.
\begin{lem}\label{lem:4.15a}
	Let ${\bf f}\colon V\to k^r$ be a polynomial map and $d$ the maximum of the degrees of its components. Then there exists a constant $C>0$ such that for all $x,y\in V$ with $\norm{x}\geq1 $,
	\[
		\norm{{\bf f} (x)- {\bf f}(y)} \leq C\norm{x}^{d-1} \max_{0\leq r\leq d}\left( \norm{x-y}^r\right).
	\] 
\end{lem}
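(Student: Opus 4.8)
The plan is to reduce the estimate for the vector-valued map $\mathbf{f}$ to a scalar estimate for each component $f_j$, and then to prove that scalar estimate by expanding $f_j$ in a Taylor series around $x$ (a polynomial identity, valid over any field, with the ``derivatives'' replaced by the Hasse--Schmidt / divided-power differentials so that nothing about the characteristic intervenes). Concretely, since $\norm{\mathbf{f}(x)-\mathbf{f}(y)} = \max_j \abs{f_j(x)-f_j(y)}$, it suffices to produce, for each $j$, a constant $C_j>0$ with $\abs{f_j(x)-f_j(y)} \le C_j \norm{x}^{d-1}\max_{0\le r\le d}\norm{x-y}^r$ for $\norm{x}\ge 1$, and then take $C = \max_j C_j$.

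For a fixed component $f = f_j$ of degree $d_j \le d$, write $h = y-x$ and expand $f(x+h) = f(x) + \sum_{\abs\beta \ge 1} (\Delta^\beta f)(x)\, h^\beta$, where the sum is finite (running over $1 \le \abs\beta \le d_j$) and the coefficients $\Delta^\beta f$ are polynomials in $x$ of degree at most $d_j - \abs\beta \le d - \abs\beta$, exactly as in the divided-difference formalism already used in the proof of the lemma in \S2.3 (following \cite{serre2009lie}, pp.~67--75). In the non-archimedean case the coefficients of $f$ lie in a fixed finite extension, and after clearing denominators we may bound $\abs{\Delta^\beta f(x)} \le A_\beta \norm{x}^{d-\abs\beta} \le A_\beta \norm{x}^{d-1}$ for $\norm x \ge 1$, using that a polynomial of degree $e$ satisfies $\abs{P(x)} \le (\text{const})\norm{x}^{e}$ on $\norm x \ge 1$; in the archimedean case this is the standard polynomial growth bound. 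Hence, grouping by $r = \abs\beta$ and using $\abs{h^\beta} \le \norm{x-y}^{r}$ (non-archimedean) or the analogous $\norm{\cdot}$-estimate over $\mathbb{R}$,
\[
  \abs{f(x)-f(y)} \le \sum_{r=1}^{d_j} \Big(\sum_{\abs\beta = r} A_\beta\Big) \norm{x}^{d-1}\norm{x-y}^{r}
  \le C_j \,\norm{x}^{d-1} \max_{0\le r\le d}\norm{x-y}^{r},
\]
with $C_j$ the sum of all the $A_\beta$. Taking the maximum over $j$ gives the claim.

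The only point requiring a little care — and the one I expect to be the main (if minor) obstacle — is the uniformity of the constant across all choices of norm and across characteristic $p$: one must make sure the divided-difference coefficients $\Delta^\beta f$ are genuinely polynomial (not requiring division by $\beta!$) and that the ambient absolute value on the coefficient field is handled correctly when $k$ has positive characteristic, so that the bound $\abs{\Delta^\beta f(x)} \le A_\beta\norm{x}^{d-\abs\beta}$ holds with an $A_\beta$ depending only on $f$ and the chosen norm. This is exactly the estimate established in the lemma of \S2.3, so it carries over verbatim; equivalence of norms on the finite-dimensional space $V$ then absorbs the ``in any norm'' clause into the constant.
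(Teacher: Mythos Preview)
Your proof is correct. Both your approach and the paper's reduce first to a single component $f$; from there the paper proceeds by induction on the degree of monomials, writing $f = x_i g$ with $\deg g = d-1$ and using the identity
\[
x_i g(x) - y_i g(y) = x_i\bigl(g(x)-g(y)\bigr) + (x_i-y_i)\bigl(g(y)-g(x)\bigr) + (x_i-y_i)g(x),
\]
then bounding each of the three terms via the inductive hypothesis and the trivial bound $|g(x)|\le C'\norm{x}^{d-1}$. Your route---expand $f(x+h)$ via Hasse--Schmidt derivatives and bound each $(\Delta^\beta f)(x)\,h^\beta$ using $\deg \Delta^\beta f \le d-|\beta|$---is an ``all-at-once'' version of the same idea: you are in effect carrying out the paper's induction in closed form. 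Your approach makes the degree bookkeeping explicit and handles positive characteristic transparently via the divided-difference formalism; the paper's monomial induction is marginally more elementary in that it never needs to name the $\Delta^\beta$. Neither has any real advantage over the other here. Two small remarks: the phrase ``after clearing denominators'' is unnecessary, since the $\Delta^\beta$ are defined via integer binomial coefficients and introduce no denominators; and the coefficients of $f$ already lie in $k$ itself, so no ``fixed finite extension'' enters.
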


\begin{proof}
	It is enough to prove this for $r=1$, ${\bf f}=f$. The estimate is compatible with addition in $f$ and so we may assume $f$ to be a monomial of degree $d$ in some coordinate system on $V$. Assume the result for all monomials of degree $d-1$. Then $f=x_ig$, where $g$ is a monomial of degree $d-1$. We have
	\[
		x_ig(x)-y_i g(y) = x_i(g(x)-g(y)) +(x_i-y_i)(g(y)-g(x))+(x_i-y_i)g(x)
	\]
	and the estimate is obvious for each of the three terms.
\end{proof}

\begin{prop} \label{prop:4.16}
	Let $S\subseteq V$ be a set with $\norm{x}\geq  1 $ for all $x \in S$. Let $g$ be polynomial on $V$.  If $Z(g)= \emptyset$, we have 
	\[
		\abs{g(x)}\geq \frac{C}{\norm{x}^{\gamma} }\qquad (\norm{x}\geq 1) 
	\]
	for some $C>0$, $\gamma \geq 0$. Suppose $Z(g)\neq \emptyset $ and
	suppose that there exist polynomials $f_i\colon V\rightarrow k$, $i=1,\ldots r$, and a constant $b >0$ such that $\max{\abs{f_i(x) -f_i(y)} }\geq b >0 $ for all $x \in S$, $y\in Z(g)$. 
	Then there exist constants $C>0$ and $\gamma \geq0$ such that 
	\begin{equation}
	\lvert g(x) \rvert \geq \frac{C}{\lVert x \rVert^\gamma } \qquad (x\in S).
	\end{equation}
\end{prop}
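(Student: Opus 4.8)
The plan is to handle the two alternatives of the statement separately. If $Z(g)=\emptyset$ there is nothing new to prove: part~(a) of Theorem~\ref{thm:H2} already supplies constants $C>0$, $\gamma\ge 0$ with $\abs{g(x)}\ge C\norm{x}^{-\gamma}$ for every $x$ with $\norm{x}\ge1$, hence in particular for every $x\in S$. So the content lies in the case $Z(g)\neq\emptyset$, and the strategy there is to use the separation hypothesis together with Lemma~\ref{lem:4.15a} to bound $\mathrm{dist}(x,Z(g))$ below by a negative power of $\norm{x}$ on $S$, and then to feed this into part~(b) of Theorem~\ref{thm:H2}.

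Assume then $Z(g)\neq\emptyset$; we may discard the trivial case $S=\emptyset$, and the hypothesis $\max_i\abs{f_i(x)-f_i(y)}\ge b>0$ forces the $f_i$ not all constant, so $d\colonequals\max_i\deg f_i\ge1$. Fix $x\in S$; I would bound $\norm{x-y}$ below, uniformly over $y\in Z(g)$. For $y\in Z(g)$ with $\norm{x-y}\le1$, Lemma~\ref{lem:4.15a} applied to the polynomial map $\mathbf f=(f_1,\dots,f_r)$ (and using $\norm{x}\ge1$) specializes to a Lipschitz-type estimate $\norm{\mathbf f(x)-\mathbf f(y)}\le C_0\norm{x}^{d-1}\norm{x-y}$; since, after absorbing the norm-equivalence constant on $k^r$ into $C_0$, the left-hand side dominates $\max_i\abs{f_i(x)-f_i(y)}\ge b$, we obtain $b\le C_0\norm{x}^{d-1}\norm{x-y}$, i.e. $\norm{x-y}\ge(b/C_0)\,\norm{x}^{-(d-1)}$. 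For $y\in Z(g)$ with $\norm{x-y}>1$ the inequality $\norm{x-y}>1\ge\min(1,b/C_0)\,\norm{x}^{-(d-1)}$ is automatic because $\norm{x}\ge1$. Putting $c\colonequals\min(1,b/C_0)>0$ and taking the infimum over $y\in Z(g)$ gives $\mathrm{dist}(x,Z(g))\ge c\,\norm{x}^{-(d-1)}$ for every $x\in S$.

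To finish, I would invoke part~(b) of Theorem~\ref{thm:H2}: since $Z(g)\neq\emptyset$ and $\norm{x}\ge1$, there are constants $C_1>0$ and $\alpha,\beta\ge0$ with $\abs{g(x)}\ge C_1\,\mathrm{dist}(x,Z(g))^{\alpha}\norm{x}^{-\beta}$, and inserting the distance bound yields $\abs{g(x)}\ge C_1c^{\alpha}\norm{x}^{-(\alpha(d-1)+\beta)}$ for all $x\in S$ --- the asserted inequality with $C=C_1c^{\alpha}$ and $\gamma=\alpha(d-1)+\beta$. I do not foresee a genuine obstacle: once Theorem~\ref{thm:H2} and Lemma~\ref{lem:4.15a} are in hand this is a short chain of inequalities. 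The only point that needs attention is using Lemma~\ref{lem:4.15a} precisely in the regime $\norm{x-y}\le1$, where one needs the estimate carrying a factor $\norm{x-y}^{1}$ (so that it tends to $0$ as $y\to x$), together with keeping the norms on $k$ and on $k^r$ consistent so that $\norm{\mathbf f(x)-\mathbf f(y)}$ really controls $\max_i\abs{f_i(x)-f_i(y)}$.
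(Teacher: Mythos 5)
Your proof is correct, and it uses exactly the same ingredients as the paper — Lemma~\ref{lem:4.15a} and Theorem~\ref{thm:H2} — but arranges the chain of inequalities in the opposite order, which turns out to be slightly cleaner. The paper first uses H2(b) to bound $\mathrm{dist}(x,Z(g))$ from \emph{above} in terms of $\abs{g(x)}$, substitutes this into the Lipschitz/separation estimate $0<b\le C\norm{x}^{d-1}\max_{1\le r\le d}\mathrm{dist}(x,Z(g))^r$, and then must split into the two cases $\mathrm{dist}\le 1$ and $\mathrm{dist}>1$ (the $\max$ over $r$ collapses differently in each) before recombining via a $\max(\abs{g},\abs{g}^d)$ trick. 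You instead use the Lipschitz estimate together with the separation to bound $\mathrm{dist}(x,Z(g))$ from \emph{below} by $c\norm{x}^{-(d-1)}$ on $S$, which handles the $\norm{x-y}\le 1$ and $\norm{x-y}>1$ alternatives in one line, and then feed this directly into H2(b). This avoids the case analysis on $\mathrm{dist}$ entirely, and the resulting exponent $\gamma=\alpha(d-1)+\beta$ is in fact a bit sharper than the $\gamma=\alpha(d-1)+d\beta$ that the paper's combined bound produces (though of course the proposition only asserts some $\gamma\ge 0$, so both are acceptable).

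One small point you flagged deserves confirmation: Lemma~\ref{lem:4.15a} is \emph{printed} with $\max_{0\le r\le d}\norm{x-y}^r$, which would be useless here since for $\norm{x-y}\le 1$ that maximum is $1$. But the lemma's own inductive proof only produces factors $\norm{x-y}^r$ with $r\ge 1$ (the base case is a coordinate, giving $\norm{x-y}$, and each term in the telescoping identity carries at least one factor of $\norm{x-y}$), and the paper's own proof of Proposition~\ref{prop:4.16} invokes the lemma with $\max_{1\le r\le d}$. So the ``$0$'' is a typo for ``$1$'', and your reading is the correct one; your specialization to the Lipschitz bound $\norm{\mathbf f(x)-\mathbf f(y)}\le C_0\norm{x}^{d-1}\norm{x-y}$ for $\norm{x-y}\le 1$ is justified.
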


\begin{proof} 
	The first statement is a) of H2. We now assume $Z(g)\neq \emptyset$. We identify $V\simeq k^m$, and work in coordinates. Set $d\colonequals\max_{i}(\deg(f_i))$. In what follows, $C_1,\ C_2,\ldots,$ are constants $>0$. 
	
	For all $x\in S$ and $y\in Z(g)$, by lemma \ref{lem:4.15a} for some constant $C>0$, we have $0<b\leq \max_{1\leq i\leq r}\abs{f_i(x)-f_i(y)}\leq C\norm{x}^{d-1}\max_{1\leq r\leq r}\norm{x-y}^r$
	for all $x\in S$ , $y\in Z(g)$. 

	Choose $y \in Z(g)$ such that $ \lVert x - y\rVert = \text{dist} (x, Z(g)).$
	Then for all $x\in S$, we have $$0<b \leq C_1 \lVert x \rVert^{d-1} \max_{1\leq r \leq d}\left( \text{dist}(x, Z(g))^r \right).$$
	
	We consider two cases:
		(a) $\text{dist}(x,Z(g)) \leq 1,$ so the maximum above is $\text{dist}(x,Z(g))$, and
		(b) $\text{dist}(x,Z(g)) > 1,$ so the maximum is $\text{dist}(x,Z(g))^d$.
	
	By H2 there exist constants $C_2> 0,$ $\alpha, \beta \geq 0$ such that
	$\abs{g(x)} \geq$\\ $ C_2\,{\text{dist}(x,Z(g))^\alpha }\norm{x}^{-\beta}, $
	so 
	$ \text{dist}(x,Z(g)) \leq  C_3\, \lvert g(x)\rvert^{1/\alpha} \lVert x \rVert^{\beta/\alpha} .$
	In case (a), $0< b\leq C_3\abs{g(x)}^{1/\alpha}\norm{x}^{\frac{\beta}{\alpha}+(d-1) }$, and in case (b), $0< b\leq C_4\abs{g(x)}^{d/\alpha}\norm{x}^{\frac{d \beta}{\alpha}+(d-1) }$.
	So in both cases, with $\delta = \frac{d\beta}{\alpha}+(d-1)$, one has $$0<b\leq C_5\, \lVert x \rVert^{\delta} \max(\lvert g(x) \rvert^{1/\alpha} , \lvert g(x) \rvert^{d/\alpha} ).$$
	Hence, $\max \left(\abs{g(x)} ,\abs{g(x)}^d\right) \geq {C_6}\,{\norm{x}^{-\delta \alpha}},$
	giving in all cases $\abs{g(x)}\geq C_7\,{\norm{x}^{-\delta \alpha} }$ with $x\in S.$
\end{proof}

\subsection{Lower bounds of $\norm{\nabla_r f}$ on stably non-critical level sets}

Let $V$ and ${\bf f}=(f_1,\ldots f_r)\colon V\to k^r$ ($r\leq m = \dim_k V$) be as usual. Let $C({\bf f})$ be the critical set of ${\bf f}$, and $CV({\bf f})={\bf f}(C({\bf f}))$ have their usual meanings.
Write $W=CV({\bf f})$. We assume that the closure $\overline{W}$, in the $k$-topology of $k^r$, of $W$ is a proper subset of  $k^r$. Our assumption is equivalent to assuming that stably non-critical values of ${\bf f}$ exist, which is true in characteristic zero (see \S 2.2). Let $L_{{\bf c}}$, $\nabla_r {\bf f}$, and $\partial_J {\bf f}$ be defined as in \S 2.1.

If $\omega \subset k^r\setminus \overline{W}$ is a compact set, then there exists $b>0$ such that $\norm{u-v}\geq b >0$ for $u\in \omega, v\in \overline{W} $.
This means $\max_i \abs{f_i(x) -f_i(y) } \geq b > 0,$ with ${\bf c}\in \omega, x\in L_{\bf c},\; y\in C({\bf f}).$

\begin{prop}
	Let $\omega \subset k^r$ be an open set whose closure consists entirely of non critical values of ${\bf f}=(f_1,\ldots, f_r)$. For ${\bf c}\in \omega$, let $L_{\bf c}$ be defined as above. Then there exist constants, $C, \gamma >0 $ such that 
	\begin{equation}
	\norm{\nabla_r {\bf f}(x)}\geq \frac{C}{\norm{x}^\gamma } \qquad (x\in L_{\bf c}, {\bf c}\in \omega, \norm{x}\geq 1)
	\end{equation}
\end{prop}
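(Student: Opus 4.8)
The plan is to deduce this proposition directly from Proposition~\ref{prop:4.16} by applying it with a careful choice of the set $S$, the polynomial $g$, and the auxiliary polynomials $f_i$. First I would fix a compact exhaustion: since $\omega$ is open with $\overline{\omega}$ consisting entirely of non-critical values, and (by hypothesis) $\overline{W}=\overline{CV({\bf f})}$ is a proper closed subset of $k^r$, the set $\overline{\omega}$ is disjoint from $\overline{W}$. It suffices to prove the bound for each ${\bf c}$ in a compact neighborhood, and by a covering/gluing argument it is enough to work with ${\bf c}$ ranging over a compact set $\omega_0\subset\omega$. As recorded in the paragraph immediately preceding the statement, compactness then gives a single $b>0$ with $\max_i|f_i(x)-f_i(y)|\ge b$ for all ${\bf c}\in\omega_0$, $x\in L_{\bf c}$, $y\in C({\bf f})$.

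Next I would set $S\colonequals\{x\in L_{\bf c}: {\bf c}\in\omega_0,\ \norm{x}\ge 1\}$, so that $\norm{x}\ge 1$ for all $x\in S$ as required by Proposition~\ref{prop:4.16}. The subtlety is that $\norm{\nabla_r{\bf f}}$ is not itself a polynomial, but the individual Jacobian minors $\partial_J{\bf f}$ are polynomials on $V$, and $\norm{\nabla_r{\bf f}(x)}=\max_J\norm{\partial_J(x)}$. So the strategy is to apply Proposition~\ref{prop:4.16} with $g=\partial_J$ for each of the finitely many index sets $J$, but restricted to the part of $S$ where $\partial_J$ is the ``dominant'' minor. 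Concretely: for each $J$, let $S_J$ be the set of $x\in S$ at which $\norm{\partial_J(x)}=\norm{\nabla_r{\bf f}(x)}$; then $S=\bigcup_J S_J$ (finite union). The zero locus $Z(\partial_J)$ is contained in $C({\bf f})$ (a point where a given $r\times r$ minor of the Jacobian vanishes need not be critical, but more to the point we want the \emph{separation} hypothesis), so for $x\in S_J\subseteq S$ and $y\in Z(\partial_J)$ we have... here lies the gap: $Z(\partial_J)$ is generally larger than $C({\bf f})$, so the separation $\max_i|f_i(x)-f_i(y)|\ge b$ proved for $y\in C({\bf f})$ does not immediately transfer to $y\in Z(\partial_J)$.

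To close that gap I would argue as follows. If $x\in S_J$ and $y\in Z(\partial_J)$, then $\norm{\nabla_r{\bf f}(x)}=\norm{\partial_J(x)}$, and I want a lower bound on this quantity. The point is that I do not need $y\in C({\bf f})$; I need the hypothesis of Proposition~\ref{prop:4.16}, namely that $\{f_i\}$ separate $S_J$ from $Z(\partial_J)$. But if $y\in Z(\partial_J)$ is such that ${\bf f}(y)$ is close to $\omega_0$, then... Actually the cleanest fix is to apply Proposition~\ref{prop:4.16} not with $g=\partial_J$ but after first reducing to the situation on $L_{\bf c}$. Alternatively — and this is the route I would actually take — observe that the whole argument of Proposition~\ref{prop:4.16} used only that $Z(g)$ is separated from $S$ by the $f_i$'s; so I would instead run that same argument with $g=\partial_J$ restricted to $L_{\bf c}$, using that for $x\in L_{{\bf c},J}$ the relevant nearby zeros of $\partial_J$ that matter are those in $C({\bf f})$ because $L_{\bf c}$ is smooth (${\bf c}$ non-critical) so $Z(\partial_J)\cap L_{\bf c}$ consists exactly of the points of $L_{\bf c}$ where that minor degenerates while another does not — and on all of $L_{\bf c}$ we have $\nabla_r{\bf f}\ne 0$. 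In other words, the real statement is: on $L_{\bf c}$, the function $\max_J\norm{\partial_J}$ never vanishes, and we want a polynomial lower bound in $\norm{x}$. I would therefore take $g$ to be a single polynomial that captures this — e.g. use a definite form from Proposition~\ref{prop:3.1}: let $\nu$ be a definite homogeneous polynomial on $k^{\binom{m}{r}}$ of suitable degree and set $g(x)\colonequals\nu(\nabla_r{\bf f}(x))$, a genuine polynomial on $V$ with $Z(g)\cap L_{\bf c}=\emptyset$ and, crucially, $|g(x)|=\norm{\nabla_r{\bf f}(x)}^{\deg\nu}\cdot(\text{unit})$ up to constants because $|\nu(w)|^{1/\deg\nu}$ is a norm on $k^{\binom{m}{r}}$, hence comparable to $\max_J|w_J|$.

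With that reduction in hand the proof is immediate: $g=\nu\circ\nabla_r{\bf f}$ is a polynomial on $V$; $Z(g)\supseteq$ (nothing meeting $L_{\bf c}$ for ${\bf c}\in\omega_0$), and if $Z(g)=\emptyset$ we are done by part (a) of H2; if $Z(g)\ne\emptyset$, then for $x\in S$ and $y\in Z(g)$ we have $g(y)=0$, so $\nabla_r{\bf f}(y)=0$, so $y\in C({\bf f})$, whence by the separation recorded above $\max_i|f_i(x)-f_i(y)|\ge b>0$; thus Proposition~\ref{prop:4.16} applies and gives $|g(x)|\ge C\norm{x}^{-\gamma'}$ on $S$. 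Since $|g(x)|=|\nu(\nabla_r{\bf f}(x))|\le C'\norm{\nabla_r{\bf f}(x)}^{\deg\nu}$ (the norm property of $|\nu|^{1/\deg\nu}$ from Proposition~\ref{prop:3.1}), we get $\norm{\nabla_r{\bf f}(x)}\ge C''\norm{x}^{-\gamma'/\deg\nu}$ for $x\in S$, i.e. for all $x\in L_{\bf c}$, ${\bf c}\in\omega_0$, $\norm{x}\ge1$. Finally, patching over a cover of $\omega$ by countably many such $\omega_0$ — or just noting the statement is local in ${\bf c}$ and the constants can be taken uniform on each compact piece — yields the proposition as stated. The main obstacle, as flagged, is precisely the identification $Z(\nu\circ\nabla_r{\bf f})\cap L_{\bf c}=\emptyset$ together with $Z(\nu\circ\nabla_r{\bf f})\subseteq C({\bf f})$ globally (which needs that $\nu$ is \emph{definite}, so $g$ vanishes only where $\nabla_r{\bf f}$ vanishes identically), since this is what lets the separation hypothesis of Proposition~\ref{prop:4.16} be invoked; everything else is bookkeeping.
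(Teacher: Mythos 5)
Your final argument — replace $\nabla_r{\bf f}$ by the scalar polynomial $g=\nu\circ\nabla_r{\bf f}$ for a definite form $\nu$ from Proposition~\ref{prop:3.1}, note that definiteness forces $Z(g)=C({\bf f})$ so the separation hypothesis of Proposition~\ref{prop:4.16} applies, and then convert $|g|$ back to $\norm{\nabla_r{\bf f}}$ using the norm property of $|\nu|^{1/\deg\nu}$ — is exactly the paper's proof, and you correctly flag and close the gap that would arise from naively applying Proposition~\ref{prop:4.16} to the individual minors $\partial_J$. The detour through the $S_J$-decomposition is dead weight but does no harm.
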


\begin{proof}	
	We write $(y_J)$ for the coordinates on $k^{\binom{m}{r}}$ and select a definite homogeneous form $\nu$, which is positive definite of degree 2 if $k$ archimedean, and of degree $R$ on $k^{\binom{m}{r}}$ where $R$ is any integer $\geq 2$ such that $R^2\geq \binom{m}{r},$ with the property that $\abs{\nu(y)}^{1/R}$ is a norm on $k^{\binom{m}{r}},$ if $k$ is non-archimedean. Then $\nu(\nabla_r {\bf f}(x)) =0$ if and only if $\nabla_r {\bf f}(x)=0 $, i.e., if and only if $x$ is a critical point of ${\bf f}.$ Let $g(x)=\nu(\nabla_r {\bf f}(x)).$ Then $Z(g)$ is the set of critical points of ${\bf f}$. Suppose first that $Z(g)\neq \emptyset.$ Now there exists $b>0$ such that 
	$$\norm{u-v}=\max_{1\leq i\leq r} \abs{u_i - v_i} \geq b>0 \qquad (u\in \omega, v\in \overline{W})$$
	Hence, as ${\bf f}(x)\in \omega$ for $x\in L_{\bf c}$ $(c\in \omega)$, and ${\bf f}(y)\in \overline{W}$ for $y\in Z(g)$, $\norm{{\bf f}(x)-{\bf f}(y)}\geq b> 0.$
	So by proposition \ref{prop:4.16} there exist constants $C>0,$ $\delta \geq 0$ such that 
	\[
	  \abs{\nu(\nabla_r {\bf f}(x))}=\abs{g(x)}\geq \frac{C}{\norm{x}^\delta}\qquad (x\in L_{\bf c},\ {\bf c}\in \omega ,\norm{x}\geq 1).
	\]
	But $\nu$ is homogeneous of degree $d$ ($d=2$ for archimedean and $R$ for non-archimedean $k$) and definite. So there exist constants $C_1,C_2>0$ such that $C_1\norm{\nabla_r {\bf f}(x)}^d\leq \abs{\nu(\nabla_r {\bf f}(x))} = \abs{g(x)} \leq {C_2}{\norm{\nabla_r {\bf f}(x)}^d}.$
	So for suitable $C>0$, $\gamma\geq 0$, we have $\norm{\nabla_r {\bf f}(x)} \geq {C}\norm{x}^{-\gamma}.$
	The case $Z(g)=\emptyset$ is taken care of by the first statement of proposition \ref{prop:4.16}.
\end{proof}
\begin{rem}\label{rem:4.15}
	We cannot make $\gamma=0$ in all cases. For instance, let $\text{char.}\,k=0$ and $r=1$, $f(x,y,z)=x^2 z^2+y^3z$ and $c=-1$.  Consider $x_n = n, \ z_n = \frac{1}{n}, \ y_n = -(2n)^{1/3}.$
	Then $F(x_n,y_n,z_n)= 1-2=-1,$ $\frac{\partial F}{\partial X} (x_n,y_n, z_n)  = 2x_n z^2_n \to 0$, and 
	$\frac{\partial F}{\partial Y} (x_n,y_n, z_n) = 3y_n^2 z_n \to 0,$
	$\frac{\partial F}{\partial Z}(x_n, y_n, z_n) = 2x^2_n z_n +y^3_n= 2n-2n=0.$ But $\norm{(x_n,y_n,z_n)}=n$, $\norm{\nabla f (x_n,y_n,z_m)} \sim \text{Const}\cdot \frac{1}{n^{1/3}}$. So $\gamma \geq 1/3$. We do not know the minimal value of $\gamma$.
\end{rem}
\section{Proof of temperedness of canonical measures on stably non-critical level sets}
\subsection{Consequences of Krasner's lemma.}The well-known lemma of Krasner \cite{artin67} has an important consequence (corollary \ref{cor:5.3}). Let $k$ be a local field of arbitrary characteristic and $K$ its algebraic closure. The following lemma must be well-known, but we prove it in this form.
\begin{lem}
	We can find a countable family $\{k_n\}$ of finite extensions of $k$ with the property that any finite extension of $k$  is contained in one of the $k_n$. In particular $K=\bigcup_n k_n$.
\end{lem}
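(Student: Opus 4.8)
The plan is to deduce the lemma from the classical finiteness theorem for extensions of a local field and then to enumerate all finite sub-extensions of $k$ inside $K$. The archimedean case is trivial: if $k=\mathbb{R}$ then $K=\mathbb{C}$ and we may take $k_n=\mathbb{C}$ for every $n$. So assume from now on that $k$ is non-archimedean, with ring of integers $\mathcal{O}_k$, a uniformizer $\pi$, and finite residue field.

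\emph{Step 1: for each integer $n\ge 1$ there are only finitely many subfields $L\subseteq K$ with $[L:k]\le n$.} For \emph{separable} $L$ this is the standard consequence of Krasner's lemma \cite{artin67}: writing $L=k(\alpha)$ with $\alpha\in\mathcal{O}_K$ (replace $\alpha$ by $\pi^N\alpha$), the minimal polynomial of $\alpha$ lies in the compact space of monic polynomials of degree $\le n$ over $\mathcal{O}_k$; the roots of a monic polynomial depend continuously on its coefficients, and Krasner's lemma shows that the field generated by a root of a separable polynomial is unchanged under a small perturbation of the coefficients, while a bound on the different (equivalently, on wild ramification) for extensions of bounded degree confines such $L$ to a finite set. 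For an arbitrary finite extension $L/k$, put $L_s=L\cap k^{\mathrm{sep}}$; then $L/L_s$ is purely inseparable, and since $[F:F^p]=p$ for every local field $F$ of characteristic $p$, the purely inseparable extensions of $L_s$ form a single chain $L_s\subset L_s^{1/p}\subset L_s^{1/p^2}\subset\cdots$, so that $L=L_s^{1/p^{j}}$ with $p^{j}=[L:L_s]\le n$. As $L_s$ ranges over a finite set (by the separable case) and $j$ over a bounded set, $L$ ranges over a finite set.

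\emph{Step 2: conclusion.} Let $\mathcal{F}$ be the set of all subfields of $K$ that are finite over $k$. By Step 1, $\mathcal{F}=\bigcup_{n\ge 1}\{L\in\mathcal{F}\colon [L:k]\le n\}$ is a countable union of finite sets, hence countable; enumerate it as $\mathcal{F}=\{k_n\}_{n\ge 1}$. Any finite extension of $k$ is algebraic, hence admits a $k$-embedding into $K$, and its image lies in $\mathcal{F}$; thus every finite extension of $k$ is contained in one of the $k_n$. Finally $K=\bigcup_n k_n$: the inclusion $\supseteq$ is clear, and conversely every $x\in K$ is algebraic over $k$, so $k(x)\in\mathcal{F}$ equals some $k_n$ and $x\in k_n$.

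The only genuinely non-formal ingredient is the finiteness of degree-$n$ extensions in Step 1, which is the classical Krasner-lemma argument and is the place where care is needed; in positive characteristic the extra point is exactly that purely inseparable extensions of a local field are totally ordered by inclusion, which makes their contribution harmless. Everything after Step 1 is bookkeeping.
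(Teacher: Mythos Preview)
Your Step 1 asserts that a non-archimedean local field has only \emph{finitely} many extensions of each bounded degree. This is true in characteristic $0$ but false in characteristic $p>0$: over $k=\mathbb{F}_p((t))$ the Artin--Schreier extensions $T^p-T-c=0$ give infinitely many pairwise non-isomorphic separable degree-$p$ extensions as $c$ ranges over representatives of $k/\wp(k)$ with poles of increasing order. The paper itself points this out in the remark immediately following the lemma (for $p=2$). The sentence you wrote to justify finiteness --- ``a bound on the different (equivalently, on wild ramification) for extensions of bounded degree'' --- is exactly what breaks: in equal characteristic there is no such uniform bound on the different, because the wild ramification filtration can have arbitrarily large jumps even when the degree is fixed.

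The paper's argument avoids this trap by proving only \emph{countability} of separable extensions of each degree: the set $S_n$ of monic separable irreducible polynomials of degree $n$ over $k$ is a separable metric space, and Krasner's lemma shows that the map $f\mapsto K(f)$ (splitting field) is locally constant on $S_n$, hence has countable image. The purely inseparable part is then handled, as you do, by iterated $p$-th roots. Your Step 2 and your treatment of the purely inseparable case are fine; to repair the proof you need only downgrade the conclusion of Step 1 from ``finite'' to ``countable'' and supply the local-constancy/separable-metric-space argument in place of the non-existent different bound.
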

\begin{proof}
	We first work with separable extensions of fixed degree $n$ over $k$. Let $S_n$ be the set of monic, irreducible and separable elements of $k[X]$ of degree $n$. Then it follows from Krasner's lemma that if $f\in S_n$, there is an $\varepsilon=\varepsilon (f)>0$ with the following property: if $g$ is monic and $||f-g||<\varepsilon$, then $g\in S_n$ and $K(f)=K(g)$ in $K$, where $K(h)$ denotes the splitting field of $h$. Since $S_n$ is a separable metric space, it follows that there are at most a countable number of these splitting fields, and any separable extension of degree $n$ over $k$ is contained in one of these. Let us enumerate these splitting fields as $\{k_{nj}\} (j=1,2,\dots)$. If $k$ has characteristic $0$ we are already finished. Suppose $k$ has characteristic $p>0$. Let $F(x\mapsto x^p)$ be the Frobenius automorphism of $K$. Define the extension $k_{njr}=F^{-r}(k_{nj})$ for $r=1,2,\dots$, which are clearly finite over $k$. Clearly, any finite extension of $k$ of finite degree is contained in one of the $k_{njr}$.
\end{proof}

\begin{rem}
	If $k$ has characteristic $0$, then there are only a finite number of extensions of fixed degree $n$. But in prime characteristic this is not true: the field $k=F_2[[X]][X^{-1}]$ of Laurent series in $X$ with $F_2$ a finite field of characteristic $2$ has a countably infinite number of separable quadratic extensions. Indeed, the extensions defined by $T^2-T-c=0$ are distinct for infinitely many values of $c$.
\end{rem}

\begin{cor}\label{cor:5.3}
	If $M$ is an affine subvariety of some  $A_K^n$ and $M(k')$ is countable for all finite extensions $k'$ of $k$, then $M$ is finite.
\end{cor}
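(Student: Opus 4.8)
The plan is to obtain a contradiction from the assumption that $M$ is infinite, by playing the countability hypothesis against the fact that a positive-dimensional affine variety over an algebraically closed field as large as $K$ has uncountably many points.

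First I would set up the covering $M = \bigcup_n M(k_n)$. Writing out the finitely many polynomials that cut out $M$ in $A_K^n$, their coefficients lie in a subfield of $K$ that is finitely generated, hence finite, over $k$; by the preceding lemma this subfield sits inside some $k_N$, so $M$ may be regarded as defined over $k_N$. Any point of $M$ has coordinates in $K=\bigcup_n k_n$, and since there are only finitely many coordinates and the family $\{k_n\}$ absorbs every finite extension of $k$, each point of $M$ lies in $M(k_n)$ for some $n$. Thus $M = M(K) = \bigcup_n M(k_n)$. Each $k_n$ is a finite extension of $k$, so by hypothesis $M(k_n)$ is countable; a countable union of countable sets is countable, and therefore $M$ is countable.

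Next comes the geometric step. Suppose, for contradiction, that $M$ is infinite; then $d\colonequals\dim M\geq 1$, and by Noether normalization there is a finite surjective morphism $M\to A_K^d$. A finite morphism is surjective on $K$-rational points (the fiber over a $K$-point is a nonempty finite $K$-scheme, hence has a $K$-point since $K$ is algebraically closed), so $\abs{M}=\abs{M(K)}\geq \abs{A^d(K)}=\abs{K}$. But $k$, being a non-discrete locally compact field, is uncountable (its cardinality is at least that of the continuum), and the algebraic closure of an infinite field has the same cardinality as the field, so $\abs{K}=\abs{k}$ is uncountable. This contradicts the countability of $M$ established above, so in fact $d=0$ and $M$ is finite.

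The only point that warrants care is the last geometric estimate: one must know that a positive-dimensional affine $K$-variety has cardinality $\abs{K}$. This is standard — Noether normalization produces a finite surjection onto $A^d_K$, finite surjections are surjective on closed (equivalently $K$-rational) points, and $\abs{A^d(K)}=\abs{K}^d=\abs{K}$; alternatively, an infinite variety dominates an affine line, and a line over $K$ has exactly $\abs{K}$ points. One should also check the harmless cardinality bookkeeping $\abs{K}=\abs{k}>\aleph_0$, and, when applying the countability hypothesis, confirm that each $M(k_n)$ really is among the sets $M(k')$ with $k'$ a finite extension of $k$ — which holds because every $k_n$ is such an extension.
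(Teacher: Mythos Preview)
Your argument is correct and follows the same route as the paper: use Lemma~5.1 to write $M(K)=\bigcup_n M(k_n)$ as a countable union of countable sets, then invoke the uncountability of $K$ to rule out positive dimension. The paper compresses all of this into a single line (``By Lemma 5.1, $M(K)=\bigcup_{k'} M(k')$ is countable, hence finite''), leaving the geometric step you spelled out via Noether normalization entirely implicit; your detour through a field of definition $k_N$ is harmless but unnecessary, since $M(k')$ can simply be read as $M\cap (k')^n$.
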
	

\begin{proof}
	By Lemma 5.1, $M(K)=\bigcup_{k'} M(k')$ is countable, hence finite.
\end{proof}

\subsection{A consequence of the refined Bezout's theorem} The refinement of Bezout's theorem due to Fulton and MacPherson (\cite{FM1977}, \cite{Vog1984}) is the statement that if $Z_i \ (1\le i\le r)$ are $r \ (r \ge 2)$ pure dimensional varieties in $\mathbb{P}_K^m$, then the number of irreducible components of $\bigcap _i Z_i$ is bounded by the Bezout number $\prod_i{\rm deg}(Z_i)$. It has the following simple consequence.

\begin{lem}\label{lem:5.4}
	\it Let $U$ be a nonempty Zariski open subset of $\mathbb{A}_K^r$ so that $U\subset \mathbb{A}_K^r\subset \mathbb{P}_K^r$. Let $h_i \ (i=1,2, \dots ,r)$ be polynomials on $A_K^r$ with $\deg h_i \equalscolon d_i$, and let $Z_i$ be the zero locus of $h_i$. Let $Z_i^\times=Z_i\cap U$ and  $\overline {Z_i}$ the closure of $Z_i$ in $\mathbb{P}_K^r$. If $\bigcap_i Z_i^\times=F$ is non-empty and finite, then $F$ has at most $D\colonequals\prod _id_i$ elements.
\end{lem}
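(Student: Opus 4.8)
The plan is to reduce the cardinality bound for $F$ to a count of irreducible components of a single projective intersection, control that count by the refined B\'ezout theorem quoted above, and then show by a short topological argument that each point of the finite set $F$ is forced to \emph{be} one of those components. First, some reductions: since $F\neq\emptyset$ each $Z_i\neq\emptyset$, so every $h_i$ is nonconstant, $d_i\geq 1$, and each $Z_i$ is a genuine hypersurface. If $r=1$ then $Z_1$ has at most $d_1$ points and $F\subseteq Z_1$ already gives $\abs{F}\leq d_1=D$, so we may assume $r\geq 2$. Let $\widetilde h_i$ be the homogenization of $h_i$ in $\mathbb{P}_K^r$, a form of degree $d_i$, and let $\overline{Z_i}=Z(\widetilde h_i)$ be the projective closure of $Z_i$; it is a hypersurface of degree $\leq d_i$, hence pure of dimension $r-1$. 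Since $U\subseteq\mathbb{A}_K^r$ and $\overline{Z_i}\cap\mathbb{A}_K^r=Z_i$, we have
\[
  F=\bigcap_i Z_i^\times = U\cap\bigcap_i Z_i = U\cap X,\qquad X\colonequals\bigcap_{i=1}^{r}\overline{Z_i}.
\]
Applying the Fulton--MacPherson refinement of B\'ezout's theorem to the $r$ pure-dimensional varieties $\overline{Z_1},\dots,\overline{Z_r}$ in $\mathbb{P}_K^r$ (legitimate since $K$, the algebraic closure of $k$, is algebraically closed), the number $N$ of irreducible components of $X$ satisfies $N\leq\prod_i\deg\overline{Z_i}\leq\prod_i d_i=D$.

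The key step is to show $\abs{F}\leq N$. Since $U$ is Zariski-open in $\mathbb{A}_K^r$, hence in $\mathbb{P}_K^r$, the set $F=U\cap X$ is open in $X$. As $F$ is finite, its subspace topology is discrete, so for each $p\in F$ the singleton $\{p\}$ is open in $F$ and therefore open in $X$. Pick an irreducible component $X_j$ of $X$ containing $p$; then $\{p\}=\{p\}\cap X_j$ is open in $X_j$, and it is also closed in $X_j$ since points of $\mathbb{P}_K^r$ are closed. But an irreducible space has no nonempty proper clopen subset, so $X_j=\{p\}$. Hence each point of $F$ is a zero-dimensional irreducible component of $X$, distinct points of $F$ give distinct components, and $\abs{F}\leq N\leq D$.

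The point that needs care is precisely this last step: a priori a point of $F$ could sit on a positive-dimensional component of $X$ --- for instance one lying at infinity, or one meeting $\mathbb{A}_K^r\setminus U$ --- and such a point would not be counted among the components of $X$. Ruling this out is exactly where the openness of $F$ in $X$ (coming from $U$ being Zariski-open) and the finiteness, hence discreteness, of $F$ are used. The remaining ingredients --- homogenization, purity of a hypersurface, and $\deg\overline{Z_i}\leq d_i$ --- are routine.
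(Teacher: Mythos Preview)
Your proof is correct and follows essentially the same route as the paper: pass to the projective closures $\overline{Z_i}$, bound the number of irreducible components of $X=\bigcap_i\overline{Z_i}$ by the refined B\'ezout theorem, and then argue that each point of $F=U\cap X$ is itself an irreducible component of $X$. Your clopen argument (each $\{p\}$ is open in $X$, hence clopen in any component through $p$) is just a repackaging of the paper's density argument (a nonempty open subset of an irreducible component is dense, so if it is finite the component is a point); the two are logically interchangeable.
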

\begin{proof}
	Since $\mathbb{A}^r_K$ is Zariski dense in $\mathbb{P}_K$ we have $\overline Z_i\bigcap A_K^r=Z_i$; moreover, $\overline{Z}_i$ is of pure degree $d_i$. Let $W_0$ be an irreducible component of $W\colonequals\bigcap \overline Z_i$ that meets $U$. Since $W_0$ is irreducible and $W_0\cap U$ is nonempty open in $W_0$, it is dense in $W_0$. Let $w \in W_0\bigcap U$. Then $w$ is in each of the $\overline Z_i\bigcap U$ and so $w\in F$. So $W_0\cap U$ is finite and contained in $F$. Since $W_0\bigcap U$ is dense in $W_0$, it follows that $W_0\bigcap U$ must consist of a single element of $F$ and $W_0$ itself consists of that point. Moreover all points of $F$ are accounted for in this manner as $F$ is contained in the union of irreducible components of $W$ which meet $U$. Hence the cardinality of $F$ is at most the number of irreducible components of $W$, which is at most $D$.
\end{proof}

\subsection{The maps $\pi_J$ and a universal bound for the cardinality of their fibers.}
Let $V\simeq k^m$ so that ${\bf f}=\left(f_1,\ldots, f_r\right)$ with $f_j\in k[x_1,\ldots,x_m]$. Assume that ${\bf c}$ is a non-critical value of ${\bf f}$ so that $L_{\bf c}$ has no singularities. Fix $J\subset \underline{m}\colonequals\{1,\ldots,m\}$, and let $\pi_J\colon k^m\to k^{m-r}$ map $(x_1,\ldots,x_m)$ to $(y_1,\ldots,y_{m-r})$ where $\{y_j\}_{j=1}^{m-r}=\{x_i \,|\, i\in \underline{m}\setminus J \}$. We wish to prove that the map $\pi_J$ restricted to $L_{\bf c}$ has has fibers of cardinality $\leq D\colonequals d_1\cdots d_r$, where $d_i\colonequals\deg(f_i)$. Without loss of generality assume $J=\{1,\ldots,r\},$ so that $\pi_J\colon \left(x_1,\ldots,x_m\right)\mapsto \left(x_{r+1},\ldots,x_m\right).$ Write $x=\left(x_1,\ldots,x_m\right)$ and $y=\left(x_{r+1},\ldots,x_m\right)$. Define $z$ so that $x=(z,y)$.

We regard $L_{\bf c}$ as an affine variety and $L_{{\bf c},J}$ as an affine open subvariety. For any $k'$ with $k\subset k'\subset K$ we have the respective sets of $k'$-points, $L_{\bf c}(k')$ and $L_{{\bf c},J}(k')$. Denote the restriction of $\pi_{J}$ to $L_{{\bf c},J}$ by $\overline{\pi}_J$.
\begin{prop}
	Let $D=\prod_{1\leq i\leq r} d_i$. Then the fibers of $\overline{\pi}_J$ are all of cardinality $\leq D$.
\end{prop}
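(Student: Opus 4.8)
The plan is to fix a point $y^0 = (x^0_{r+1},\ldots,x^0_m) \in k^{m-r}$ and bound the cardinality of $\overline{\pi}_J^{-1}(y^0)$ inside $L_{{\bf c},J}$ by exhibiting it as an intersection of $r$ hypersurfaces of degrees $d_1,\ldots,d_r$ in an affine $r$-space, so that Lemma \ref{lem:5.4} applies. Concretely, substitute the fixed values $y = y^0$ into the defining equations $f_i(z,y) - c_i = 0$ ($1 \le i \le r$), obtaining polynomials $h_i(z) \colonequals f_i(z,y^0) - c_i$ in the $r$ variables $z = (x_1,\ldots,x_r)$, with $\deg h_i \le d_i$. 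The fiber $\overline{\pi}_J^{-1}(y^0)$ is then the set of $z \in k^r$ with $h_1(z) = \cdots = h_r(z) = 0$ that additionally lie in the open set $U = \{\partial_J \ne 0\}$ (here $\partial_J$ restricted to the slice $y = y^0$ is a polynomial in $z$, nonvanishing on $L_{{\bf c},J}$).

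The key point for applying Lemma \ref{lem:5.4} is that the fiber is \emph{finite}. This follows from smoothness: since ${\bf c}$ is a non-critical value, $L_{\bf c}$ is a smooth variety of dimension $m-r$, and on $L_{{\bf c},J}$ the map $\overline{\pi}_J$ is a local analytic isomorphism (as recorded in \S 2.1, just before equation (\ref{eqn:measure})), hence has discrete fibers; a discrete algebraic set is finite. To be careful about fields, I would run this argument over $K$ (or over each finite extension $k'$): the scheme-theoretic fiber $\overline{\pi}_J^{-1}(y^0)$ over $K$ is a quasi-affine variety which is $0$-dimensional by the Jacobian/submersion criterion, hence a finite set of points, and then Lemma \ref{lem:5.4} with $Z_i = Z(h_i)$, $Z_i^\times = Z_i \cap U$, gives $\#\bigl(\overline{\pi}_J^{-1}(y^0) \cap U\bigr)(K) \le \prod_i d_i = D$. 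Since the $k$-points inject into the $K$-points, the same bound holds over $k$, and it is uniform in $y^0$ and in ${\bf c}$.

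The main obstacle, and the place where some care is needed, is verifying the hypotheses of Lemma \ref{lem:5.4} cleanly: one must check that after specializing $y = y^0$ the polynomials $h_i$ genuinely cut out a \emph{finite} set inside $U$ — i.e. that no positive-dimensional component of $\bigcap_i Z(h_i)$ meets $U$. This is exactly where non-criticality of ${\bf c}$ is used: if a curve in the fiber met $L_{{\bf c},J}$, then $\overline{\pi}_J$ would fail to be a local isomorphism there, contradicting the submersion property on the non-critical locus. A secondary subtlety is the drop in degree: $h_i$ could have $\deg h_i < d_i$ after substitution, but this only \emph{helps}, since $\prod_i \deg h_i \le \prod_i d_i = D$; and if some $h_i$ vanishes identically the corresponding $Z(h_i)$ is all of $\mathbb{A}^r_K$, but then the intersection cannot be simultaneously nonempty and finite unless it is empty, in which case the bound is trivial. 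Assembling these observations gives the claimed universal bound $D$.
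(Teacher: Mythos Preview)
Your proposal is correct and follows the same overall architecture as the paper: specialize to the slice $y=y^0$, obtain $h_i(z)=f_i(z,y^0)-c_i$ in $r$ variables, show that $\bigcap_i Z(h_i)$ meets the open set $U_1=\{\partial_J(\cdot,y^0)\neq 0\}$ in a finite set over $K$, and invoke Lemma~\ref{lem:5.4}. The genuine difference is in the finiteness step. You argue algebraically: at any $K$-point of the fiber the Jacobian $\det(\partial h_i/\partial z_j)=\partial_J(z,y^0)$ is nonzero, so by the Jacobian criterion the scheme $\bigcap_i Z(h_i)\cap U_1$ is smooth of dimension $0$ over $K$, hence finite. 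The paper instead reasons analytically over each finite extension $k'/k$: the local analytic isomorphism $\overline{\pi}_J$ forces the $k'$-points of the fiber to be discrete in a second-countable space, hence at most countable, and then Corollary~\ref{cor:5.3} (the consequence of Krasner's lemma developed in \S5.1) converts ``countable over every finite $k'$'' into ``finite over $K$''. Your route is shorter and purely algebro-geometric, and in fact renders \S5.1 unnecessary for this proposition; the paper's route stays within its analytic framework and is what motivates the Krasner material. (A minor quibble: your handling of the degenerate case $h_i\equiv 0$ is slightly garbled, but the case cannot occur when the fiber is nonempty, since $\partial_J\neq 0$ there forces each $\nabla h_i\neq 0$.)
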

\begin{proof}
	Note that $d\overline{\pi}_J$ is an isomorphism on $L_{{\bf c},J}(k)$. Hence $U_J(k)\colonequals\overline{\pi}_J\left(L_{{\bf c},J}(k)\right)$ is open in $k^{m-r}$ and $\overline{\pi}_J$ is a local analytic isomorphism of $L_{{\bf c},J}(k)$ onto $U_J(k)$. For any field $k'$ between $k$ and $K$, we write again $\overline{\pi}_J$ for the map $L_{{\bf c},J}(k')\to k'^{\,m-r}$, and $U_J(k')$ for its image. If $k'$ is a \emph{finite} extension of $k$, then $k'$ is again a local field; exactly as for $k$, we have $d\overline{\pi}_{J}\colon L_{{\bf c},J}(k')\to U_J(k')$ is an analytic isomorphism. For any $k'$, $k\subset k'\subset K$ with $k'/k$ finite, $U_J(k')$ is open in $k'^{\, m-r}$ and the fibers of $\overline{\pi}_J$ on $L_{{\bf c},j}(k')$ are discrete and at most countable. If we then fix $y\in U_J(k)$, and write $W_y$ for the affine variety $\overline{\pi}_J^{-1}(y)$, then $W_y(k')$ is at most countable for all finite extensions $k'/k$. Hence, by corollary \ref{cor:5.3}, $W_y(K)$ is finite. Let $F\colonequals W_y(K)$.
	
	On the other hand, $\pi_J^{-1}(y)(K)=K^r\times\{y\}\simeq K^r.$ Let $h_i(z)\colonequals f_i(z,y)-c_i.$ Then $h_i$ is a polynomial on $K^r$ of degree $\leq d_i$. Moreover, since $\overline{\pi}_J^{-1}(y)(k)$ is nonempty, $\frac{\partial(h_1,\ldots,h_r)}{\partial(x_1,\ldots,x_r)}=\partial_J(z,y)$ is not identically zero on $K^r$. Thus, $ \left\{ z | \partial_J(z,y)\neq 0 \right\}$ is a non-empty affine open $U_1$ in $K^r$. Moreover, $F=\bigcap_{1\leq i \leq r}Z(h_i)^\times$ where $Z(h_i)^\times\colonequals Z(h_i)\cap U_1$. So Lemma \ref{lem:5.4} applies and proves that $\#{F}\leq D$.
\end{proof}

\begin{lem}
	Let $\partial _J$ be as in \S 2.2. Then if $\omega _{m-r}$ is the exterior form corresponding to the Haar measure on $k^{m-r}$, the exterior form
	$$
	\rho_{\bf c}\colonequals{1\over \partial_J(x)} \overline{\pi}_J^\ast (\omega_{m-r})
	$$
	on $L_{{\bf c}, J}$ has the property that $|\rho_{\bf c}|$ generates the measure $\mu_{\bf c}\colonequals\mu_{{\bf f, c}}$. In particular, if $\lambda $  is the Haar measure on $k^{m-r}$ and $\nu$ is the measure generated by $\abs{\overline{\pi}_J^\ast (\omega_{m-r})}$, then $\overline{\pi}_J$ takes $\nu$ to $\lambda$ in small open neighborhoods of each point of $L_{{\bf c}, J}(k)$, and $d\mu_{\bf c}=|\partial_J(x)|^{-1} d\nu$.
\end{lem}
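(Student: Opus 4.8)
The plan is to unwind the definition of $\mu_{\bf c}$ supplied by Theorem~\ref{lem:2.2} and to match it, on the open piece $L_{{\bf c},J}$, against the explicit coordinate expression (\ref{eqn:measure}). By construction $\mu_{\bf c}$ is the measure generated by $\abs{\rho}$, where $\rho=\rho_{M/N}$ with $M=V^\times$, $N=N({\bf f})$ and $\sigma_M$, $\tau_N$ the chosen Haar forms on $V$ and $k^r$; since ${\bf c}$ is non-critical, $L_{\bf c}'=L_{\bf c}$. So the first---and essentially only---task is to identify $\rho$ on $L_{{\bf c},J}$ with $\varepsilon\,\partial_J(x)^{-1}\,\overline{\pi}_J^\ast(\omega_{m-r})$ for a locally constant $\varepsilon\in\{\pm1\}$.

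This is the pointwise linear-algebra identity of Lemma~\ref{thm:2.2}. Relabelling coordinates, we may take $J=\{1,\dots,r\}$. The change of variables $(x_1,\dots,x_m)\mapsto(f_1,\dots,f_r,x_{r+1},\dots,x_m)$ has Jacobian matrix block-upper-triangular with diagonal blocks $(\partial f_i/\partial x_j)_{1\le i,j\le r}$ and $I_{m-r}$, hence determinant $\partial_J$; therefore $\sigma_M=\partial_J^{-1}\,df_1\wedge\cdots\wedge df_r\wedge dx_{r+1}\wedge\cdots\wedge dx_m$. Feeding this into the quotient formula of Lemma~\ref{thm:2.2}---choosing $v_1,\dots,v_r\in T_x(V)$ with $d{\bf f}_x v_i$ equal to the standard basis of $T_{\bf c}(k^r)$ and $u_1,\dots,u_{m-r}$ a basis of $T_x(L_{\bf c})$---one uses $df_i(u_j)=0$ to see that the numerator factors (up to a sign from reordering the $x_i$ with $i\notin J$) as $\partial_J(x)^{-1}$ times $(df_1\wedge\cdots\wedge df_r)(v_1,\dots,v_r)\cdot(dx_{r+1}\wedge\cdots\wedge dx_m)(u_1,\dots,u_{m-r})$, while both $(df_1\wedge\cdots\wedge df_r)(v_1,\dots,v_r)$ and the denominator $\tau_{\bf c}(d{\bf f}_x v_1,\dots,d{\bf f}_x v_r)$ equal $1$. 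Since $dx_{r+1}\wedge\cdots\wedge dx_m=\overline{\pi}_J^\ast(\omega_{m-r})$, this is exactly (\ref{eqn:measure}).

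Passing to absolute values removes the sign: $\abs{\rho}=\abs{\partial_J(x)}^{-1}\,\abs{\overline{\pi}_J^\ast(\omega_{m-r})}$, and since $\nu$ is by definition the measure generated by $\abs{\overline{\pi}_J^\ast(\omega_{m-r})}$ we obtain $d\mu_{\bf c}=\abs{\partial_J(x)}^{-1}\,d\nu$. For the push-forward statement I would invoke the standard transformation law for the measure attached to a top-degree exterior form over a local field (\cite{serre2009lie}): an analytic isomorphism $\phi$ of open sets carries $\abs{\phi^\ast\eta}$ to $\abs{\eta}$ for any top form $\eta$ on the target. By the preceding discussion $\overline{\pi}_J$ is a local analytic isomorphism of $L_{{\bf c},J}(k)$ onto the open set $U_J(k)\subset k^{m-r}$; applying the transformation law in a small neighbourhood of a given point, with $\eta=\omega_{m-r}$, shows that $\overline{\pi}_J$ takes $\nu$ to $\lambda$ there.

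The whole argument is bookkeeping: the only place that needs care is the pointwise evaluation of $\rho_{M/N}$ through the quotient-form construction of Lemma~\ref{thm:2.2}, and the observation that the sign ambiguity in ordering the fibre coordinates is irrelevant once one works with $\abs{\rho}$. I do not expect a genuine obstacle.
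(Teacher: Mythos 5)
Your argument is correct and takes the same route as the paper: the paper's entire proof is the one-line citation of equation~(\ref{eqn:measure}), which was already derived in \S 2.1 by exactly the coordinate-change computation you carry out (plus the standard transformation law for measures from top forms under an analytic isomorphism). You have simply re-derived~(\ref{eqn:measure}) from the quotient-form description of Lemma~\ref{thm:2.2} rather than quoting it, but the content is identical.
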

\begin{proof}
	This is clear from (\ref{eqn:measure}).
\end{proof}	

\subsection{Proof of Theorem \ref{thm:1.1}.} 
This follows from three things: the lower bounds on $\norm{\nabla_r }$ when $\bf c$ is a stably non-critical value of $\bf f$, the relationship between $\lambda, \nu, \mu_{\bf f, c}$, and the temperedness of $\lambda$. The simple measure-theoretic lemma below explains this.
Let $R, S$ be locally compact metric  spaces which are second countable, with Borel measures $r,s$ respectively on them, and $\pi (R\rightarrow S)$ a continuous surjective map which is a local homeomorphism, and takes $r$ to $s$ in a small neighborhood of each point of $R$: this means that for each $x\in R$ there are open sets $M_x, N_{\pi (x)}$ containing $x$ and $\pi(x)$ respectively, such that $\pi$ is a homeomorphism of $M_x$ with $N_{\pi(x)}$ and takes $r$ to $s$. 
\begin{lem} 
	If there is a natural number $d$ such that all fibers of $\pi$ have cardinality  at most $d$, then for each Borel set $E\subset R$, $\pi(E)$ is a Borel set in $S$, and we have 
	$$
	r(E)\le d{\cdot}s(\pi(E)).
	$$
	Moreover if $f\ge 0$ is a continuous function on $R$ and $t$ is the Borel measure on $R$ defined by $dt=fdr$, then for any Borel set $E\subset R$ we have
	$$
	t(E)\le \sup_E|f|{\cdot}d{\cdot}s(\pi (E)).
	$$
\end{lem}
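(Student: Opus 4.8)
The statement is a routine-looking but genuinely useful measure-theoretic lemma about pushing forward Borel measures under a local homeomorphism with uniformly bounded fibers. The plan is to first handle the measurability of $\pi(E)$, then establish the inequality $r(E)\le d\cdot s(\pi(E))$ by a covering/partition argument, and finally deduce the weighted version by a sup estimate.

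\textbf{Step 1: measurability.} Since $R$ is second countable and locally compact metric, it has a countable base $\{M_i\}$ of open sets, which we may take so that each $M_i$ is one of the neighborhoods $M_x$ on which $\pi$ restricts to a homeomorphism onto an open set $N_i\subset S$. Thus $\pi$ maps each $M_i$ homeomorphically onto the open (hence Borel) set $N_i$. For a Borel set $E\subset R$, write $E=\bigcup_i (E\cap M_i)$; then $\pi(E)=\bigcup_i \pi(E\cap M_i)$, and each $\pi(E\cap M_i)=(\pi|_{M_i})(E\cap M_i)$ is the image of a Borel subset of $M_i$ under a homeomorphism, hence Borel in $N_i$ and therefore Borel in $S$. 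A countable union of Borel sets is Borel, so $\pi(E)$ is Borel. (One should note $\pi|_{M_i}$ also carries $r|_{M_i}$ to $s|_{N_i}$, which is what we use next.)

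\textbf{Step 2: the bound $r(E)\le d\cdot s(\pi(E))$.} Using the countable base $\{M_i\}$ above, build a \emph{disjointification}: set $A_1=M_1$ and $A_i=M_i\setminus(M_1\cup\cdots\cup M_{i-1})$, so the $A_i$ are pairwise disjoint Borel sets with $\bigcup_i A_i=R$ and $A_i\subset M_i$. Then for any Borel $E$, writing $E_i\colonequals E\cap A_i$, we have $r(E)=\sum_i r(E_i)$. Since $E_i\subset M_i$ and $\pi|_{M_i}$ is a measure-preserving homeomorphism onto $N_i$, $r(E_i)=s(\pi(E_i))$. Now the key point: for a \emph{fixed} point $y\in S$, the fiber $\pi^{-1}(y)$ has at most $d$ points, so $y$ lies in $\pi(E_i)$ for at most $d$ distinct indices $i$ (each $A_i$ contains at most one preimage of $y$, since $A_i\subset M_i$ and $\pi$ is injective on $M_i$). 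Hence $\sum_i \mathbf{1}_{\pi(E_i)}(y)\le d$ for every $y\in\pi(E)\subset S$, i.e. $\sum_i \mathbf{1}_{\pi(E_i)}\le d\cdot\mathbf{1}_{\pi(E)}$. Integrating against $s$ and using monotone convergence,
$$
r(E)=\sum_i s(\pi(E_i))=\int_S \Bigl(\sum_i \mathbf{1}_{\pi(E_i)}\Bigr)\,ds \le d\cdot s(\pi(E)).
$$

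\textbf{Step 3: the weighted statement.} Let $f\ge 0$ be continuous on $R$ and $dt=f\,dr$. For a Borel set $E$, apply Step 2 on each piece: $t(E)=\int_E f\,dr=\sum_i \int_{E_i} f\,dr \le \sum_i (\sup_E f)\, r(E_i) = (\sup_E f)\sum_i s(\pi(E_i)) \le (\sup_E f)\cdot d\cdot s(\pi(E))$, which is the claim. (If $\sup_E f=\infty$ there is nothing to prove.)

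\textbf{Main obstacle.} The only slightly delicate point is the bookkeeping in Step 2 — ensuring that the disjointified pieces $A_i$ each inject under $\pi$ and that the multiplicity function is genuinely bounded by $d$ pointwise on $S$; once the covering is set up correctly this is immediate from the fiber-cardinality hypothesis, and the rest is monotone convergence. No deep input is needed; this lemma is purely a packaging of "local homeomorphism with $\le d$-point fibers multiplies measure by at most $d$."
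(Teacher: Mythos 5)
Your proof is correct, and it takes a genuinely different route from the paper's. The paper argues by induction on $d$: it stratifies $S$ by fiber cardinality, shows the top stratum $S_d$ is open, observes that $\pi$ restricted over $S_d$ is a $d$-sheeted covering map, proves the estimate there by patching local trivializations, and then inducts on the complement $S\setminus S_d$, which has fibers of cardinality at most $d-1$. Your argument avoids the induction and the covering-space structure entirely: you extract a countable cover of $R$ by charts on which $\pi$ is a measure-preserving homeomorphism, disjointify it into a Borel partition $\{A_i\}$, and then observe that the uniform fiber bound translates into the pointwise multiplicity estimate $\sum_i \mathbf{1}_{\pi(E\cap A_i)}\le d\cdot\mathbf{1}_{\pi(E)}$, which integrated against $s$ gives the result in one line. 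This is cleaner in that it sidesteps the need to verify that the top stratum is open and that the restriction over it is an honest covering (a point the paper glosses over somewhat); it also doesn't need the second-countability of $S$, only of $R$. The paper's approach has the virtue of reducing to the two conceptually transparent cases of a global homeomorphism and a trivial covering, but in exchange it has more bookkeeping. One small imprecision in your Step 1: what you actually produce is a countable base (or, equally well, a Lindel\"of subcover) of sets each \emph{contained in} some chart $M_x$, rather than literally among the $M_x$; this is of course all that is needed, since $\pi$ restricted to any open subset of $M_x$ is still a measure-preserving homeomorphism onto its image. With that understood, both the measurability of $\pi(E)$ and the inequality follow as you wrote them.
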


\begin{proof} 
	The second inequality follows trivially from the first, so that we need only prove the first. We use induction on $d$. For $d=1$, $\pi$ is a continuous bijection of $R$ with $S$ ; being a local homeomorphism, it is then a global homeomorphism. It is easy to see that it takes $r$ to $s$ globally, and so the results are trivial. 
	Let $d>1$, assume the results for $d-1$, and suppose that there are points of $S$ the fibers over which have cardinality exactly $d$. Let $S_d$ be the set of such points in $S$. Now, if the fiber above a point has $e$ elements, the fibers of neighboring points have cardinality $\ge e$, and so $S_d$ is open in $S$. Let $R_d=\pi^{-1}(S_d)$. Then $\pi \colon R_d\rightarrow S_d$ is a $d$-sheeted covering map. If $x\in R_d$, we can find an open set $M$ containing $\pi(x)$ such that $N\colonequals\pi^{-1}(M)=\bigsqcup_{1\le j \le d}N_j$ where $\pi (N_j\rightarrow M)$ is a homeomorphism taking $r$ to $s$. If $E\subset N$ is a Borel set, then $E=\bigsqcup_jE \cap N_j$, so that $\pi(E)=\bigcup_j\pi(E\cap N_j)$ is Borel as $\pi$ is a homeomorphism on each $N_j$. Moreover,
	$$
	r(E)=\sum_j r(E\cap N_j)=\sum_js(\pi(E\cap N_j)\le d{\cdot}s(\pi(E)).
	$$
	These two properties are true with any Borel $M'\subset M$ and $N'=\pi'(M')$ replacing $M, N$ respectively. Write now $S_d=\bigcup_n M_n$ where the $M_n$ are open and have the properties described above for $M$. Then $S_d=\bigsqcup_nM'_n$ where $M'_n\subset M_n$, so that $R_d=\bigsqcup_n\pi^{-1}(M'_n)$. The two properties above are valid for any Borel set contained in any $\pi^{-1}(M'_n)$, hence they follow for any Borel set $E\subset R_d$.
	Write $S'=S\setminus S_d, R'=\pi^{-1}(S')=R\setminus R_d$. Then $(R',S', \pi )$ inherit the properties of $(R,S,\pi)$ with $d-1$ instead of $d$. The result is valid for $(R',S',\pi)$ and hence for $(R,S,\pi)$, as is easily seen.
	
	We are now ready to prove Theorem \ref{thm:1.1}. Assume that ${\bf c}$ is a stably non-critical value of ${\bf f}$. For simplicity of notation we will suppress mentioning ${\bf c}$, because all of our estimates are locally uniform in ${\bf c}$. On $L_{\bf c}=L$ we have the estimate
	$$
	||\nabla_r(x)||=\max _J |\partial _J(x)|> {C\over ||x||^\gamma}\qquad (||x||\ge 1)
	$$
	where $C>0, \gamma \ge 0$ are constants that remain the same when ${\bf c}$ is varied in a small neighborhood of ${\bf c}$. Let us write $L^+$ for the subset of $L$ where $||x||>1$. Now, at each point $x\in L^+$ some $|\partial_J(x)|$ equals $||\nabla_r(x)||$. Hence if we write
	$$
	M_J=\{x\in L^+\ |\ |\partial_J(x)|>C||x||^{-\gamma}\}
	$$  
	then
	$$
	L^+=\bigcup_JM_J.
	$$
	The map $\overline{\pi}_J$ is open on $M_J$ onto its image $W_J$ and is a local analytic isomorphism. Moreover, if $\lambda, \nu,\mu=\mu_{\bf c}$ have the same meaning as before, we have, on $M_J$, 
	$$
	d\mu=|\partial_J(x)|^{-1} d\nu
	$$
	and hence, for any Borel set $E\subset M_J$, with $D$ as in lemma \ref{lem:5.4},
	$$
	\mu(E)\le D{\cdot}\sup_E|\partial_J(x)^{-1}|{\cdot}\lambda(\pi_J(E)).
	$$
	Remembering that $|\partial_J(x)|^{-1}< C^{-1}||x||^\gamma$ we get from this that
	$$ 
	\mu(E)\le DC^{-1}{\cdot}\sup_E||x||^\gamma{\cdot}\lambda(\pi_J(E)).
	$$	
	If we take $E=B_R\cap M_J$ where $B_R=\{x\in k^m\ |\ ||x||<R\}$, we see that $\pi_J(E)$ is a subset of the open ball of $k^{m-r}$ of radius $R$, and hence $\lambda (\pi_J(E))\le AR^{m-r}$ where $A$ is a universal constant. Hence
	$$ 
	\mu (B_R\cap M_J)\le ADC^{-1}{\cdot}R^{m-r+\gamma}.
	$$
	Since this is true for all $J$, the temperedness of $\mu$ together with the growth estimate is proved, as well as the assertion that the last estimate remains unchanged if ${\bf c}$ varies in a small neighborhood of its original value. This finishes the proof of Theorem \ref{thm:1.1}.
\end{proof}

\section{Invariant measures on regular adjoint orbits of a semi simple Lie algebra.} As an application of our theorem \ref{thm:1.1} we shall prove that the invariant measures on regular semi simple orbits of a semi simple Lie algebra $\mathfrak g\colonequals \mathfrak{g}_K$ over a local field $k$ of {\it characteristic $0$} are tempered, at least when the algebra $J(k)$ of polynomial functions $\mathfrak g\to k$ invariant under the adjoint group is \emph{freely generated}. We write, for any extension $k_1$ of $k$, $J(k_1)$ a the invariant polynomial functions $\mathfrak{g}_{k_1}=\colon k_1\otimes_k \mathfrak{g}\to k_1$. Let $G(k_1)$ be the $k_1$-points of the adjoint group of $\mathfrak g$ which is defined over $k$. Let $K$ be the algebraic closure of $k$ and $\mathfrak g_K=K\otimes _k\mathfrak g$. Since $J(K)$ is freely generated by Chevalley's theorem, the field generated over $k$ by the coefficients of such a system of free generators, say $k'$, is local. Hence this assumption can be ensured by going over to $k'$. {\it We make this assumption on $k$ itself in this section.} It is satisfied if $\mathfrak g$ has a CSA split over $k$, e-g., $\mathfrak g=\mathfrak s\mathfrak l (n+1, k)$. For background material see \cite{var2}.
Let $r={\rm rank} (\mathfrak g)$. Then by assumption we can choose $g_1, \dots ,g_r \in J(k)$  freely generating $J(k)$, hence also $J(K)$ (over $K$). An element $H\in \mathfrak g_K$ is semi simple (resp. nilpotent) if ${\rm ad}\, X$ is semi simple (resp. nilpotent). A semi-simple element $H$ is called {\it regular} if its centralizer is a Cartan sub-algebra (CSA) of $\mathfrak g_K$. There is an invariant polynomial $D\in J(k)$, called the discriminant of $\mathfrak{g}$, such that if $X\in \mathfrak{g}_k$, $X$ is semi-simple and regular if and only if $D(X)\neq 0$. If $Y\in \mathfrak g$ is any element, we can write $Y=H+X$ where $H$ is semi simple and $X$ is a nilpotent in the derived algebra of the centralizer of $H$ in $\mathfrak g_K$ (which is semi simple). It is known \cite{Kos63} that the orbit of $H+X$ has $H$ in its closure, and so, for any $g\in J(K)$, we have $g(H)=g(H+X)$. If $\mathfrak h_K$ is a CSA of $\mathfrak g_K$, it is further known that the restriction map from $\mathfrak g_K$ to $\mathfrak h_K$ is an isomorphism of $J(K)$ with the algebra $J(\mathfrak h_K)$ of polynomials on $\mathfrak h_K$ invariant under the Weyl group $W_K$ of $\mathfrak h_K$. It is known that the differentials $dg_1,\dots ,dg_r$ are linearly independent at an element $Y$ of $\mathfrak g_K$ if and only if $Y$ lies in an adjoint orbit of maximal dimension, which is $\dim (\mathfrak g_K)-{\rm rank}(\mathfrak g_K)=n-r$ where $n=\dim (\mathfrak g_K)$ \cite{Kos63}. If $Y$ is  semi simple, this happens if and only if $Y$ is regular. Let $\mathfrak g_K^\prime$ be the invariant open set of regular semi simple elements. We write
$$
{\bf F}=(g_1,\dots ,g_r) \colon \mathfrak g_K\longmapsto K^r
$$
and view it as a polynomial map of $\mathfrak g_K$ into $K^r$ commuting with the action of the adjoint group. Before we apply theorem \ref{thm:1.1} to this set up, we need some preliminary discussion. Let
${\cal R}={\bf F}(\mathfrak g_K^\prime).$ The next lemma deals with the situation over $K$.
\begin{lem} 
  We have $\mathfrak g_K^\prime={\bf F}^{-1}({\cal R}).$ Moreover ${\cal R}$ is Zariski open in $K^r$, and is precisely the set of noncritical values of ${\bf F}$, so that all the noncritical values are also stably noncritical. Moreover, for any ${\bf c}\in {\cal R}$, the pre-image ${\bf F}^{-1}({\bf c})$ is an orbit under the adjoint group, consisting entirely of regular semi simple elements, hence smooth.
\end{lem}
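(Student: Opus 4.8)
The plan is to prove the four assertions in turn, using the structure theory of semisimple Lie algebras over an algebraically closed field together with the characterization, quoted above, that the differentials $dg_1,\dots,dg_r$ are independent at $Y$ exactly when $Y$ lies in an orbit of maximal dimension, and that a semisimple $Y$ lies in such an orbit iff it is regular.

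First I would establish $\mathfrak g_K' = {\bf F}^{-1}({\cal R})$. The inclusion $\mathfrak g_K' \subseteq {\bf F}^{-1}({\cal R})$ is immediate. For the reverse, suppose $Y \in {\bf F}^{-1}({\bf c})$ with ${\bf c} \in {\cal R}$, so ${\bf c} = {\bf F}(Y_0)$ for some regular semisimple $Y_0$. Write $Y = H + X$ with $H$ semisimple, $X$ nilpotent in the derived algebra of the centralizer of $H$; then $g_i(Y) = g_i(H)$ for all $i$, so ${\bf F}(H) = {\bf c}$. Now I would use that $H$ and $Y_0$, both semisimple with the same image under the full system of invariants $g_1,\dots,g_r$ generating $J(K)$, must be adjoint-conjugate: after moving both into a fixed CSA $\mathfrak h_K$, the restriction map identifies $J(K)$ with $J(\mathfrak h_K) = $ Weyl-invariants, and two elements of $\mathfrak h_K$ with the same Weyl-invariants lie in one $W_K$-orbit. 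Hence $H$ is $G(K)$-conjugate to $Y_0$, so $H$ is regular semisimple; but then the centralizer of $H$ is a CSA, $X$ lies in the derived algebra of that CSA, which is zero, so $Y = H$ is regular semisimple. This simultaneously proves the fiber ${\bf F}^{-1}({\bf c})$ is a single adjoint orbit of regular semisimple elements.

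Next, ${\cal R}$ being the set of noncritical values: a value ${\bf c}$ is noncritical iff every $Y$ in the fiber has $dg_1,\dots,dg_r$ independent at $Y$, i.e. lies in a maximal-dimension orbit. Since by the previous paragraph ${\bf F}^{-1}({\cal R})$ consists of regular semisimple elements, which lie in maximal-dimension orbits, every ${\bf c} \in {\cal R}$ is noncritical; conversely if ${\bf c} \notin {\cal R}$, pick any $Y$ in the fiber — it is not regular semisimple, hence (being a non-maximal orbit, since its semisimple part is non-regular and $g_i(Y) = g_i(H_{\mathrm{ss}})$ forces the fiber to contain the non-regular semisimple $H_{\mathrm{ss}}$, which sits in a strictly smaller orbit) the differentials are dependent there, so ${\bf c}$ is critical. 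Thus ${\cal R}$ is exactly the noncritical-value set. That ${\cal R}$ is Zariski open: it is the image under ${\bf F}$ of the Zariski-open $G(K)$-invariant set $\mathfrak g_K'$, but more usefully ${\cal R} = \{{\bf c} : D \ne 0 \text{ on the fiber}\}$; since $D \in J(K)$ is constant on fibers and is a polynomial in $g_1,\dots,g_r$ (as $J(K) = K[g_1,\dots,g_r]$), we may write $D = P(g_1,\dots,g_r)$ and then ${\cal R} = {\bf F}(\mathfrak g_K) \cap \{{\bf c} : P({\bf c}) \ne 0\}$; since ${\bf F}$ is dominant (the $g_i$ are algebraically independent) its image contains a nonempty Zariski-open set, and intersecting with $\{P \ne 0\}$ and using that every fiber over $\{P\neq 0\}$ is nonempty (any point of $\mathfrak h_K$ with prescribed regular Weyl-invariants) gives ${\cal R} = \{P \ne 0\}$, Zariski open. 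Finally "all noncritical values are stably noncritical" is then automatic: ${\cal R}$ is Zariski open, hence open in the $k$-topology, and consists entirely of noncritical values.

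The main obstacle I anticipate is the clean justification that $H$ and $Y_0$ are conjugate purely from equality of all invariants — i.e. pinning down that the closed orbit in a fiber is unique and that a semisimple element's orbit is determined by its invariants. This rests on: (i) every fiber ${\bf F}^{-1}({\bf c})$ contains a unique closed orbit, namely that of the semisimple part, which follows from the quoted result of Kostant \cite{Kos63} that $H$ lies in the closure of the orbit of $H+X$; and (ii) the identification of $J(K)$ with $W_K$-invariants on a CSA together with the fact that two semisimple elements are conjugate iff conjugate into a common CSA and then $W_K$-related iff they have the same Weyl-invariants — the last step being the Chevalley restriction theorem plus the elementary fact that $K[\mathfrak h_K]^{W_K}$ separates $W_K$-orbits. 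Once these structural inputs are in place, everything else is bookkeeping; I do not expect to need Theorem \ref{thm:1.1} itself for this lemma — it is the preparatory geometric input that will feed into the temperedness statement afterwards.
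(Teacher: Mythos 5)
Your proposal is correct and follows essentially the same route as the paper: Jordan decomposition to reduce to the semisimple part, conjugation into a fixed Cartan subalgebra, the Chevalley restriction theorem, and separation of Weyl-group orbits by $W_K$-invariants, together with Kostant's characterization of maximal-dimension orbits via independence of $dg_1,\dots,dg_r$. The one point where you diverge is the proof that $\mathcal{R}$ is Zariski open: you express the discriminant as $D = P(g_1,\dots,g_r)$ using $J(K) = K[g_1,\dots,g_r]$, show $\mathcal{R} = \{P\neq 0\}$ using surjectivity of $\mathbf{F}|_{\mathfrak h_K}$, and conclude $\mathcal{R}$ is a principal Zariski-open set; the paper instead observes that $\mathbf{F}$ is submersive (hence smooth, hence flat, hence open) on $\mathfrak g_K'$ and cites the open-mapping property of flat morphisms. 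Your variant is more elementary and gives the sharper description $\mathcal{R} = \{P\neq 0\}$, while the paper's is shorter given the reference; otherwise the two proofs match step for step, including the key observation that a critical value outside $\mathcal{R}$ would have its non-regular semisimple part $H_{\mathrm{ss}}$ in the fiber, where the differentials must be dependent, yielding the contradiction.
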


\begin{proof}
	Since $dg_1\wedge\dots \wedge dg_r\neq 0$ everywhere on $\mathfrak{g}'_K$, the map ${\bf F}$ is smooth on $\mathfrak{g}'_K$. Hence it is an open map (\cite{gor2010}, Corollary 14.34), showing that ${\bf F}(\mathfrak{g}'_K)=\mathcal{R}$ is open in $K^r$.
	
	We shall prove that if $Y\in \mathfrak g_K$ and $X\in \mathfrak g_K^\prime$ are such that ${\bf F}(Y)={\bf F}(X)$, then $Y$ is regular semi simple, and is conjugate to $X$ under the adjoint group. Suppose $Y$ is not regular semi simple. Write $Y=Z+N$ where $Z$ is semi simple and $N$ is a nilpotent in the derived algebra of the centralizer of $Z$. The ${\bf F}(Y)={\bf F}(Z)={\bf F}(X)$. Using the action of the adjoint group separately on $X$ and $Z$ we may assume that $X, Z\in \mathfrak h_K$ where ${\mathfrak h}_K$ is a CSA, and ${\bf F}(X)={\bf F}(Z)$. Then all Weyl group invariant polynomials take the same value at $Z$ and $X$ and so $Z$ and $X$ are conjugate under the Weyl group. But as $X$ is regular, so is $Z$, hence $N=0$ or $Y$ itself is regular semi simple. So, $\mathfrak{g}'_K={\bf F}^{-1}(\mathcal{R})$. But then the above argument already shows that $Y$ and $X$ are conjugate under the adjoint group. Since the fibers of ${\bf F}$ above points of ${\cal R}$ are smooth, all points of ${\cal R}$ are stably non-critical. 
	It remains to show that there are no other non-critical values. Suppose $Y\in \mathfrak g_K$ is  such that ${\bf d}={\bf F}(Y)$ is a non-critical value where ${\bf d}\notin {\cal R}$. Then $Y\notin \mathfrak{g}'_K$. Now $Y=Z+N$ as before, where $Z$ is no longer regular (it is semi simple still). Then ${\bf F}(Z)={\bf F}(Y)$ and so $Z\in {\bf F}^{-1}({\bf d})$. But as $Z$ is semi simple but not regular, $dg_1\wedge \dots ,\wedge dg_r$ is zero at $Z$ \cite{Kos63}. Thus $Z$ is a singular point of ${\bf F}^{-1}({\bf d})$, contradicting the fact that ${\bf d}$ is non-critical. The lemma is thus completely proved. 
\end{proof}
We now come to the case where the ground field is $k$, a local field of characteristic $0$. We assume that the $g_i$ have coefficients in $k$. Fix a regular semi simple element $H_0$ in $\mathfrak g_k$. Let
$$
  W(k)\colonequals W_{H_0}(k)=\{X\in \mathfrak g(k) | g_i(X)=g_i(H_0) (1\le i\le r)\}.
$$ 
\begin{thm} 
	{Assume that $J(k)$ is freely generated. Then the canonical measure on $W(k)$ is tempered, and the growth estimate {\emph{(\ref{G})}} (see \S 1) is uniform when $H$ varies in a neighborhood of $H_0$.}
\end{thm}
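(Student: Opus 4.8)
The plan is to obtain the theorem as a direct application of Theorem~\ref{thm:1.1} to the polynomial map ${\bf F}=(g_1,\dots,g_r)\colon \mathfrak g_k\to k^r$. First I would check that ${\bf F}$ satisfies the running hypotheses of \S 2.1 with $V=\mathfrak g_k$ and $m=n$: since $H_0$ is regular semi simple, $dg_1\wedge\cdots\wedge dg_r\neq 0$ at $H_0$ by the results of Kostant recalled above, hence $dg_1\wedge\cdots\wedge dg_r\not\equiv 0$ on $\mathfrak g_k$. Fixing Haar measures on $\mathfrak g_k$ and $k^r$, the construction of \S 2.1 then produces the canonical measures $\mu_{{\bf F},{\bf c}}$ on the fibers ${\bf F}^{-1}({\bf c})$, and $W(k)=W_{H_0}(k)$ is precisely the fiber ${\bf F}^{-1}({\bf c}_0)$, with ${\bf c}_0\colonequals{\bf F}(H_0)$, whose canonical measure is $\mu_{{\bf F},{\bf c}_0}$.

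Next I would verify that ${\bf c}_0$ is stably non-critical for ${\bf F}$ over $k$. The preceding lemma, applied over $K$, shows that $\mathcal R={\bf F}(\mathfrak g_K^\prime)$ is Zariski open in $K^r$ and coincides with the set of non-critical values of ${\bf F}$ over $K$. Criticality is a Zariski-closed condition defined over $k$ (it is the simultaneous vanishing of all $r\times r$ minors of the Jacobian of ${\bf F}$), so $\mathcal R\cap k^r$ is Zariski open, hence open in the $k$-topology, in $k^r$, and every point of it is a non-critical value of ${\bf F}\colon\mathfrak g_k\to k^r$. Since $H_0\in\mathfrak g_K^\prime$ and $\mathfrak g_K^\prime={\bf F}^{-1}(\mathcal R)$, we have ${\bf c}_0\in\mathcal R\cap k^r$, so ${\bf c}_0$ is stably non-critical.

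Now Theorem~\ref{thm:1.1} applies directly: $\mu_{{\bf F},{\bf c}_0}$ is tempered, and there are constants $A>0$, $\gamma\ge 0$ and an open neighborhood $U$ of ${\bf c}_0$ in $k^r$ with $\mu_{{\bf F},{\bf d}}(B_R)\le AR^{\,n-r+\gamma}$ for all $R\ge 1$ and ${\bf d}\in U$. By continuity of ${\bf F}$, the set ${\bf F}^{-1}(U)$ is a neighborhood of $H_0$ in $\mathfrak g_k$; for $H$ in it the value ${\bf F}(H)$ lies in $U$ and is non-critical, the discriminant satisfies $D(H)\neq 0$ so $W_H(k)$ is non-empty (it contains $H$) and smooth, and its canonical measure is $\mu_{{\bf F},{\bf F}(H)}$, which obeys the stated bound. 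This is exactly the uniform growth estimate~(\ref{G}) of \S 1, completing the proof.

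The only steps that require genuine care — and where the work of the earlier sections is actually consumed — are the descent of ``stably non-critical'' from $K$ to $k$, which rests on the Zariski openness of $\mathcal R$ and its identification with the non-critical locus in the preceding lemma (themselves consequences of Chevalley's and Kostant's theorems together with the refined Bezout argument of \S 5.2), and the routine bookkeeping identifying the canonical measure of \S 2.1 on $W_H(k)$ with the $G(k)$-invariant measures one writes down orbit by orbit: over $k$ the fiber $W_H(k)$ is a finite disjoint union of open $G(k)$-orbits, on each of which the relative top form is $G(k)$-invariant, so the canonical measure restricts to an invariant measure on each orbit and temperedness of the former yields temperedness of each of the latter. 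I expect the $K$-to-$k$ descent to be the main — though still mild — obstacle; no essentially new analytic difficulty arises beyond what is already packaged in Theorem~\ref{thm:1.1}.
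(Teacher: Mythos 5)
Your proof takes the same route as the paper: identify $W(k)$ as the fiber ${\bf F}^{-1}({\bf c}_0)$ with ${\bf c}_0={\bf F}(H_0)$, verify that ${\bf c}_0$ is stably non-critical using the preceding lemma, and invoke Theorem~\ref{thm:1.1}. The paper's proof is a single sentence, and you usefully make explicit the descent from $K$ to $k$ that it leaves implicit; but there is a small imprecision in how you carry it out. You argue that $\mathcal R\cap k^r$ is Zariski open because ``criticality is a Zariski-closed condition defined over $k$'' via the Jacobian minors. That observation describes the critical \emph{set} $C({\bf F})$, not the critical \emph{values}: the image of a closed set under a polynomial map need not be closed, so this does not by itself show that $K^r\setminus\mathcal R$ is closed and $k$-defined. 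What is really needed is that $\mathcal R={\bf F}(\mathfrak g_K')$, being the image of the $k$-defined open set $\{D\neq 0\}$ under the $k$-defined map ${\bf F}$, is a $k$-constructible set; the lemma shows it is Zariski open, and since it is Galois-stable it is Zariski open over $k$, whence $\mathcal R\cap k^r$ is open in the $k$-topology. Finally, your closing remarks about decomposing $W(k)$ into open $G(k)$-orbits carrying invariant measure belong to the next theorem in the section (closedness of regular semisimple orbits and temperedness of the invariant measures on them), not to the statement at hand, which concerns only the canonical measure on the full level set $W(k)$ produced by the construction of \S 2.1.
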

\begin{proof} 
	For the map ${\bf F}$ on $\mathfrak g_k$ we know that $(g_1(H_0), \dots ,g_r(H_0))$ is a stably non-critical value and so the theorem follows at once from Theorem \ref{thm:1.1}. 
\end{proof}
Although $W(K)$ is a single orbit under $G(K)$, this may no longer true over $k$.
$W(k)$ is a $k$-analytic manifold of dimension $n-r$. On the other hand the stabilizer in $G(k)$ of any point of $W(k)$ has dimension $r$ and so its orbit under $G(k)$ is an open sub-manifold of $W(k)$. If we do this at every point of $W(k)$ we obtain a decomposition of $W(k)$ into a disjoint union of $G(k)$-orbits which are open sub-manifolds of dimension $n-r$ and so all these sub-manifolds are closed also. Thus the orbit $G(k).H_0$ is an open and closed sub-manifold of $W(k)$ of dimension $n-r$. Now the canonical measure on $W(k)$ is invariant under $G(k)$ and so on the orbit $G(k).H_0$ it is a multiple of the invariant measure on the orbit. Note that the orbit being closed, the invariant measure on it is a Borel measure on $\mathfrak g_k$. Since the canonical measure is tempered on $W(k)$ by Theorem 2, it is immediate that the invariant measure on the orbit $G(k).H_0$ is also tempered. Hence we have proved the following theorem.
\begin{thm} 
	{Assume that $J(k)$ is freely generated. Then the orbits of regular semi simple elements of $\mathfrak g_k$ are closed, and the invariant measures on them are tempered.}
\end{thm}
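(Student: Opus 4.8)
The plan is to derive this from the preceding theorem (temperedness of the canonical measure on $W(k)$) by analysing how a single fiber of $\mathbf{F}$ decomposes into $G(k)$-orbits. First I would recall that, under the freeness hypothesis, the lemma on the situation over $K$ shows $\mathbf{c}_0\colonequals \mathbf{F}(H_0)$ to be a stably non-critical value of $\mathbf{F}\colon \mathfrak g_k\to k^r$, so $W(k)=\mathbf{F}^{-1}(\mathbf{c}_0)\cap\mathfrak g_k$ is a smooth $k$-analytic submanifold of $\mathfrak g_k$ of dimension $n-r$ (with $n=\dim\mathfrak g_K$, $r=\operatorname{rank}\mathfrak g$), and the preceding theorem gives that the canonical measure $\mu_{\mathbf{c}_0}$ on $W(k)$ is tempered, uniformly as $H_0$ moves among nearby regular semisimple points.

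Next I would carry out a dimension count for the $G(k)$-orbits inside $W(k)$. If $X\in W(k)$ is regular semisimple, its centralizer in $\mathfrak g$ is a Cartan subalgebra of dimension $r$, so the stabilizer of $X$ in $G$ has dimension $r$ and, by the local theory of analytic group actions over a local field, the orbit map realizes $G(k)\cdot X$ as an immersed $k$-analytic submanifold of $\mathfrak g_k$ of dimension $n-r$. Since $G(k)\cdot X\subseteq W(k)$ and both are $(n-r)$-dimensional, $G(k)\cdot X$ is open in $W(k)$. As $W(k)$ is the disjoint union of such orbits, each is therefore also closed in $W(k)$; in particular $G(k)\cdot H_0$ is open and closed in $W(k)$, hence closed in $\mathfrak g_k$, and the $G(k)$-invariant measure on it is a genuine Borel measure on $\mathfrak g_k$.

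Finally I would invoke the $G(k)$-invariance of the canonical measure: the relative top form $\rho_{\mathbf{c}_0}=\rho_{\mathfrak g_k/k^r}$ is canonically attached to $\mathbf{F}$ together with the Haar forms on $\mathfrak g_k$ and $k^r$, and since $\mathbf{F}$ commutes with the adjoint action while that action preserves Haar measure on $\mathfrak g_k$, the measure $\mu_{\mathbf{c}_0}$ is $G(k)$-invariant on $W(k)$. Restricting to the open-and-closed orbit $G(k)\cdot H_0$ and using uniqueness of invariant measure on a homogeneous space, $\mu_{\mathbf{c}_0}$ there is a positive multiple of the invariant measure on the orbit; temperedness of $\mu_{\mathbf{c}_0}$ on all of $W(k)$ then forces the growth bound (G) for its restriction to the closed submanifold $G(k)\cdot H_0$, hence for the invariant measure on the orbit, and the same reasoning applied to a nearby regular semisimple $H$ gives the stated uniformity. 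The step I expect to require the most care is the openness of $G(k)\cdot X$ in $W(k)$ — this rests on the Kostant-theoretic fact that $dg_1\wedge\cdots\wedge dg_r$ is nonvanishing precisely on adjoint orbits of the maximal dimension $n-r$, combined with the local submersion property of the orbit map over a local field — together with the bookkeeping needed to read off the $G(k)$-invariance of $\mu_{\mathbf{c}_0}$ formally from the canonicity of $\rho$.
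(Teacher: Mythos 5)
Your proposal is correct and follows essentially the same line as the paper: dimension count to show each $G(k)$-orbit in $W(k)$ is open, hence also closed, then use $G(k)$-invariance of the canonical measure to identify its restriction to $G(k)\cdot H_0$ with a multiple of the invariant orbital measure, and read off temperedness from the preceding theorem. The only difference is that you spell out the $G(k)$-invariance of $\mu_{\mathbf{c}_0}$ (via canonicity of $\rho$ and equivariance of $\mathbf{F}$) rather than asserting it, which is a useful but inessential elaboration.
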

For temperedness of invariant measures on semi simple symmetric spaces at the Lie algebra level over ${\mathbb{R}}$, see \cite{Heck1982}.

\begin{rem}
	Deligne and Ranga Rao \cite{RangaRao72} have independently shown that for any $X\in\mathfrak{g}_k$, there is an invariant measure on the adjoint orbit of $X$, and this measure extends to a Borel measure on the $k$-closure of the adjoint orbit of $X$. It is natural to ask if these are tempered in our sense when $k$ is non-rchimedean. We shall consider this question in another paper since it does not follow from the results proved here.
\end{rem}

\section{Examples}
In this section we give some examples. We consider only single polynomials ($r=1$) of degree $d \geq 3$, defined over a local field $k$ of characteristic 0. Let $f\in k[x_1,\ldots,x_m]$.

\subsection{Elementary methods when $r=1$ and $f$ is homogeneous}
For $f$ homogeneous we have Euler's theorem on homogeneous functions, which asserts that $\sum_{i} x_i \partial f/\partial x_i =d\cdot f$. Let $L_c=\left\{ x\in k^m | f(x)=c \right\}$ for $c\in k$. Then, for any critical point $x$ of $f$, we have $f(x)=0$, i.e., $L_0$ contains all the critical points. So every $c\in k\setminus \{0\}$ is a noncritical value and so is also stably noncritical. Moreover, Euler's identity for $x\in L_c$, $c\neq 0$, gives $\sum_{i} x_i\partial f/\partial x_i =dc$, so that we have
\begin{displaymath}
	\abs{d}\abs{c}=\abs{\sum_{i} x_i\frac{\partial f}{\partial x_i} }\leq C\norm{x}\norm{\nabla f(x)}\qquad (C>0)
\end{displaymath}
giving the estimate, with $A$ a constant $>0$,
\begin{displaymath}
	\norm{\nabla f(x)} \geq A\norm{x}^{-1}\qquad (\norm{x}\geq1,\;x\in L_c).
\end{displaymath}
Moreover the projection $(x_1,\ldots,x_m)\mapsto (x_1,\ldots,\hat{x}_i,\ldots x_m)$ has the property that all fibers have cardinality $\leq d$. We thus have theorem \ref{thm:1.1} with 
\begin{displaymath}
	\mu_{f,c} =O(R^{m})\qquad(R\to\infty),
\end{displaymath}
where $O$ is uniform locally around $c$. We can actually say more.
\begin{prop}\label{prop:7.1}
	Suppose ${\bf 0}$ is the only singularity in $L_0$, i.e. the projective locus of $L_0$ is smooth. Then for any compact set $W\subset k\setminus \{{\bf0}\}$, we have
	\begin{equation}\label{eq:7.1}
		\inf_{c\in W, x\in L_c, \norm{x}\geq 1}\ \norm{\nabla f(x)} > 0.
	\end{equation}
	Moreover, the measure $\mu_{f,0}$ defined on $L_0\setminus\{0\}$ is finite in open neighborhoods of ${\bf 0}$ if $m > d$, so that it extends to a Borel measure on $L_0$. Finally, for all $c\in k$,
	\begin{displaymath}
		\mu_{f,c}(B_r)=O(R^{m-1}).
	\end{displaymath}
	If $m\leq d$, there are examples where $\mu_{f,0}$ is not finite in neighborhoods of $\bf 0 $.
\end{prop}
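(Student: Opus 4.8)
The plan is to handle the four assertions separately, the common engine being the homogeneity of $f$ and the scaling maps $\phi_\lambda\colon x\mapsto \lambda x$ ($\lambda\in k^\times$), which preserve the cone $L_0$ (and carry $L_c$ onto $L_{\lambda^d c}$). I begin with \eqref{eq:7.1}. By Euler's identity $\sum_i x_i\partial_i f=d\cdot f$ and $\mathrm{char}\,k=0$, any point where $\nabla f$ vanishes already lies on $L_0$; hence the hypothesis that $\mathbf 0$ is the only singular point of $L_0$ is equivalent to the statement that $\nabla f$ vanishes only at $\mathbf 0$. Since $\nabla f$ is homogeneous of degree $d-1$, the continuous map $x\mapsto\norm{\nabla f(x)}$ is strictly positive on the compact set $\{\norm{x}=1\}$, hence bounded below there by some $c_0>0$; scaling by a uniformizer (non-archimedean case) or a positive real (archimedean case) and using homogeneity upgrades this to $\norm{\nabla f(x)}\geq c_0\norm{x}^{d-1}$ for all $x$, so in particular $\norm{\nabla f(x)}\geq c_0$ whenever $\norm{x}\geq 1$ — which is stronger than \eqref{eq:7.1}, and the constant depends only on $f$, not on any level value.

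The key computation for the statement about $\mu_{f,0}$ near $\mathbf 0$ is the scaling behaviour of the canonical measure. From $\partial_j f(\lambda y)=\lambda^{d-1}\partial_j f(y)$, the identity $d(\lambda y_1)\cdots\widehat{d(\lambda y_j)}\cdots d(\lambda y_m)=\lambda^{\,m-1}\,dy_1\cdots\widehat{dy_j}\cdots dy_m$, and the local formula $d\mu_{f,0}=\abs{\partial_j f(x)}^{-1}\,\abs{dx_1\cdots\widehat{dx_j}\cdots dx_m}$ of \eqref{eqn:measure}, one reads off $\mu_{f,0}(\phi_\lambda(E))=\abs{\lambda}^{\,m-d}\mu_{f,0}(E)$ for Borel $E\subset L_0\setminus\{\mathbf 0\}$. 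Now $\{\norm{x}=1\}\cap L_0$ (non-archimedean case), respectively $\{1/2<\norm{x}\leq 1\}\cap L_0$ (archimedean case), is a relatively compact subset of the smooth locus $L_0\setminus\{\mathbf 0\}$, hence has finite positive $\mu_{f,0}$-mass, say $m_1$. Decomposing $\{0<\norm{x}\leq 1\}\cap L_0$ into the disjoint pieces $\phi_{\pi^n}(\{\norm{x}=1\}\cap L_0)$, $n\geq 0$ (resp. the dyadic annuli $\phi_{2^{-n}}(\{1/2<\norm{x}\leq1\}\cap L_0)$), gives
\[
\mu_{f,0}\bigl(\{0<\norm{x}\leq1\}\cap L_0\bigr)=m_1\sum_{n\geq 0}q^{-n(m-d)}\qquad\Bigl(\text{resp. }=m_1\sum_{n\geq 0}2^{-n(m-d)}\Bigr),
\]
which converges exactly when $m>d$. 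When $m>d$, assigning mass $0$ to $\{\mathbf 0\}$ then yields a locally finite Borel measure on all of $L_0$ (local finiteness away from $\mathbf 0$ being the usual Radon property of the canonical measure on a smooth manifold).

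For the bound $\mu_{f,c}(B_R)=O(R^{m-1})$, I split $B_R\cap L_c$ into $\{\norm{x}>1\}$ and $\{\norm{x}\leq1\}$. On the first piece the lower bound $\norm{\nabla f(x)}\geq c_0$ lets me run the proof of Theorem~\ref{thm:1.1} with $\gamma=0$: writing $L_c^+=\bigcup_j M_j$ with $M_j=\{x\in L_c:\norm{x}>1,\ \abs{\partial_j f(x)}\geq c_0\}$ and using that the fibers of the coordinate projection $\pi_j$ restricted to $L_c$ have cardinality $\leq d=\deg f$ (the argument of §5.3), one gets $\mu_{f,c}(B_R\cap M_j)\leq (d/c_0)\,\lambda(\pi_j(B_R\cap M_j))\leq (d/c_0)A R^{m-1}$, hence $\mu_{f,c}(B_R\cap L_c^+)=O(R^{m-1})$ with constant depending only on $f$. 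On the second piece: if $c\neq 0$ then $L_c$ is smooth and $L_c\cap\{\norm{x}\leq1\}$ is compact, so of finite mass — uniformly bounded as $c$ ranges over a compact set not containing $0$, via the continuity in $c$ of $\int\alpha\,d\mu_{f,c}$ from the coarea formula of §2.1; if $c=0$ this piece is finite precisely by the previous paragraph, provided $m>d$. Summing the two contributions gives $\mu_{f,c}(B_R)=O(R^{m-1})$, uniformly for $c$ in compact subsets of $k$ (for $c=0$ under the standing hypothesis $m>d$).

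Finally, for the failure when $m\leq d$, take $f(x_1,\dots,x_m)=x_1^{d}+\cdots+x_{m-1}^{d}-x_m^{d}$ with $d\geq m$ and $\mathrm{char}\,k=0$: then $\nabla f=(dx_1^{d-1},\dots,-dx_m^{d-1})$ vanishes only at $\mathbf 0$, so the projective locus is smooth, while $(1,0,\dots,0,1)\in L_0\setminus\{\mathbf 0\}$ forces $m_1>0$, so the series above equals $m_1\sum_{n\geq0}q^{n(d-m)}=\infty$ (resp. $m_1\sum_{n\geq0}2^{n(d-m)}=\infty$); hence $\mu_{f,0}$ is infinite on every neighbourhood of $\mathbf 0$. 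The step I expect to be the main obstacle is the $c=0$ case of the third assertion: $L_0$ is genuinely singular, so Theorem~\ref{thm:1.1} cannot be applied to it directly, and the behaviour near $\mathbf 0$ must be isolated and controlled by the scaling computation of the second step — it is exactly there that the dichotomy $m>d$ versus $m\leq d$ is forced upon us.
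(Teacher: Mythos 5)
Your proof is correct, and at two points it takes a genuinely cleaner route than the paper's.

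For \eqref{eq:7.1}, the paper argues by contradiction: it extracts sequences $c_n\in W$, $x_n\in L_{c_n}$ with $\nabla f(x_n)\to 0$, normalizes by the largest coordinate, passes to a limit, and produces a nonzero singular point of the projective locus. You short-circuit this entirely by observing that the hypothesis plus Euler's identity means $\nabla f$ vanishes only at $\mathbf 0$, that the unit sphere is compact, and that $\nabla f$ is homogeneous of degree $d-1$; this yields the stronger bound $\norm{\nabla f(x)}\geq c_0\norm{x}^{d-1}$ on \emph{all} of $\{\norm{x}\geq 1\}$, not just on the level sets over $W$, with a constant independent of $W$. This is genuinely simpler, and it also immediately furnishes the two-sided estimate $a\norm{x}^{d-1}\leq\norm{\nabla f(x)}\leq b\norm{x}^{d-1}$ on $L_0\setminus\{\mathbf 0\}$ that the paper re-derives separately in the second half of its proof. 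For the finiteness of $\mu_{f,0}$ near $\mathbf 0$, the paper bounds $\norm{\nabla f(x)}^{-1}$ by $\norm{x}^{-(d-1)}$ and appeals to the convergence of $\int_{0<\norm{x}<1}\norm{x}^{-(d-1)}d^{m-1}x$; you instead establish the exact scaling law $\mu_{f,0}(\phi_\lambda E)=\abs{\lambda}^{m-d}\mu_{f,0}(E)$ and sum the resulting geometric series over annuli. The two computations are equivalent in content (the exponent in both cases detects $m-d$), but your version exhibits transparently that the dichotomy is an exact threshold and also furnishes, essentially for free, the divergence in the case $m\leq d$ whenever $L_0$ has any nonzero $k$-point: the Fermat-type form $x_1^d+\dots+x_{m-1}^d-x_m^d$ then works for every $d\geq m$, whereas the paper only records the single explicit example $X^4+Y^4-Z^4$ with a direct integral computation. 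Your handling of the third assertion is also sound; one small simplification is available there: for $c\neq 0$ the bounded piece $L_c\cap\{\norm{x}\leq 1\}$ is compact in the smooth manifold $L_c$, hence of finite canonical mass without any appeal to continuity in $c$, and that already suffices for the pointwise-in-$c$ bound $\mu_{f,c}(B_R)=O(R^{m-1})$ being asserted.
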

\begin{proof}
	To prove (\ref{eq:7.1}) assume (\ref{eq:7.1}) is not true. Then we can find sequences $c_n\in W$, $x_n\in L_{c_n}$ such that $c_n\to c\in W$, $\nabla f(x_n)\to 0$ as $n\to \infty$. By passing to a subsequence and permuting the coordinates we may assume that $x_n=(x_{n1},\ldots,x_{nm})$ where $\abs{x_{n1}}\geq \abs{x_{nj}}$ $(j\geq 2)$ and $\abs{x_{n1}}\to \infty$. Now,
	\begin{displaymath}
		f(x_{n1},\ldots,x_{nm})=x^d_{n1} f(1,x_{n1}^{-1}x_{n2},\ldots, x_{n1}^{-1}x_{nm})=c_n\to c
	\end{displaymath}
	and
	\begin{displaymath}
		(\nabla f)(x_{n1},\ldots,x_{nm})=x_{n1}^{d-1}(\nabla f)(1,x_{n1}^{-1}x_{n2},\ldots, x_{n1}^{-1}x_{nm})\to 0.
	\end{displaymath}
	Now $\abs{x_{n1}^{-1}x_{nj}}\leq 1$ for $2\leq j\leq m$ and so, passing to a subsequence, we may assume that $x_{n1}^{-1}x_{nj}\to v_j$ for $j\geq 2$. Hence,
	\begin{displaymath}
		f(1,v_2,\ldots,v_m)=0\quad\text{and}\quad (\nabla f)(1,v_2,\ldots,v_m)=0
	\end{displaymath}
	showing that $(1,v_2,\ldots,v_m)\neq(0,\ldots,0)$ is a singularity of $L_0$. Then (\ref{eq:7.1}) leads to the conclusion 
	\begin{displaymath}
		\mu_{f,c}(B_R)=O(R^{m-1})\qquad(R\to\infty)
	\end{displaymath}
	locally uniformly at each $c\neq 0$.
	
	For $\mu_{f,0}$ defined on $L_0\setminus\{0\}$, one must first show that it is finite on small neighborhoods of $0$, i.e., it extends to a Borel measure on $L_0$, if $m>d$. Let $S=\left\{ u\in L_0 |\, \norm{u}=1 \right\}$. Then there exist constants $a,b>0$ such that $a\leq\norm{\nabla f(x)}\leq b$ for all $x\in S$. Hence, by homogeneity,
	\begin{displaymath}
		a\norm{x}^{d-1}\leq \norm{\nabla f(x)}\leq b\norm{x}^{d-1}\qquad(x\in L_0\setminus\{0\}).
	\end{displaymath} 
	Hence
	\begin{displaymath}
	\norm{\nabla f(x)}\geq a > 0\qquad (x\in L_0, \norm{x}\geq1).
	\end{displaymath}
	As before, this leads to $\mu_{f,0}(B_R\setminus B_1)=O(R^{m-1})$ as $R\to \infty$. Around ${\bf 0}$ we obtain the finiteness of $\mu_{f,0}$ from the estimate $b^{-1}\norm{x}^{-(d-1)}\leq \norm{\nabla f(x)}^{-1}\leq a^{-1}\norm{x}^{-(d-1)}$ and the fact that 
	\begin{displaymath}
		\int_{x\in k^{m-1},0<\norm{x}<1} \norm{x}^{-(d-1)}d^{m-1}x < \infty
	\end{displaymath}
	if $m > d$ for both $k=\mathbb{R}$ and $k$ non-archimedean.
	We shall now suppose that $f=X^4+Y^4-Z^4$. Then ${\bf 0}$ is the only critical point. The map $(x,y,z)\mapsto (x,y)$ on $L_0\cap \left\{(x,y,z)\,|\,x>0 \right\}$ is a diffeomorphism and the measure $\mu_{f,0}$ is 
	\begin{displaymath}
		\frac{1}{\abs{\partial f/\partial z}} dxdy= \frac{1}{4}\frac{dxdy}{(x^4+y^4)^{3/4}}
	\end{displaymath}
	and it is easy to verify that 
	\begin{displaymath}
		\iint_N\frac{dxdy}{(x^4+y^4)^{3/4}}=\infty
	\end{displaymath}
	for any neighborhood $N$ of $(0,0)$.
\end{proof}

\begin{rem}
	It follows from proposition \ref{prop:7.1} that to have
	\begin{equation}
		\inf_{x\in L_c,\norm{x}\geq 1} \norm{\nabla f(x)}= 0\qquad (c\neq 0) \label{eq:7.2}
	\end{equation}
	we must look for $f$ such that $L_0$ has singular points $\neq {\bf 0}$. In the next section we describe some of these examples.
\end{rem}

\subsection{Some hypersurfaces in $\mathbb{P}_k^{m-1}$ with  $[1:0:\ldots: 0]$ as an isolated singularity}
We do not try to give a ``normal form'' for such hypersurfaces; nevertheless large families of these can be described. We work in $k^{m}$, $k$ a local field of characteristic 0. Since the first coordinate axis in $k^{m}$ is chosen to be an isolated critical line (ICL), the first variable will be distinguished in what follows. Let us write $X,Y_1,\ldots,Y_{m-1}$ as the variables. Write ${\bf Y}=(Y_1\ldots,Y_{m-1})$. Let $C(\varepsilon)=\{(X,{\bf Y}) | \norm{{\bf Y}}\leq \varepsilon \abs{X}\}$

\begin{lem}\label{lem:7.3}
	Suppose $(X_n,{\bf Y_n})$ is a sequence of points in $L_c$ $(c\neq 0)$ such that they are in $C(\varepsilon)$ for some $\varepsilon <1$. Let $F(X_n, {\bf Y}_n)=c\neq 0$ and $\nabla F(X_n,{\bf Y}_n) \to 0.$
	Then if the $X$-axis is an (ICL) for $F$, we must have $X_n\to \infty$, $\frac{1}{X_n}{\bf Y}_n\to {\bf 0}$ as $n\to\infty$.
\end{lem}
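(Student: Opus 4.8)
The plan is to run the same scaling-plus-compactness argument used to prove Proposition~\ref{prop:7.1}, exploiting that $F$ is homogeneous of degree $d\geq 3$. Recall two consequences of homogeneity: by Euler's identity every critical point of $F$ lies in $L_0$, so $L_c$ contains no critical point when $c\neq 0$; and each $\partial F/\partial x_i$ is homogeneous of degree $d-1$. I use the max norm on $k^m$; since $(X_n,\mathbf{Y}_n)\in C(\varepsilon)$ with $\varepsilon<1$, one has $\norm{(X_n,\mathbf{Y}_n)}=\abs{X_n}$.

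First I would prove $\abs{X_n}\to\infty$, i.e.\ $X_n\to\infty$. If not, pass to a subsequence with $\abs{X_n}\leq M$; then $\norm{\mathbf{Y}_n}\leq\varepsilon\abs{X_n}\leq M$, so the points $(X_n,\mathbf{Y}_n)$ lie in a compact set and, after a further subsequence, converge to some $p=(X_\ast,\mathbf{Y}_\ast)$. Continuity gives $F(p)=c$ and $\nabla F(p)=0$, so $p$ is a critical point of $F$ with $F(p)=c\neq 0$, contradicting the previous paragraph.

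Next I would prove $v_n\colonequals\tfrac{1}{X_n}\mathbf{Y}_n\to\mathbf{0}$. Since $\norm{v_n}\leq\varepsilon<1$, if $v_n\not\to\mathbf{0}$ we may pass to a subsequence with $v_n\to v_\ast$, where $v_\ast\neq\mathbf{0}$ and $\norm{v_\ast}\leq\varepsilon$. Homogeneity yields
\[
  c=F(X_n,\mathbf{Y}_n)=X_n^{d}\,F(1,v_n),\qquad \nabla F(X_n,\mathbf{Y}_n)=X_n^{d-1}\,\nabla F(1,v_n),
\]
whence $F(1,v_n)=c/X_n^{d}\to 0$ and $\norm{\nabla F(1,v_n)}=\norm{\nabla F(X_n,\mathbf{Y}_n)}/\abs{X_n}^{d-1}\to 0$ (the numerator tends to $0$ while $\abs{X_n}^{d-1}\to\infty$, using $d\geq 2$). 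In the limit, $F(1,v_\ast)=0$ and $\nabla F(1,v_\ast)=0$, so the nonzero point $(1,v_\ast)$ is a singular point of the affine cone $\set{F=0}$; hence $[1:v_\ast]$ is a singular point of the projective hypersurface $\set{F=0}\subset\mathbb{P}_k^{m-1}$. Since $v_\ast\neq\mathbf{0}$, this point differs from $[1:0:\ldots:0]$, yet it lies in the projective neighbourhood of $[1:0:\ldots:0]$ cut out by $C(\varepsilon)$; because the $X$-axis is an ICL, this is impossible once $\varepsilon$ is small enough that $C(\varepsilon)$ meets the singular locus of $\set{F=0}$ only along the $X$-axis. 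This contradiction forces $v_n\to\mathbf{0}$.

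I expect the only point needing care to be the last one: the passage from the affine cone to the projective hypersurface together with the precise use of the ICL hypothesis — namely that a nonzero critical point of $F$ lying inside $C(\varepsilon)$ and off the $X$-axis genuinely contradicts $[1:0:\ldots:0]$ being an isolated singularity of $\set{F=0}$ in $\mathbb{P}_k^{m-1}$. The rescaling identities and the two compactness passages are routine.
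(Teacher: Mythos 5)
Your proof follows the same route as the paper's: Euler's theorem to exclude critical points from $L_c$ ($c\neq 0$), compactness to force $\abs{X_n}\to\infty$, the homogeneity of $F$ and of its first partials to rescale the sequence and produce a critical point $(1,v_\ast)$ of the affine cone, and the ICL hypothesis to force $v_\ast=\mathbf{0}$. The caveat you flag at the end --- that $\varepsilon$ must be taken small enough that $C(\varepsilon)$ meets the singular locus only along the $X$-axis --- is genuinely needed and is exactly what the paper's somewhat cryptic closing phrase ``since $\varepsilon$ can be arbitrarily small'' is gesturing at.
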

\begin{proof}
	By Euler's theorem, there is no singularity on $L_c$ ($c\neq 0$). Hence $\norm{\nabla F}$ is bounded away from $0$ on each compact subset of $L_c$. Hence, item 2 above, implies $ \norm{(X_n, {\bf Y}_n)}=\abs{X_n}\to \infty.$ Then $\norm{{X_n}^{-1} {\bf Y}_n} \leq 1$ and has a limit point ${\bf \eta}$. Passing to a subsequence, if necessary, we have ${X_n}^{-1} {\bf Y}_n\to {\bf \eta}$ as $n\to \infty.$
	If $d=\deg(F)$ we have  $X_n^{d}F(1,{X_n}^{-1} {\bf Y}_n) = c$, 
	$X_n^{d-1}\partial_X F(1,{X_n}^{-1} {\bf Y}_n)\to 0,$ and  $X_n^{d-1}\partial_{Y_i} F(1,{X_n}^{-1} {\bf Y}_n)\to 0.$
	So $F(1,{\bf \eta}) =0$ and $\nabla F(1,\eta)=0,$
	while ${\bf \eta}\in C(\varepsilon).$ Hence ${\bf \eta}= {\bf 0}$ since $\varepsilon$ can be arbitrarily small.
\end{proof}
\begin{lem}
	If $(1,{\bf 0})$ is a critical point of $F$, then $F$ has the form
	\begin{displaymath}
	F=X^{d-2}p_2+ X^{d-3}p_3+\cdots + p_d
	\end{displaymath}
	where $p_r$ is a homogeneous polynomial in ${\bf Y}$ of degree $r$.
\end{lem}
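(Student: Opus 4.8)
The plan is to expand $F$ in descending powers of $X$ and use the hypothesis that $(1,\mathbf{0})$ is a critical point to kill the top coefficients. Write $F = \sum_{r=0}^{d} X^{d-r} p_r(\mathbf{Y})$, where each $p_r$ is homogeneous of degree $r$ in $\mathbf{Y}$ (this is just the decomposition of $F$ into monomials grouped by the total degree in the $\mathbf{Y}$ variables, noting that $\deg F = d$ forces $X$-degree plus $\mathbf{Y}$-degree to be at most $d$; the $X$-degree of the term with $\mathbf{Y}$-degree $r$ is exactly $d-r$ after absorbing lower-order-in-$X$ pieces into the appropriate $p_r$ — more precisely, write $F=\sum_{a,b} c_{ab} X^a m_b(\mathbf Y)$ with $m_b$ a monomial of degree $b$, and set $p_r = \sum_{a: a+ (\deg m_b) } \ldots$; the cleanest statement is simply that $F$ is a polynomial in $X,\mathbf Y$ of total degree $d$, so it is uniquely a sum $\sum_{r\ge 0} q_r(X,\mathbf Y)$ where $q_r$ is the part homogeneous of degree $r$ in $\mathbf Y$, and then $q_r = X^{d-r}p_r + (\text{lower order in }X)$; but for the normal form we only need the leading behaviour). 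So first I would set up this bookkeeping carefully and observe $p_0 = F(X,\mathbf 0)$ (a polynomial in $X$ alone of degree $\le d$) and $p_1$ is the coefficient of the linear-in-$\mathbf Y$ part.

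Next I would impose the two conditions coming from $(1,\mathbf 0)\in C(F)$: namely $F(1,\mathbf 0)=0$ and $\nabla F(1,\mathbf 0)=\mathbf 0$. The first gives $p_0(1)=0$. The derivative in the $X$-direction at $(1,\mathbf 0)$ involves only $\partial_X F(X,\mathbf 0) = p_0'(X)$ evaluated at $X=1$, so $p_0'(1)=0$. The derivatives in the $Y_i$-directions at $(1,\mathbf 0)$ pick out exactly the linear-in-$\mathbf Y$ coefficients evaluated at $X=1$, i.e. the coefficients of $p_1$ (after the same leading-order identification). Hence the vanishing of $\nabla_{\mathbf Y} F(1,\mathbf 0)$ forces $p_1 \equiv 0$. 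Finally, since the $X$-axis is supposed to be an \emph{isolated} critical line (equivalently, $[1:0:\cdots:0]$ is an isolated singularity of the projective hypersurface), $F$ restricted to the $X$-axis must vanish identically — otherwise generic points of that axis would be smooth points, or the axis would not lie in $Z(F)$ at all in the relevant sense; in the homogeneous/weighted setup used in \S 7.2 the condition that the first coordinate axis is a critical \emph{line} means $F(X,\mathbf 0)\equiv 0$ for all $X$, i.e. $p_0 \equiv 0$. (If instead one only assumes $(1,\mathbf 0)$ is a critical point, one should still argue, using Euler's theorem as in Lemma \ref{lem:7.3}, that the whole axis is critical; but the cleaner route is to note $p_0$ is a one-variable polynomial of degree $\le d$ with $p_0(1)=p_0'(1)=0$, and the $X$-axis being an ICL upgrades this to $p_0\equiv 0$.)

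Combining $p_0\equiv 0$ and $p_1\equiv 0$ leaves $F = \sum_{r=2}^{d} X^{d-r}p_r(\mathbf Y) = X^{d-2}p_2 + X^{d-3}p_3 + \cdots + p_d$, which is exactly the claimed form. So the proof is: (i) expand and justify the $\sum X^{d-r}p_r$ decomposition; (ii) translate $F(1,\mathbf 0)=0$ and $\nabla F(1,\mathbf 0)=0$ into $p_0(1)=p_0'(1)=0$ and $p_1=0$; (iii) use the ICL hypothesis (or an Euler-type argument) to get $p_0\equiv 0$; (iv) conclude.

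I expect the only real subtlety to be step (i)/(iii): making precise in what sense ``$p_r$ is homogeneous of degree $r$ in $\mathbf Y$'' forces the $X$-exponent to be exactly $d-r$ (it does, once one insists $\deg F = d$ and reads off the term of top total degree, but one must be careful that lower-$X$-degree contributions with the same $\mathbf Y$-degree are genuinely absorbed — the statement of the lemma is really about the leading form, and the honest reading is that $F$ \emph{can be written} in that shape, not that every monomial has $X$-exponent plus $\mathbf Y$-degree equal to $d$). Everything else is a direct and routine computation of partial derivatives at a single point.
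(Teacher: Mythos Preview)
Your core idea matches the paper's: expand $F$ by $\mathbf{Y}$-degree, then use $F(1,\mathbf{0})=0$ to kill $p_0$ and $\partial_{Y_i}F(1,\mathbf{0})=0$ to kill $p_1$. That is exactly what the paper does, in two lines.

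The gap is that you have missed the standing hypothesis of \S7.2: $F$ is \emph{homogeneous} of degree $d$ (we are describing hypersurfaces in $\mathbb{P}_k^{m-1}$). Once you use this, both of your ``subtleties'' evaporate. For step (i), homogeneity forces every monomial to satisfy (\text{$X$-degree}) + (\text{$\mathbf{Y}$-degree}) $= d$, so the decomposition $F=\sum_{r=0}^{d} X^{d-r}p_r(\mathbf{Y})$ with each $p_r$ homogeneous of degree $r$ is immediate and exact --- there are no ``lower-order-in-$X$ pieces'' to absorb. In particular $p_0$ is a \emph{constant}, not a polynomial in $X$.

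This also fixes step (iii): since $p_0$ is constant, $F(1,\mathbf{0})=p_0=0$ already gives $p_0\equiv 0$. (Alternatively, $\partial_X F(1,\mathbf{0})=d\,p_0=0$ and $\mathrm{char}\,k=0$.) You do \emph{not} need the ICL hypothesis, and indeed the lemma does not assume it --- that hypothesis enters only in the next lemma. Invoking it here is both unnecessary and, strictly speaking, illegitimate. Drop your hedging about ``leading forms'' and the ICL appeal; with homogeneity in hand, the proof is the two-line computation the paper gives.
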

\begin{proof}
	Write $F=X^{d-2}p_2+ X^{d-3}p_3+\cdots + p_d.$
	Then $p_0$ is a constant, and $F(1,{\bf 0})={\bf 0}$ gives $p_0=0.$ Then, $\frac{\partial F}{\partial Y_i}(1,{\bf 0})=0$ gives $p_1=0.$
\end{proof}
	
	From now on we let $d\geq 3$ and write
	$$F=X^{d-2}p_2 +\cdots+p_d, \quad G=p_2+\cdots +p_d.$$
	Note that $G$ is a polynomial in ${\bf Y}$, but \emph{not necessarily} homogeneous.

\begin{lem}\label{lem:ex3}
	If ${\bf 0}$ is an isolated critical point (ICP) of $G$, then the $X$-axis is an ICL of $F.$ In particular, this is so if the quadratic form $p_2$ is non-degenerate.
\end{lem}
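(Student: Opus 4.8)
The plan is to show that if $\mathbf{0}$ is an isolated critical point of $G = p_2 + \cdots + p_d$, viewed as a polynomial map on the $\mathbf{Y}$-space $k^{m-1}$, then the only critical point of $F$ lying on the $X$-axis is $(1,\mathbf{0})$ (up to scaling), i.e.\ the $X$-axis is an ICL. First I would compute the partial derivatives of $F = X^{d-2}p_2 + X^{d-3}p_3 + \cdots + p_d$ at a point $(X,\mathbf{0})$ on the $X$-axis. Since each $p_r$ is homogeneous of degree $r \geq 2$ in $\mathbf{Y}$, we have $p_r(\mathbf{0}) = 0$ and $\partial_X F(X,\mathbf{0}) = 0$ automatically for every $X$; the only condition that can fail is $\partial_{Y_i} F(X,\mathbf{0}) = 0$. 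Now $\partial_{Y_i} F(X,\mathbf{Y}) = X^{d-2}\partial_{Y_i}p_2(\mathbf{Y}) + X^{d-3}\partial_{Y_i}p_3(\mathbf{Y}) + \cdots + \partial_{Y_i}p_d(\mathbf{Y})$, and because $\partial_{Y_i}p_r$ is homogeneous of degree $r-1 \geq 1$, each term vanishes at $\mathbf{Y} = \mathbf{0}$. Hence \emph{every} point of the $X$-axis is a critical point of $F$ — wait, this shows the whole axis is critical, which is consistent with it being an ICL (a critical \emph{line}); the content of ``ICL'' is that no critical point of $F$ lies off the axis but arbitrarily near it, equivalently that in suitable projective/affine coordinates the axis is an isolated component of the critical locus.

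So the real claim to establish is: there is a neighborhood of the $X$-axis (say inside a cone $C(\varepsilon)$, after the usual dehomogenization $X = 1$) containing no critical points of $F$ other than those on the axis itself. Setting $X = 1$ reduces $F$ to $G = p_2 + \cdots + p_d$ on the $\mathbf{Y}$-space, and the critical points of $F$ with $X \neq 0$ near the axis correspond, under the scaling $\mathbf{Y} \mapsto \mathbf{Y}/X$, to critical points of $G$ near $\mathbf{0}$. More precisely, for $X \neq 0$ one has $\partial_{Y_i}F(X,\mathbf{Y}) = X^{d-2}(\partial_{Y_i}G)(\mathbf{Y}/X)\cdot(\text{homogeneity bookkeeping})$; carrying out the substitution $\mathbf{Z} = \mathbf{Y}/X$ and using homogeneity of each $\partial_{Y_i}p_r$ shows $\nabla_{\mathbf{Y}}F(X,\mathbf{Y})$ vanishes iff $\nabla G(\mathbf{Z})$ vanishes. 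Since $\mathbf{0}$ is an isolated zero of $\nabla G$, the only such $\mathbf{Z}$ near $\mathbf{0}$ is $\mathbf{Z} = \mathbf{0}$, i.e.\ $\mathbf{Y} = \mathbf{0}$, which puts us back on the axis. This is exactly the bookkeeping that makes Lemma~\ref{lem:7.3} applicable, and I would phrase the conclusion in those terms.

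For the final sentence — that a non-degenerate quadratic form $p_2$ forces $\mathbf{0}$ to be an ICP of $G$ — I would note that $\nabla G(\mathbf{Y}) = \nabla p_2(\mathbf{Y}) + \nabla p_3(\mathbf{Y}) + \cdots$, where $\nabla p_2$ is \emph{linear} and invertible (non-degeneracy of $p_2$) while the remaining terms $\nabla p_r$, $r \geq 3$, vanish to order $\geq 2$ at $\mathbf{0}$. Thus $\nabla G(\mathbf{Y}) = A\mathbf{Y} + o(\|\mathbf{Y}\|)$ with $A$ invertible, so $\nabla G(\mathbf{Y}) \neq 0$ for $\mathbf{Y}$ small and nonzero; hence $\mathbf{0}$ is isolated in $Z(\nabla G)$. (Over a non-archimedean $k$ this estimate is just the ultrametric statement that the linear term dominates on a small enough ball, which one gets directly from the power-series/polynomial expansion.) The main obstacle, such as it is, is purely notational: keeping the homogeneity degrees straight through the $\mathbf{Y} \leftrightarrow \mathbf{Y}/X$ rescaling so that ``critical point of $F$ near the axis'' is cleanly matched with ``critical point of $G$ near $\mathbf{0}$'' — there is no deep difficulty, and the characteristic-zero hypothesis is what guarantees, via Euler's relation already invoked, that these are genuinely the critical points and not artifacts.
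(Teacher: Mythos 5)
Your proof of the first statement (ICP of $G$ $\Rightarrow$ ICL of $F$) is essentially the paper's argument: work in the chart $X=1$, observe that the $\mathbf{Y}$-partials of $F$ become the partials of $G$ (up to the factor $X^{d-1}$ — not $X^{d-2}$, but the exponent is immaterial once $X \neq 0$), and invoke the hypothesis that $\mathbf{0}$ is isolated in $Z(\nabla G)$. The paper even adds $G(\mathbf{Y}_n)=0$ as a second condition, but as you correctly observe, the hypothesis is only about $\nabla G$, so that extra condition is not needed; your version is slightly cleaner on this point. The closing remark about Euler's identity is a red herring for this lemma — Euler is used in Lemma~\ref{lem:7.3}, not here — but it does no harm.

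For the second statement you genuinely diverge from the paper, which cites Morse's lemma (in Duistermaat's formulation, carried over to the non-archimedean case) to reduce $G$ locally to $p_2$. You instead give the direct, more elementary estimate: $\nabla G(\mathbf{Y}) = A\mathbf{Y} + O(\lVert\mathbf{Y}\rVert^2)$ with $A$ invertible (here one implicitly uses $\mathrm{char}\,k = 0 \neq 2$ so that $\nabla p_2$ is the invertible linear map associated to the nondegenerate form), so for $\mathbf{Y}$ small and nonzero the linear term dominates — by the triangle inequality over $\mathbb{R}$, and even more cleanly by the ultrametric inequality over non-archimedean $k$ — hence $\nabla G(\mathbf{Y}) \neq 0$. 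This avoids invoking the full Morse lemma when only its opening step is required, and is self-contained and arguably preferable; the conclusion is the same.
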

\begin{proof}
	We must prove that if $(1,{\bf Y}_n)$ is a CP for $F$ with ${\bf Y}_n\to {\bf 0},$ then ${\bf Y}_n=0$ for $n\geq 1$. The conditions for $(1,{\bf Y}_n)$ to be a CP of $F$ are
	\begin{align*}
	F(1,{\bf Y}_n)=0, \quad \frac{\partial}{\partial X}F(1,{\bf Y}_n) =0, \quad \frac{\partial}{\partial {\bf Y}_i} F(1,{\bf Y}_n)=0 \ \text{for all } i.
	\end{align*}
	Consequently $G({\bf Y}_n)=0$ and $\frac{\partial G}{\partial {\bf Y}_i}({\bf Y}_n)=0 $ for all $i.$
	Since ${\bf Y}_n\to {\bf 0}$ and ${\bf 0}$ is and ICP for $G$, $ {\bf Y}_n=0$ for all $n\gg 1$.
	
	For the second statement, suppose $p_2$ is non-degenerate By Morse's lemma \cite{D73} for local fields $k$, ch.$k=0$\footnote{Duistermaat's proof goes through to the non-archimedean case without any change.}, there is a local diffeomorphism of $k^{m-1}$ fixing ${\bf 0}$ taking $G$ to $p_2$. But ${\bf 0}$ is an isolated CP for $p_2$, which makes it isolated for $G$. 
\end{proof}
\begin{lem} 
	The converse to the first statement of Lemma \ref{lem:ex3} is true if $F=X^{d-r}p_r+p_d$ ($r\geq 2$). 
\end{lem}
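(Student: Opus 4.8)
We must prove the converse of the first assertion of Lemma~\ref{lem:ex3}: if $F=X^{d-r}p_r+p_d$, with $p_r$ (resp. $p_d$) homogeneous of degree $r$ (resp. $d$) in ${\bf Y}$ and $2\le r<d$, and $G=p_r+p_d$, and if the $X$-axis is an isolated critical line of $F$, then ${\bf 0}$ is an isolated critical point of $G$ --- that is, an isolated point of the singular locus $\{\,{\bf Y}:G({\bf Y})=0,\ \nabla_{\bf Y}G({\bf Y})=0\,\}$ of $Z(G)$, which is the reading of ``ICP'' used in the proof of Lemma~\ref{lem:ex3}. The plan is to argue by contraposition: starting from a sequence of singular points of $Z(G)$ accumulating at ${\bf 0}$, I will produce critical points of $F$ accumulating at the $X$-axis point $(1,{\bf 0})$ but lying off the $X$-axis, contradicting the hypothesis.

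So assume ${\bf 0}$ is not isolated in $\{\,{\bf Y}:G({\bf Y})=0,\ \nabla_{\bf Y}G({\bf Y})=0\,\}$ and pick ${\bf Y}_n\to{\bf 0}$ with ${\bf Y}_n\ne{\bf 0}$, $G({\bf Y}_n)=0$ and $\partial_{Y_i}G({\bf Y}_n)=0$ for all $i$ and $n$. The key step is a short application of Euler's identity to the homogeneous pieces: since $\sum_i Y_i\,\partial_{Y_i}G=r\,p_r+d\,p_d$, evaluating at ${\bf Y}_n$ and using $\nabla_{\bf Y}G({\bf Y}_n)=0$ gives $r\,p_r({\bf Y}_n)+d\,p_d({\bf Y}_n)=0$, while $G({\bf Y}_n)=0$ gives $p_r({\bf Y}_n)+p_d({\bf Y}_n)=0$. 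Because $\mathrm{char}\,k=0$ and $r\ne d$, this $2\times 2$ linear system forces $p_r({\bf Y}_n)=p_d({\bf Y}_n)=0$.

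It then remains only to verify the three conditions for $(1,{\bf Y}_n)$ to be a critical point of $F$: $F(1,{\bf Y}_n)=p_r({\bf Y}_n)+p_d({\bf Y}_n)=0$; $\partial_X F(1,{\bf Y}_n)=(d-r)\,p_r({\bf Y}_n)=0$; and $\partial_{Y_i}F(1,{\bf Y}_n)=\partial_{Y_i}p_r({\bf Y}_n)+\partial_{Y_i}p_d({\bf Y}_n)=\partial_{Y_i}G({\bf Y}_n)=0$ for every $i$. Thus $(1,{\bf Y}_n)$ is a critical point of $F$; it does not lie on the $X$-axis since ${\bf Y}_n\ne{\bf 0}$, and $(1,{\bf Y}_n)\to(1,{\bf 0})$. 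Hence the $X$-axis is not an isolated critical line of $F$, which is the required contradiction and completes the proof.

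I do not anticipate any genuine obstacle: the argument is a single use of Euler's identity followed by a routine check. The one place the special form $F=X^{d-r}p_r+p_d$ is essential is that step --- it is because the only homogeneous components present are $p_r$ and $p_d$ that the two relations $r\,p_r+d\,p_d=0$ and $p_r+p_d=0$ at ${\bf Y}_n$ determine $p_r({\bf Y}_n)$, which is exactly what forces $\partial_X F(1,{\bf Y}_n)=0$ and thereby upgrades ${\bf Y}_n$ to a critical point of $F$ near the $X$-axis.
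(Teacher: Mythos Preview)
Your proof is correct and is essentially identical to the paper's: both take a sequence of critical points of $G$ tending to ${\bf 0}$, use Euler's identity on the two homogeneous pieces $p_r,p_d$ together with $G=0$ to force $p_r=p_d=0$ along the sequence, and then check that $(1,{\bf Y}_n)$ are critical points of $F$ near $(1,{\bf 0})$. The only cosmetic difference is that you phrase it as a contraposition with ${\bf Y}_n\ne{\bf 0}$, while the paper phrases it directly and concludes ${\bf w}_n={\bf 0}$ for $n\gg 1$.
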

\begin{proof}
	We must show that $G=p_r+p_d$ has ${\bf 0}$ as an ICP if $(1,{\bf 0})$ is an ICP for $F$. Suppose ${\bf w}_n$ are CP's for $G=p_r+p_d$ with ${\bf w}_n\to {\bf 0}.$ Then, $G({w}_n)=F(1,{\bf w}_n)=0$  for all $n$, and $G_{i}({\bf w}_n)=\frac{\partial F}{\partial {Y}_i}(1,{\bf w}_n)=0$ for all $n.$
	Hence, $p_{r,i}({\bf w}_n)+p_{d,i}({\bf w}_n)=0 \text{ for all } n.$
	By Euler's theorem, $rp_r({\bf w}_n)+dp_d({\bf w}_n) =0 $ for all $n$. But, $p_r({\bf w}_n)+p_d({\bf w}_n)=0$ for all $n$ as well. So, $p_r({\bf w}_n)=p_d({\bf w}_n)=0$ for all $n$. Hence, $\frac{\partial F}{\partial X}(1,{\bf w}_n)= (d-r)p_r({\bf w}_n)=0 \text{ for all } n.$
	So $(1,{\bf w}_n)$ is a CP of $F$ for all $n$. As $(1, {\bf 0}) $ is assumed to be an ICP for $F$, ${\bf w}_n={\bf 0}$ for $n \gg 1$. So ${\bf 0}$ is an ICP for $F$.
\end{proof}
\subsection{Study of condition (\ref{eq:7.2}) for $F=X^{d-2}p_2+p_d$ where $G=p_2+p_d$ has ${\bf 0}$ as an ICP}
Let us consider $F=X^2+P_4(Y)$ where $P_4$ is a homogeneous quartic polynomial in $Y,Z$.
For this to have $(t,0,0)$ as and ICL we must have $(0,0)$ as an ICP for $G=Z^2+P_4(Y,Z)$.
\begin{lem}
	$G=Z^2+P_4(Y,Z)$ has ${\bf 0}$ as an ICP if and only if $Z^2 \not | P_4(Y,Z)$, i.e.,
	\begin{displaymath}
	P_4(Y,Z)= a_0 Y^4 + a_1 Y^3 Z + a_2 Y^2 Z^2 +a_3 Y Z^3 + a_4 Z^4
	\end{displaymath}
	where at least one of $a_0, a_1$ is nonzero. In this case ${\bf 0}$ is its only CP.
\end{lem}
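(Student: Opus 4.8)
The plan is to reduce the whole question, via Euler's identity, to an elementary computation on the $Y$-axis. Write $P_4(Y,Z)=a_0Y^4+a_1Y^3Z+a_2Y^2Z^2+a_3YZ^3+a_4Z^4$, and recall that a critical point of $G$ means a point of $Z(G)$ at which $\nabla G$ vanishes, i.e.\ a singular point of the hypersurface $\{G=0\}$ (note that ${\bf 0}$ is always such a point, so ``ICP of $G$'' is a meaningful condition). Since $\mathrm{char}\,k=0$, Euler's theorem applied to the homogeneous quartic $P_4$ gives $Y\,\partial_YP_4+Z\,\partial_ZP_4=4P_4$, and since $\partial_YG=\partial_YP_4$ while $\partial_ZG=2Z+\partial_ZP_4$, this yields the identity
$$
Y\,\partial_YG+Z\,\partial_ZG \;=\; 4P_4+2Z^2 \;=\; 4G-2Z^2 .
$$
Hence at any critical point $(Y,Z)$ of $G$ one has $0 = 4\cdot 0 - 2Z^2$, so $Z=0$: \emph{every} critical point of $G$ lies on the $Y$-axis.

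Next I would simply read off the behaviour of $G$ along $\{Z=0\}$: there $G(Y,0)=a_0Y^4$, $\partial_YG(Y,0)=4a_0Y^3$, and $\partial_ZG(Y,0)=a_1Y^3$. Thus $(Y,0)$ is a critical point of $G$ precisely when $a_0Y^4=a_0Y^3=a_1Y^3=0$. If at least one of $a_0,a_1$ is nonzero --- equivalently $Z^2\nmid P_4$ --- these equations force $Y=0$, so ${\bf 0}$ is the only critical point of $G$, and in particular it is an isolated one. If $a_0=a_1=0$ --- equivalently $Z^2\mid P_4$ --- then along $\{Z=0\}$ we have $G=0$, $\partial_YG=4a_0Y^3=0$, and $\partial_ZG=a_1Y^3=0$, so the entire line $\{Z=0\}$ consists of critical points of $G$ and ${\bf 0}$ is not isolated. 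Combining the two cases gives the stated equivalence, and the computation in the first case gives the final assertion that ${\bf 0}$ is then the only critical point.

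The argument is almost entirely mechanical; the only step carrying any content is the Euler identity above, which is what collapses the critical locus onto a single line, after which only the restrictions of $G$, $\partial_YG$, $\partial_ZG$ to that line --- governed exactly by the coefficients $a_0$ and $a_1$ --- need to be inspected. (Without it one would instead solve $\partial_ZG=0$ near the origin for $Z=-\tfrac{a_1}{2}Y^3+O(Y^4)$ by the implicit function theorem and substitute into $\partial_YG$; this also works but is less transparent.)
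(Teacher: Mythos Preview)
Your proof is correct and follows essentially the same route as the paper: both use Euler's identity for the homogeneous quartic $P_4$ to force $Z=0$ at any critical point, then read off the conditions on $a_0,a_1$ from the restrictions to the $Y$-axis. Your packaging of the Euler step as the single identity $Y\,\partial_YG+Z\,\partial_ZG=4G-2Z^2$ is slightly cleaner than the paper's version, which derives $2z^2+4P_4=0$ from the gradient equations and then combines it with $G=0$, but the content is identical.
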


\begin{proof}
	The equations which determine whether $(y,z)$ is a CP of $G$ are $z^2+P_4(y,z)=0,$ $\frac{\partial P_4}{\partial Y}(y,z)=0,$ and $2Z+\frac{\partial P_4}{\partial Z}(y,z)=0.$ 
	
	From the second and third equations just defined, using Euler's theorem, $2z^2+4P_4(y,z)=0,$
	which implies $z^2=0$ and $P_4(yz)=0$.
	
	So the only critical points are of the form $(y,0).$ Then $(0,0)$ is certainly a CP. If $(y,0)$ is a critical point for some $y\neq 0$, then $4a_0y^3=0, \ a_1y^3=0$ which implies $a_0, a_1$ both vanish.
	The entire $Y$-axis consists of critical points, and so for $(0,0)$ to be an ICP, at least one of $a_0, a_1 \neq 0.$, in which case $(0,0)$ is the only CP.
	
	We consider the cases ${\rm (I)} \ a_0\neq 0$ and ${\rm (II)} \ a_0=0, \ a_1 \neq 0.$
	We consider case (I). We shall now verify that $\inf_{\norm{\bf u}>1}\norm{\nabla F({\bf u})} > 0$ if ${\bf u}\in L_c, \norm{{\bf u}}\geq 1$. Assume $F=X^2 Z^2 +P_4(Y,Z)$, and in view of lemma \ref{lem:7.3}, choose a sequence $(x_n,y_n,z_n)$ such that $x_n\to \infty, \ y_n/x_n \to 0, \ z_n/x_n \to 0$ and: (i) $ x_n^2 y_n^2 +P_4(y_n,z_n) = c$, (ii) $\frac{\partial F}{\partial X}= 2x_n z_n^2 \to 0$, (iii) $\frac{\partial P_4}{\partial Y}(y_n,z_n) \to 0$,  (iv) $2x^2_n z_n + \frac{\partial P_4}{\partial Z}(y_n,z_n)\to 0.$
	
	From (ii) we get $z_n\to 0$.
	Assuming we are in case (I), {$y_n$ is bounded.} Otherwise by passing to a subsequence we may assume, $y_n\to \infty$ giving ${\partial P_4}/{\partial Y}(y_n,z_n)= 4a_0y^3_n +3a_1y_n^2z_n + \ldots\to 0.$ 
	If $a_0\neq 0$, then $\frac{\partial P_4}{\partial Y}(y_n,z_n)= 4a_0y_n^3(1+o(z_n/y_n))\to \infty,$ which is a contradiction. But if $\eta\neq 0$ is a limit point of $y_n$, then
	\begin{displaymath}
	\frac{\partial P_4}{\partial Y}(z_n,y_n) \to 4 a_0 \eta^3 \neq 0
	\end{displaymath}
	which is a contradiction. So, $y_n\to 0$ necessarily. Then, $P_4(y_n,z_n)\to 0$ and $\frac{\partial P_4}{\partial Z}(y_n,z_n)\to 0 $. Hence by (iv), $x^2_n z_n \to 0,$ by (i) $x_n^2 z_n^2 \to c\neq 0,$ a contradiction. This finishes case (I).
	
	Assuming we are in case (II), $a_0=0, \ a_1\neq 0,$ we claim $y_n\to \infty.$ Otherwise, by passing to a subsequence, we may assume $y_n\to \eta.$ Then $P_4(y_n,z_n) = a_1y^3_n z_n+\ldots$ so that $P_4(y_n, z_n)\to 0.$
	Hence, $x_n^2 z_n^2\to c.$
	But $\frac{\partial P_4}{\partial Z}(y_n,z_n)=a_1y_n^3+\ldots \to a_1\eta^3.$ Hence, by (iv), $x_n^2 z_n = o(1).$ So, as $z_n\to 0,$ we have $x_n^2 y_n^2\to 0$. Hence, $c=0$ is a contradiction.
	
	We are left with the case, $x_n\to \infty,$ $y_n \to \infty,$ $z_n \to 0,$ $\frac{y_n}{x_n} \frac{z_n}{x_n}\to 0,$ and $P_4(Y,Z)=a_1Y^3 Z + \ldots,$ for $a_1\neq 0.$
	But ${\partial P_4}/{\partial Y}(y_n,z_n)= 3 a_1 y^2_n z_n(1+o({z_n}/{y_n}))\to 0$
	if and only if $y^2_n z_n\to 0.$ In this case may we have a counterexample to statement (\ref{eq:7.2}). Remark \ref{rem:4.15} gives an example of this kind. Note that case (I) is generic among the families we consider.
\end{proof}
\bibliography{measures_bib}
\bibliographystyle{alpha}

\end{document}